\theoremstyle{plain}
\newtheorem{theorem}{Theorem}[section] 
\newtheorem{lemma}[theorem]{Lemma}
\newtheorem{proposition}[theorem]{Proposition}
\newtheorem{corollary}[theorem]{Corollary}
\theoremstyle{definition}
\newtheorem{definition}[theorem]{Definition}
\theoremstyle{remark}
\newtheorem{property}[theorem]{Property}
\newtheorem{assumption}[theorem]{Assumption}
\newtheorem{example}[theorem]{Example}
\crefname{theorem}{Theorem}{Theorems}
\crefname{example}{Example}{Examples}
\crefname{lemma}{Lemma}{Lemmata}
\crefname{proposition}{Proposition}{Propositions}
\crefname{corollary}{Corollary}{Corollaries}
\crefname{equation}{}{}
\crefname{property}{Property}{Properties}
\crefname{assumption}{Assumption}{Assumptions}
\crefname{definition}{Definition}{Definitions}
\newcommand{\ed}{\mathrm{d}}
\newcommand{\ol}{\overline}
\begin{document}

\title{Curved Schemes for SDEs on Manifolds}

\author{
John Armstrong and Tim King}
\date{}
\maketitle

\begin{abstract}
  Given a stochastic differential equation (SDE) in $\mathbb{R}^n$ whose solution is constrained to lie in some manifold $M \subset \mathbb{R}^n$, we propose a class of numerical schemes for the SDE whose iterates remain close to $M$ to high order. Our schemes are geometrically invariant, and can be chosen to give perfect solutions for any SDE which is diffeomorphic to $n$-dimensional 
  Brownian motion. Unlike projection-based methods, our schemes may be  implemented without explicit knowledge of M. Our approach does not require simulating any iterated It\^{o} interals beyond those needed to implement the Euler--Maryuama scheme. We prove that the schemes converge under a standard set of assumptions, and illustrate their practical advantages by considering a stochastic version of the Kepler problem.
\end{abstract}

\numberwithin{equation}{section}
\section{Introduction}
When studying the dynamics of a complex system, there are often constraints of a geometric nature. For example, to say that a particle moving in phase space has constant energy is to specify a manifold on which the particle must lie. Other examples arise in control theory, for instance the movement of a robot arm, constrained to have constant length. Therefore, if applying a numerical method to predict the movement of such a system, it seems sensible to choose a method which respects the underlying geometry.

We are interested in modelling systems subject to the influence of random noise. Specifically, we study stochastic differential equations (SDEs) in $\mathbb{R}^n$ driven by $k$-dimensional Brownian motion. We assume that there is some manifold
$M \subset \mathbb{R}^n$ to which the solution is confined, and aim to find a numerical scheme for this SDE which remains close to $M$ for long periods of time.

One way to do this, given some numerical scheme, is to successively project each of its iterates onto $M$ before finding the next iterate. This is explored for ODEs in \cite{hairer2006geometric} and for SDEs in \cite{averina2019modification}. In the latter case, the authors show that for a large class of schemes, introducing the projection step does not adversely affect the local order of the scheme. However, to compute the projection requires finding the intersection of $M$ with a straight line passing through the iterate, and this in general means solving a nonlinear
system of equations, which might not have a unique (or any) solution. More fundamentally, it requires knowing the equation of $M$ in the first place.

We propose a class of numerical methods, called \textit{jet schemes}, which use differential geometric flow, rather than projection. Our method is independent of the coordinates in which the SDE is expressed, in a sense we make precise in \Cref{sec:invariance}. As a result, implementing the method does not require knowing the equation of $M$. In addition, the coordinate invariance means that the jet schemes should perform well on problems that, upon making a judicious choice of coordinates, take a simple form, such as Brownian motion or additive noise. By continuity of geometric flow, we expect good performance on problems which are small perturbations of this type, and further it is possible to use the schemes if the solution trajectories are merely concentrated near $M$, as opposed to being strictly confined to it. The judicious choice of coordinates need not be known explicitly, and so the jet schemes may be able to detect conserved or nearly-conserved quantities, even if they are not known a priori.

The most well known scheme for SDEs is the Euler--Maruyama 
scheme. This scheme works by following short straight line segments, and hence leaves $M$ rather quickly if $M$ is curved. It would be better to follow short curved segments in $M$ instead, which motivates us to look at geometric flows on $M$. This requires solving an ODE, but there are a wealth of high order methods to do this, many of which are not significantly more expensive than the Euler scheme.

By contrast, higher order schemes for SDEs, such as the Milstein scheme, require  (see \cite{rumelin1982numerical}) the simulation of iterated It\^{o} integrals of the form
\[
\int_0^{t_1} \int_0^{t_2} \ldots \int_0^{t_n} \ed W^{\alpha_1}_{s} \ed W^{\alpha_2}_{t_{n}} \ldots \ed W^{\alpha_n}_{t_{2}} 
\qquad \alpha_1, \alpha_2,\ldots, \alpha_n \in \{1, \ldots k\}.
\]
In more than one dimension this is nontrivial to do, and adds an extra layer of complexity. Although methods have been developed for doing so (e.g using Fourier series
\cite{kuznetsov2017expansion} or Hermite polynomials
\cite{kloeden2013numerical}), there is still no geometric reason to suppose that the solution will remain close to $M$, beyond the fact that the numerical approximation converges to the true solution. Jet schemes do not require the simulation of such integrals.

The idea of using geometry to inform the development of numerical methods is somewhat recent; according to
\cite{iserles2018geometric}, `the importance of [geometric numerical integration] has been recognised and its scope delineated only in the 1990s.' A substantial part of this
work centred on systems that have a \textit{Hamiltonian} structure. So-called symplectic ODE methods have been developed  which preserve the quantities which are naturally conserved in Hamiltonian systems \cite{hairer2006geometric}. They have subsequently been extended to Hamiltonian SDE systems \cite{milstein2002numerical}
\cite{milstein2002symplectic}.
More akin to our work is methods for differential equations on Lie groups $M$. Here, one translates an equation on $M$ to a corresponding equation on the Lie algebra $\mathcal{M}$, a linear space, and uses a kind of flow - the exponential map - to get back to $M$. These ideas have since been used to modify ODE methods so that they stay in $M$. The authors in
\cite{malham2008stochastic} play a similar game for SDEs. One 
technique used in \cite{malham2008stochastic} is an expansion involving successive iterations of the commutator bracket, called the \textit{Magnus expansion}. Use of the Magnus expansion can give methods which are superior to those using the stochastic Taylor expansion \cite{lord2008efficient}. Moreover, by choosing when to truncate the expansion, one can obtain methods of strong order greater than $\frac{1}{2}$. Our scheme (in its current form) does not have these advantages, but it works on any smooth manifold, not just those that have Lie group structure.

The paper is organized as follows. Our model and main results are in \Cref{sec:modelandmainresult}. \Cref{sec:howtochoosegamma} discusses when the assumptions of the model hold. \Cref{sec:invariance} discusses the geometric invariance of our scheme, and \Cref{sec:examples} applies the scheme to a stochastic version of the Kepler problem. \Cref{sec:workhorse}
contains a general result on the convergence of numerical schemes, and \Cref{sec:strongconvergence,sec:weakconvergence} apply this general result to prove our main result in the strong and weak senses respectively. The appendix contains some of the longer or less interesting proofs, included for completeness.

\section{Model and Main Result}
\label{sec:modelandmainresult}
We aim to simulate the process $X=(X_t)_{t \in [0,T]}$ in $\mathbb{R}^d$ given by the It\^{o} SDE 
\begin{equation}
\ed X_t = a(X_t,t) \, \ed t + \sum_{i=1}^k b_\alpha(X_t,t) \, \ed W_t^\alpha
\label{eq:sdeIto}
\end{equation}
where $[0,T]$ is the time interval, the $(W_t^\alpha)_{\alpha=1,2,\dots,k}$ are a collection of $k$ independent (one dimensional) Brownian motions, $a(x,t):\mathbb{R}^n \times [0,T] \to \mathbb{R}^n$ and $b_{\alpha}(x,t):\mathbb{R}^n \times [0,T] \to \mathbb{R}^n$ for each $\alpha$. Our initial condition is $X_0=x_0$ for some fixed $x_0 \in \mathbb{R}^n$. The data above may be equivalently expressed in Stratonovich form as
\begin{equation}
\ed X_t = \ol{a}(X_t,t) \, \ed t + \sum_{i=1}^k b_\alpha(X_t,t) \, \circ \ed W_t^\alpha
\label{eq:sdeStratonovich}
\end{equation}
where
\begin{equation}
\ol{a}(x,t):=a(x,t) - 
\frac{1}{2} \sum_{\alpha=1}^{k}\sum_{j=1}^n b_\alpha(x,t)^j
\frac{\partial b_\alpha}{\partial x^j}.
\end{equation}
We make the following assumptions on the SDE. 
\begin{assumption}
	$a(x,t)$ and $b(x,t)$ are Lebesgue measurable.
	\label{assumptionMeasurable}
\end{assumption}
\begin{assumption}
	$a$ and $b$ are uniformly Lipschitz in $x$. That is, there exists a constant $K>0$ such that
	\[
	|a(x,t)-a(y,t)|<K|x-y| \text{ and } |b(x,t)-b(y,t)|<K|x-y|
	\]
	for all $x,y \in \mathbb{R}^d,t \in [0,T]$.
	\label{assumptionLipschitz}
\end{assumption}
\begin{assumption}
	There is a manifold $M \subset \mathbb{R}^n$, containing $x_0$, such that for all $x \in M$, $\ol{a}(x,t)$ and $b(x,t)$ are tangent to $M$.
	\label{assumptionTangent}
\end{assumption}

\cref{assumptionMeasurable,assumptionLipschitz} ensure that \cref{eq:sdeIto} (or equivalently \cref{eq:sdeStratonovich}) has a unique solution. As is well known, \cref{assumptionTangent} implies that the true solution of \eqref{eq:sdeIto} remains in $M$. Of course, \cref{assumptionTangent} trivially holds when $M=\mathbb{R}^n$, but we are interested in the case where the inclusion is proper.

For each point $x \in \mathbb{R}^n$, choose a map \begin{equation*} \gamma(x,t,v): \mathbb{R}^n \times [0,T] \times (\mathbb{R} \times \mathbb{R}^k) \rightarrow \mathbb{R}^n, \end{equation*} where the parameter $v=(v_0,v_1,\dots,v_k)$ may be thought of as $(\delta t, \delta W^1, \dots,\delta W^k)$. For fixed values of $v,t$, the map $\gamma$ may be visualised as a field of curves in the ambient space $\mathbb{R}^n$ whose definition does not require any information about the driving Brownian motion beyond that required for the Euler--Maruyama scheme. In practice, it may only be possible to compute $\gamma$ approximately; let $\tilde{\gamma}(x,t,v)$ denote this approximation.

\begin{definition}
The \textit{jet scheme} (for a particular choice of $\gamma$) is the following numerical scheme for the solution $X$ of \cref{eq:sdeIto}. Let a discretisation $0=t_0<t_1<\dots<t_N=T$ be given. Set $Y^{N, \gamma}_0 = x_0$, and put
\begin{equation}
\label{eq:jetscheme} Y_{i+1}^{N,\gamma} = \tilde{\gamma}(Y_i,t,\delta t_i, \delta W_i)
\end{equation}
where $\delta t_i = t_{i+1} - t_{i}$, $\delta W_i = W_{t_{i+1}} - W_{t_i}$. Thus $Y_{i}$ is an approximation to $X_{t_i}$.
\end{definition}

In the case where $\gamma$ is sufficiently regular, the approximation $\tilde{\gamma}=\gamma$ is perfect, and the time discretisation is evenly spaced, it is shown in \cite{armstrong2018coordinate} that $Y$ will converge in the $L^2$ sense, as $N \rightarrow \infty$, to the solution of the It\^o SDE 
\[ \ed X_t = \frac{1}{2}(\Delta \gamma_{X,t})(0) \, \ed t + (\nabla_{\ed W_t} \gamma_{X,t})(0). \]
Here $\Delta$ is the Laplacian and $\nabla$ is the Euclidean covariant derivative on $\mathbb{R}^k$. In particular, the limit of \cref{eq:jetscheme} as $\delta t \to 0$ only depends upon the first- and second- order derivatives of $\gamma_{x,t}$. In the language of differential geometry, we say that the limit is determined by the 2-jet of $\gamma$, hence the name of our scheme. The point is that the $Y_i$ will converge to the SDE of interest provided that we can arrange for $\gamma$ to have the correct first- and second- order derivatives, and appropriate regularity. We therefore assume that $\gamma$ has the following properties.

\begin{property}(Correct 2-jet)
	When $v=0$, we have
	\[ \gamma(x,t,0)=x, \]
	\[ \frac{\partial \gamma(x,t,0)}{\partial v^\alpha}=b_{\alpha}(x,t), \]
	\[ \frac{\partial \gamma(x,t,0)}{\partial v^0} + \frac{1}{2}\sum_{\alpha=1}^k \frac{\partial^2 \gamma(x,t,0,0)}{\partial v^\alpha \partial v^\alpha}=a(x,t). \]
	\label{propertyFirstTwoDerivs}
\end{property}

\begin{property}(Remains in $M$)
	If $x \in M$ then $\gamma(x,t,v) \in M$ for all $t \in [0,T]$, $v \in \mathbb{R}^{k+1}$.
	\label{propertyRemainsInM}
\end{property}

\begin{property}(Growth of Derivatives)
	For this property, we first specify some integer $r$. The property 	holds for this value of $r$ if, for each multi-index $\alpha$ with $|\alpha| \leq r$, we have 
	\[ \left| \partial_{v}^\alpha \gamma(x,t,v)  \right| \leq K(1+|x|)e^{K(1+|v|^2)} \]
	for some constant $K$ which may depend on $\alpha$ but not on $x$ or $v$.
	\label{propertyrthDerivative}
\end{property} 

\begin{property} (Lipschitz derivatives) For each multi-index $\alpha$ with $|\alpha| \leq 2$, we have
	\[ \left| \partial^\alpha_v \gamma(x,t,v) - \partial^\alpha_v \gamma(y,t,v) \right|
	\leq K|x-y|e^{K(1+|v|^2)} \]
	for some constant $K$ which may depend on $\alpha$ but not on $x$, $y$, or $v$.
	\label{propertyLipschitzDerivs}
\end{property}

These assumptions are more general than in \cite{armstrong2018coordinate}, which, for example, required the derivatives to be globally bounded, and did not allow $\gamma$ to depend explicitly on $v_0$.

In this paper, $\gamma$ will be defined implicitly by an ODE, and $\tilde{\gamma}$ produced by a user-chosen ODE scheme. The following definition measures the accuracy of this scheme.

\begin{definition}
Let $m \in \mathbb{N}$. We say that $\tilde{\gamma}(x,t,v)$ is an $m$-good approximation to $\gamma(x,t,v)$ if there exists a constant $K$ such that 
\[ |\tilde{\gamma}(x,t,v)-\gamma(x,t,v)| \leq K |v|^m e^{K (1+|v|^2)} (1+|x|) \]
for all $x,t,v$.
\end{definition}

The main force of this definition is that, when $|v|$ is small, we have
\[ |\tilde{\gamma}(x,t,v)-\gamma(x,t,v)| = O(|v|^m (1+|x|)). \]
The value of $m$ depends upon the choice of ODE scheme. If $|v|$ is small, then typically only a single time step will be required for the ODE scheme to achieve the desired rate of convergence. Given that the bound holds for small $|v|$, the assumption that it holds for large $|v|$ is not at all strong and should be true for any reasonable ODE scheme based on Taylor's theorem; see \Cref{sec:howtochoosegamma} for further discussion.

An obvious way to define $\tilde{\gamma}$ is to truncate the Taylor series for $\gamma$, obtaining 
\begin{equation}
\tilde{\gamma}_{(r)}(x,v,t)=x+\sum_{i=1}^r \sum_{|\alpha| = i} \frac{v^\alpha}{\alpha!} \partial_v^\alpha \gamma(x,t,0). \label{eq:orderrexpansionjetscheme}
\end{equation}

For an integer $r$, we define the \textbf{order $r$ expansion jet scheme} to be the jet scheme, choosing $\tilde{\gamma}$ as in (\ref{eq:orderrexpansionjetscheme}). In this case, the ODE-solving part of the scheme is explicit and requires only a single time step, as promised. It will be useful not only as a practical scheme but also in proving the convergence properties of other jet schemes.

Before stating our main result, we define the two main notions of convergence for SDE schemes.

\begin{definition}
Let $0=t_0 < t_1 < \dots < t_N = T$ be a given discretisation of $[0,T]$. Let $(X_t)_{t \in [0,T]}$ be the true solution of \eqref{eq:sdeIto}, and let $Y_t$ be an approximation to $X_t$ obtained from some numerical scheme. Let $\delta t = \max_{0 \leq i < N}(t_{i+1}-t_{i})$. We say that the scheme \textit{converges with strong order $n$} if 
\begin{equation}
\mathbb{E}[\max_i |Y_{t_i} - X_{t_i}|^2] \leq C(\delta t)^{2n}
\end{equation}
for some constant $C$ independent of the discretisation. We say that the scheme \textit{converges with weak order $n$} if, for any smooth function $g: \mathbb{R}^n \rightarrow \mathbb{R}$ of at most polynomial growth,
\begin{equation}
\left| \: \mathbb{E}[g(Y(t_N)) - g(X(t_N))] \: \right| \leq C(\delta t)^n
\end{equation} 
where $C$ is again independent of the discretisation.
\end{definition}

The concepts of strong and weak convergence are distinct. High strong order signifies good approximation of the path $t \mapsto X_t$, whilst high weak order signifies good approximation of integrals such as $\mathbb{E}[X_T]$.

\begin{theorem} Suppose that $\gamma$ satisfies \cref{propertyFirstTwoDerivs,propertyrthDerivative} for $r=2$. Let $\tilde{\gamma}$ be a $3$-good approximation to $\gamma$. Then the jet scheme converges to the solution of \cref{eq:sdeIto} with strong order $\frac{1}{2}$ and weak order 1. Suppose in addition that $\gamma$ satisfies \cref{propertyRemainsInM,propertyLipschitzDerivs}, and that $\tilde{\gamma}$ is $m$-good for some $m \geq 3$. Then 
\begin{equation}
\label{eq:mainresstrong}
\mathbb{E}[ \max_{1 \leq i \leq N}\inf_{x \in M}\{ |Y^{\tilde{\gamma}}_{i} - x|^2 \} ] = O( (\delta t)^{m-2} ).
\end{equation} where $(Y^{\tilde{\gamma}}_{i})_{i=1}^N$ denotes the iterates of the approximate jet scheme. Also, for any smooth function $g: \mathbb{R}^n \rightarrow \mathbb{R}$ of at most polynomial growth we have
\begin{equation}
\label{eq:mainresweak}
|\mathbb{E}[\inf_{x \in M}\{g(Y^{\tilde{\gamma}}_i)-g(x) \}]| =O((\delta t)^{\frac{m}{2}}).
\end{equation}
\label{thm:convergenceOnManifolds}
\end{theorem}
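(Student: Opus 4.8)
The plan is to derive both assertions of \Cref{thm:convergenceOnManifolds} from the general convergence result of \Cref{sec:workhorse}, applied to two different comparisons. For the first assertion we regard the jet scheme as a one-step scheme for \eqref{eq:sdeIto} and estimate its local error against the stochastic Taylor expansion of the true solution. Taylor-expanding $\gamma(x,t,v)$ in $v$ about $v=0$ and using \Cref{propertyFirstTwoDerivs}, the constant, linear and trace-of-quadratic coefficients of that expansion reproduce precisely the $x$, $\sum_\alpha b_\alpha\,\delta W^\alpha$ and $a\,\delta t$ contributions to the one-step increment of \eqref{eq:sdeIto}; \Cref{propertyrthDerivative} with $r=2$ bounds the second-order Taylor remainder and furnishes the polynomial-growth and finite-moment bounds the workhorse requires, while passing from $\gamma$ to a $3$-good $\tilde\gamma$ contributes an extra one-step error of order $|v|^3$. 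One then checks, just as for Euler--Maruyama, that the resulting conditional-mean one-step error is $O((\delta t)^2)$ and the conditional $L^2$ one-step error is $O(\delta t)$ (the leading discrepancies being the higher iterated-integral/L\'evy-area terms a $2$-jet cannot match, together with the controlled $\tilde\gamma$-error), and feeding these into the general estimate of \Cref{sec:workhorse} yields global strong order $\frac12$ and weak order $1$.

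For the second assertion the key device is to run, alongside the approximate jet scheme $Y^{\tilde\gamma}$, the \emph{exact} jet scheme $Y^{\gamma}$, obtained by replacing $\tilde\gamma$ with $\gamma$ itself while keeping the same Brownian increments and discretisation. Since $x_0\in M$, an induction using \Cref{propertyRemainsInM} shows $Y^{\gamma}_i\in M$ for every $i$, so for each $i$ we have the pointwise bounds $\inf_{x\in M}|Y^{\tilde\gamma}_i-x|\le|Y^{\tilde\gamma}_i-Y^{\gamma}_i|$ and $\inf_{x\in M}|g(Y^{\tilde\gamma}_i)-g(x)|\le|g(Y^{\tilde\gamma}_i)-g(Y^{\gamma}_i)|$ for Lipschitz $g$. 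It therefore suffices to control $D_i:=Y^{\tilde\gamma}_i-Y^{\gamma}_i$, the gap between two discrete schemes differing only through $\tilde\gamma$ versus $\gamma$. Writing, with $v_i=(\delta t_i,\delta W_i)$,
\[
D_{i+1}=\bigl[\tilde\gamma(Y^{\tilde\gamma}_i,t_i,v_i)-\gamma(Y^{\tilde\gamma}_i,t_i,v_i)\bigr]+\bigl[\gamma(Y^{\tilde\gamma}_i,t_i,v_i)-\gamma(Y^{\gamma}_i,t_i,v_i)\bigr],
\]
the first bracket is $O\bigl(|v_i|^{m}e^{K|v_i|^2}(1+|Y^{\tilde\gamma}_i|)\bigr)$ by $m$-goodness, of conditional $L^2$ size $O((\delta t)^{m/2})$ once $\delta t$ is small enough that the relevant Gaussian exponential moments of $|\delta W_i|$ are finite, and the second bracket is Lipschitz in the state with a random constant $O(e^{K|v_i|^2})$ by the $|\alpha|=0$ case of \Cref{propertyLipschitzDerivs}, supplying the stability for a discrete Gronwall argument. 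One also needs uniform-in-$N$ moment bounds on $\max_i|Y^{\tilde\gamma}_i|$ and $\max_i|Y^{\gamma}_i|$ to absorb the $(1+|x|)$ factors; these follow from \Cref{propertyrthDerivative} by a parallel moment recursion. Assembling these one-step estimates through the general result of \Cref{sec:workhorse} (with $Y^{\gamma}$ in the role of the reference process), the per-step error of $L^2$-size $O((\delta t)^{m/2})$ accumulates over the $N=O(1/\delta t)$ steps to give $\E[\max_{1\le i\le N}|D_i|^2]=O\bigl(N^2(\delta t)^{m}\bigr)=O((\delta t)^{m-2})$, which is \eqref{eq:mainresstrong}.

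The genuinely delicate point, and the step I expect to be the main obstacle, is that the per-step errors $\tilde\gamma-\gamma$ are controlled only in modulus and \emph{not} in conditional mean, so they act as a systematic bias rather than a martingale increment; this is exactly why \eqref{eq:mainresstrong} carries the exponent $m-2$ rather than the $m-1$ a mean-zero heuristic would predict, and why this accumulation cannot simply be quoted from the classical strong-convergence theory but must be carried out by hand, keeping careful track of the random factors $e^{K|v_i|^2}$ and of the $L^p$-boundedness of the iterates over all $N$ steps.

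For the weak estimate \eqref{eq:mainresweak} one runs the companion telescoping argument: write $g(Y^{\tilde\gamma}_i)-g(Y^{\gamma}_i)$ as a sum over $j<i$ of the effect of swapping $\tilde\gamma$ for $\gamma$ at step $j$ and then propagating through the remaining $\gamma$-steps, so that each summand is the expectation of a smooth, polynomially bounded functional tested against the one-step error $e_j=\tilde\gamma(Y^{\tilde\gamma}_j,t_j,v_j)-\gamma(Y^{\tilde\gamma}_j,t_j,v_j)$. Here one exploits the extra cancellation in $e_j$ --- to leading order, when $\tilde\gamma$ is a Taylor truncation, $e_j$ is a homogeneous polynomial in $v_j$ whose monomials containing an odd power of some $\delta W^{\alpha}_j$ have vanishing conditional expectation --- which, after summing the $N$ contributions and invoking \Cref{propertyLipschitzDerivs}, the polynomial growth of $g$ and the moment bounds on the iterates, upgrades the rate to the stated $O((\delta t)^{m/2})$. (The order $r$ expansion jet scheme $\tilde\gamma_{(r)}$ is the natural place to verify this cancellation, and the general $m$-good case is then handled by comparison with it.)
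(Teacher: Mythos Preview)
Your overall architecture matches the paper: for the first assertion you compare the jet scheme (through the order-$2$ expansion) with Euler--Maruyama via the workhorse of \Cref{sec:workhorse}, and for \eqref{eq:mainresstrong}--\eqref{eq:mainresweak} you compare the approximate scheme $Y^{\tilde\gamma}$ with the exact jet scheme $Y^{\gamma}$, which stays in $M$ by \Cref{propertyRemainsInM}. The decomposition of $D_{i+1}$ into the $m$-goodness error and the propagated difference is exactly what the paper does.

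There is, however, a real gap in your stability step. You say the second bracket $\gamma(Y^{\tilde\gamma}_i,t_i,v_i)-\gamma(Y^{\gamma}_i,t_i,v_i)$ is ``Lipschitz in the state with a random constant $O(e^{K|v_i|^2})$ by the $|\alpha|=0$ case of \Cref{propertyLipschitzDerivs}'', and that this suffices for a discrete Gronwall. It does not. That bound only gives $|\,\text{second bracket}\,|\le Ke^{K(1+|v_i|^2)}|D_i|$, a multiplicative factor whose conditional mean is bounded away from $1$; iterating it produces a factor growing like $C^N$, and the workhorse hypothesis \eqref{eq:lipschitsfbar} is \emph{not} satisfied. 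What is needed---and this is precisely why \Cref{propertyLipschitzDerivs} is stated for all $|\alpha|\le 2$, not just $|\alpha|=0$---is to subtract off the identity using $\gamma(x,t,0)=x$ and Taylor-expand in $v$:
\[
\gamma(x,t,v)-\gamma(y,t,v)-(x-y)=\sum_{\alpha}v^{\alpha}\bigl[\partial_{v_\alpha}\gamma(x,t,0)-\partial_{v_\alpha}\gamma(y,t,0)\bigr]+\tfrac12\sum_{\alpha,\beta}v^{\alpha}v^{\beta}\bigl[\partial^2_{v_\alpha v_\beta}\gamma(x,t,\xi v)-\partial^2_{v_\alpha v_\beta}\gamma(y,t,\xi v)\bigr].
\]
The first-order terms in $\delta W$ are then \emph{mean-zero} and $O^0_{\mathcal F}(|x-y|(\delta t)^{1/2})$, while the $v^0$ term and the second-order remainder are $O_{\mathcal F}(|x-y|\,\delta t)$; this is the content of \Cref{prop:ofbounds}\,(v) in the paper, and it is exactly the splitting \eqref{eq:lipschitsfbar} that makes the Gronwall close. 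Without isolating the mean-zero leading part, the cross term $2\langle D_i,\mathbb E[R_i\mid\mathcal F_{t_i}]\rangle$ in the $L^2$ recursion is only $O(|D_i|^2(\delta t)^{1/2})$, which sums to something divergent over $N\sim(\delta t)^{-1}$ steps.

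A smaller point on \eqref{eq:mainresweak}: your argument relies on ``extra cancellation'' in $e_j=\tilde\gamma-\gamma$, but $m$-goodness controls only $|e_j|$, not its conditional mean, so an arbitrary $m$-good $\tilde\gamma$ need not exhibit the odd-moment cancellation you invoke; your proposed reduction to the Taylor-truncation case does not recover this for a general $\tilde\gamma$. The paper instead handles \eqref{eq:weakworkhorseMainAssumption} by factoring $F_p(\tilde\gamma-y)-F_p(\gamma-y)$ so that a component of $\tilde\gamma-\gamma$ appears, and then applying Cauchy--Schwarz together with the $m$-goodness bound and the moment estimates of \Cref{lemma:weakassumptioncheckA}; no structural cancellation in $e_j$ is assumed.
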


\Cref{thm:convergenceOnManifolds} is our main result, the significance of which is as follows. The Euler--Maruyama scheme converges with strong order $\frac{1}{2}$ and weak order $1$, so the jet scheme converges just as well as the Euler--Maruyama scheme. However, in the sense of remaining close to $M$, the jet scheme resembles a higher-order scheme with strong order $\frac{m}{2}-1$ and weak order $\frac{m}{2}$. Physically, the high strong order convergence means that the sample paths of the scheme, even if not perfectly accurate, will nevertheless lie close to $M$. The high weak-order convergence means that the scheme will, for example, estimate the expected energy of the system to a high degree of accuracy. 

\section{Choosing the jet map $\gamma$} \label{sec:howtochoosegamma}

To use jet schemes in practice, we need a map $\gamma$ (and $\tilde{\gamma}$) satisfying the given properties, and which is easy to calculate. Recall that $\ol{a}$ was defined in \cref{eq:sdeStratonovich} to be the Stratonovich analog of $a$.

\begin{lemma}
Given $x,v \in \mathbb{R}^n$, $t \in [0,T]$, let $X(x,t,v)$ be the vector field
\begin{equation}
X(x,t,v) = \sum_{\alpha=1}^k v^{\alpha}b_{\alpha}(x,t) + \frac{1}{k} \sum_{\alpha=1}^k (v^{\alpha})^2 \: \ol{a}(x,t).
\label{eq:PhiDefinition}
\end{equation}
Given $s \in [0,1]$, let $\Phi(x,t,v,s)$ be the flow of $X(x,t,v)$, starting from $x$ and running for time $s$. This means that $\Phi$ satisfies the differential equation 
\[ \frac{\partial{\Phi}}{\partial s}=X(\Phi(x,t,v),t,v); \hspace{10pt} \Phi(x,t,v,0)=x. \]
Finally, set $\gamma(x,t,v)=\Phi(x,t,v,1)$. Then \cref{propertyFirstTwoDerivs,propertyRemainsInM} are both satisfied.
\label{lemma:gammadef}
Moreover, if we instead define $X$ to be 
\begin{equation} 
X(x,t,v) = \sum_{\alpha=1}^k v^{\alpha}b_{\alpha}(x,t) + v^0 \: \ol{a}(x,t).
\label{eq:PhiDefinition2}
\end{equation} and proceed as before, then the same conclusion holds.
\end{lemma}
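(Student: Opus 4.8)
The plan is to check the two properties separately, since they are of quite different natures. I would dispatch \cref{propertyRemainsInM} first, as it is the soft part: for each fixed $t$ and $v$, the vector field $X(\cdot,t,v)$ in \eqref{eq:PhiDefinition} is a linear combination of $b_1(\cdot,t),\dots,b_k(\cdot,t)$ and $\ol a(\cdot,t)$ with coefficients independent of the base point. By \cref{assumptionTangent} each of these vectors is tangent to $M$ at every point of $M$, so $X(\cdot,t,v)$ is a vector field that is everywhere tangent to $M$ along $M$; hence an integral curve of $X(\cdot,t,v)$ starting on $M$ remains on $M$ (uniqueness of integral curves, read off in a chart for $M$), and in particular $\gamma(x,t,v)=\Phi(x,t,v,1)\in M$ for $x\in M$. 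The same argument applies verbatim to the alternative field \eqref{eq:PhiDefinition2}. (Throughout I assume $a$ and $b$ are regular enough --- e.g. $b\in C^3$ and $a\in C^2$, with the linear growth that \cref{assumptionLipschitz} already supplies --- that $\Phi(x,t,v,\cdot)$ exists on $[0,1]$ for every $v$ and is $C^2$ in $(x,v)$; this is the standard smooth-dependence-on-data theory for ODEs.)

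For \cref{propertyFirstTwoDerivs} I would Taylor-expand the flow in $v$ about $v=0$. Every term of $X(y,v)$ carries a factor $v^\alpha$, so $X(\cdot,0)\equiv 0$; therefore the constant curve $s\mapsto x$ solves the flow equation, giving $\gamma(x,t,0)=x$, and moreover every spatial derivative of $X$ vanishes at $v=0$. Writing $\xi_\alpha(s)=\partial_{v^\alpha}\Phi(x,t,0,s)$ and differentiating the flow equation once in $v^\alpha$, every term in which a spatial derivative of $X$ appears drops out at $v=0$, leaving $\tfrac{\partial}{\partial s}\xi_\alpha(s)=b_\alpha(x,t)$ with $\xi_\alpha(0)=0$, whence $\partial_{v^\alpha}\gamma(x,t,0)=\xi_\alpha(1)=b_\alpha(x,t)$; and $\partial_{v^0}\gamma(x,t,0)=0$ because \eqref{eq:PhiDefinition} has no $v^0$.

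The main work is the second-order term. Differentiating the flow equation a second time in $v$ and setting $v=0$, every contribution involving a spatial derivative of $X$ vanishes again, and substituting $\xi_\gamma(s)=s\,b_\gamma(x,t)$ from the previous step reduces the equation for $\eta_{\alpha\beta}(s):=\partial^2_{v^\alpha v^\beta}\Phi(x,t,0,s)$ to
\[
\tfrac{\partial}{\partial s}\eta_{\alpha\beta}(s)=s\sum_{j}\Big(b_\alpha^j\tfrac{\partial b_\beta}{\partial x^j}+b_\beta^j\tfrac{\partial b_\alpha}{\partial x^j}\Big)(x,t)+\big(\partial^2_{v^\alpha v^\beta}X\big)(x,t,0),\qquad \eta_{\alpha\beta}(0)=0.
\]
For \eqref{eq:PhiDefinition} one has $\big(\partial^2_{v^\alpha v^\beta}X\big)(\cdot,t,0)=\tfrac2k\delta_{\alpha\beta}\,\ol a(\cdot,t)$; integrating over $s\in[0,1]$, setting $\beta=\alpha$, and summing over $\alpha$ gives $\sum_\alpha\eta_{\alpha\alpha}(1)=\sum_{\alpha,j}b_\alpha^j\tfrac{\partial b_\alpha}{\partial x^j}+2\ol a(x,t)$, the factor $\tfrac1k$ and the $k$-fold sum conspiring to leave exactly one copy of $2\ol a$. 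Dividing by $2$ and invoking the definition of $\ol a$ in \eqref{eq:sdeStratonovich} gives $\tfrac12\sum_\alpha\partial^2_{v^\alpha v^\alpha}\gamma(x,t,0)=\ol a(x,t)+\tfrac12\sum_{\alpha,j}b_\alpha^j\tfrac{\partial b_\alpha}{\partial x^j}=a(x,t)$, which together with $\partial_{v^0}\gamma=0$ is the third identity of \cref{propertyFirstTwoDerivs}. For \eqref{eq:PhiDefinition2} the field is linear in $v$, so $\partial^2_{v^\alpha v^\beta}X\equiv 0$ and the missing $\ol a$ instead appears at first order, as $\partial_{v^0}\gamma(x,t,0)=\ol a(x,t)$; the same bookkeeping then reproduces the identity. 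I expect the only real care needed is to keep spatial and $v$-derivatives straight when differentiating the flow equation twice and to check that the combinatorial factors (the $\tfrac1k$ against the $k$-term sum, and the $\tfrac12$ from the $s$-integration) match up; everything else is routine.
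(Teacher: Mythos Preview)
Your proposal is correct and follows essentially the same approach as the paper: both arguments dispatch \cref{propertyRemainsInM} by noting that $X(\cdot,t,v)$ is a linear combination of vector fields tangent to $M$, and both verify \cref{propertyFirstTwoDerivs} by differentiating the flow equation in $v$ and evaluating at $v=0$. The only cosmetic difference is that the paper works with the integral form $\Phi = x + \int_0^s X(\Phi,v)\,dz$ and differentiates under the integral, whereas you differentiate the ODE $\partial_s\Phi = X(\Phi,v)$ directly to obtain variational equations for $\xi_\alpha$ and $\eta_{\alpha\beta}$; the resulting computations are line-for-line equivalent. One small wording quibble: when you say ``every contribution involving a spatial derivative of $X$ vanishes again'' at second order, note that the mixed derivatives $\partial^2_{y^j v^\beta}X$ do \emph{not} vanish at $v=0$ (they give $\partial_{x^j}b_\beta$), and indeed your displayed equation for $\eta_{\alpha\beta}$ correctly retains them --- so the computation is right, only the prose slightly overstates what drops out.
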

The proof of this lemma is a direct computation; see the appendix for details.

We see that \cref{propertyFirstTwoDerivs,propertyRemainsInM} do not specify $\gamma$, uniquely; the optimal choice of $\gamma$ is likely to depend on the problem at hand. We shall call the schemes obtained by choosing $\gamma$ as in \cref{eq:PhiDefinition,eq:PhiDefinition2} the $(\delta W)^2$-jet scheme and the $(\delta t)$-jet scheme respectively. The $(\delta W)^2$-jet scheme has no explicit dependence on $v_0$, so may be easier to analyse theoretically. On the other hand, the $(\delta t)$-jet scheme can give perfect answers in cases where the $(\delta W)^2$-jet scheme does not - see \Cref{sec:invariance}. As observed in \cite{armstrong2018coordinate}, it is also possible to choose $\gamma$ to be the composition of two flows, the first for time $s$ and the second for time $s^2$: $\gamma_{x,t}(s)=\Phi_{s^2}(\ol{a}) \circ \Phi_s(b)(x)$. However, this requires solving two ODEs, whilst the choices of $\gamma$ above require only one.

\subsection{Regularity Considerations}

We now address the circumstances under which the regularity properties hold for $\gamma$. We focus our discussion on the $(\delta W)^2$-jet scheme for brevity. \Cref{assumptionLipschitz} is enough to give us good control over $\gamma(x,t,v)$. However, the regularity properties require control over the $v$-\textit{derivatives} of $\gamma$. We first remark that the case where $|v|$ is large presents no difficulty because one could (although we do not) redefine $\phi$ to be the flow of 
\begin{equation}
X^{\text{modified}}(x,t,v) = \sum_{\alpha=1}^k \left(v^{\alpha}b_{\alpha}(x,t) + \frac{1}{k} \sum_{\alpha=1}^k (v^{\alpha})^2 \: \ol{a}(x,t) \right) \Gamma(x,v)
\label{eq:PhiModifiedDefinition}
\end{equation} 
where $\Gamma$ is a smooth cutoff function equal to zero when $v>\min(1,e^{-x})$. This does not affect the derivatives of $\gamma$ at $v=0$. The most important factor is how the $x$-derivatives of $a(x,t)$ and $b(x,t)$ decay when $|x|$ is large. When these derivatives are zero outside a compact set, we are able to prove that $\gamma$ has the required regularity:

\begin{lemma}
Suppose that $a(x,t)$ and $b_\alpha(x,t)$ satisfy \cref{assumptionLipschitz,assumptionMeasurable}, are $(r+1)$-times differentiable, and are uniformly compactly supported. Then \cref{propertyLipschitzDerivs,propertyrthDerivative} (for all $r$) hold for both the $(\delta t)$ and $(\delta W)^2$-jet schemes.
\label{lemma:compactsupportregularity}
\end{lemma}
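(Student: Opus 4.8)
The plan is to use compact support to confine everything to a bounded region of $x$-space, and then to bound the $v$-derivatives (and the mixed $x,v$-derivatives) of the flow $\Phi$ by Gr\"onwall's inequality applied to the variational equations of the flow.

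First I would fix $S_t=\mathrm{supp}\,a(\cdot,t)\cup\bigcup_\alpha\mathrm{supp}\,b_\alpha(\cdot,t)$ and set $K=\overline{\bigcup_{t\in[0,T]}S_t}$, which is a fixed compact set by the uniform-compact-support hypothesis. The vector field $X(x,t,v)$ in \cref{eq:PhiDefinition} (and likewise \cref{eq:PhiDefinition2}) is supported in $S_t\subseteq K$ in the $x$-variable, and is $C^\infty$ in $v$ (being polynomial) and $C^r$ in $x$ (one checks $\ol a$ loses a single derivative relative to $b$), hence $C^r$ in $(x,v)$; since it has compact $x$-support the flow $\Phi$ is complete, and if $x\notin K$ then $X(x,t,v)=0$, so by uniqueness $\Phi(x,t,v,s)\equiv x$ and $\gamma(x,t,v)=x$ there. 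Thus outside $K$ we have $\partial_v^\alpha\gamma=0$ for $|\alpha|\ge1$, $\partial_x\partial_v^\alpha\gamma=0$ for $1\le|\alpha|\le2$, and $\partial_x\gamma=\mathrm{Id}$, so \cref{propertyrthDerivative,propertyLipschitzDerivs} are immediate there; and because $\gamma$ is $C^r$ in $x$ globally, the Lipschitz claim over all of $\mathbb{R}^n$ will follow from the mean value theorem once the derivatives in question are shown bounded. It therefore remains to produce bounds of the form $\mathrm{poly}(|v|)\,e^{C(|v|+|v|^2)}$, uniformly over $x\in K$ and $t\in[0,T]$, for $\partial_v^\alpha\Phi(x,t,v,s)$ with $|\alpha|\le r$ and for $\partial_x\partial_v^\alpha\Phi(x,t,v,s)$ with $|\alpha|\le2$, $s\in[0,1]$ --- on $K$ the factor $1+|x|$ is bounded above and below, and $\mathrm{poly}(|v|)\,e^{C(|v|+|v|^2)}\le Ke^{K(1+|v|^2)}$, so such bounds give exactly the two Properties.

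To get these bounds I would differentiate the flow equation $\partial_s\Phi=X(\Phi,t,v)$. The first $v$-derivatives $\psi^{(\alpha)}:=\partial_{v^\alpha}\Phi$ satisfy
\[
  \partial_s\psi^{(\alpha)}=\big(\partial_xX\big)(\Phi,t,v)\,\psi^{(\alpha)}+\big(\partial_{v^\alpha}X\big)(\Phi,t,v),\qquad\psi^{(\alpha)}(0)=0,
\]
and the higher $v$-derivatives $\partial_v^\alpha\Phi$ solve linear equations of the same shape whose inhomogeneous term is, by Fa\`a di Bruno, a universal polynomial in the lower-order $v$-derivatives of $\Phi$ and in the $(x,v)$-derivatives of $X$ along $\Phi$. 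On $K$ all the needed $\partial_x^\gamma\partial_v^\beta X$ are bounded (by compact support plus $(r+1)$-fold differentiability, noting $X$ is at most quadratic in $v$ so its $v$-derivatives of order $\ge3$ vanish identically), and $\|(\partial_xX)(\cdot,t,v)\|\le C(|v|+|v|^2)$. Inducting on $|\alpha|$ and applying Gr\"onwall to each linear equation, the integrating factor is $\exp\!\big(\int_0^1\|(\partial_xX)(\Phi,t,v)\|\,\ed s\big)\le e^{C(|v|+|v|^2)}$ and the forcing is polynomial in $|v|$, giving $|\partial_v^\alpha\Phi(x,t,v,s)|\le\mathrm{poly}(|v|)\,e^{C(|v|+|v|^2)}$ on $s\in[0,1]$ and hence \cref{propertyrthDerivative} at $s=1$. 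For \cref{propertyLipschitzDerivs} I would differentiate once more in $x$: the $\partial_x\partial_v^\alpha\Phi$ with $|\alpha|\le2$ again solve linear equations of the same type, governed by the same $(\partial_xX)(\Phi,t,v)$, so the identical Gr\"onwall argument bounds them on $K\times[0,T]$, and combined with their vanishing outside $K$ and the mean value theorem this yields the Lipschitz bound for $\partial_v^\alpha\gamma$ in $x$, with the claimed $e^{K(1+|v|^2)}$ dependence, over all of $\mathbb{R}^n$. The $(\delta t)$-jet scheme is handled verbatim; since there $X$ is linear rather than quadratic in $v$, the estimates only improve.

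The hard part is not conceptual but bookkeeping: organizing the tower of variational equations and checking, through the inductive Gr\"onwall estimates, that the polynomial-in-$|v|$ prefactors and the exponential of a quadratic in $|v|$ that Gr\"onwall naturally produces really do fit inside the required envelope $K(1+|x|)e^{K(1+|v|^2)}$. The one genuinely delicate ingredient is uniformity in $t$: since \cref{assumptionMeasurable} gives only measurability in $t$, one must ensure that the $x$- and $v$-derivatives of $a$ and $b$ of the relevant orders are bounded \emph{uniformly} over $t\in[0,T]$ on $K$ --- which is what "uniformly compactly supported" together with the differentiability hypothesis should be taken to provide --- so that every constant $C$ above is independent of $t$ and all the estimates are uniform in $t$ as required.
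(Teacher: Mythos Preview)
Your proposal is correct and follows essentially the same strategy as the paper: differentiate the flow equation $\partial_s\Phi=X(\Phi,t,v)$ successively in $v$ (and then in $x$), obtaining a tower of linear ODEs in $s$, and bound each level inductively via Gr\"onwall/ODE comparison, exploiting that the compact support makes all the needed $x$-derivatives of $X$ uniformly bounded. The paper packages the comparison step as a separate lemma (if $|\dot u|\le A|u|+B$ then $|u(s)|\le(C+B/A)e^{As}-B/A$) rather than invoking Gr\"onwall by name, and is terser about the induction; your explicit splitting into $x\in K$ versus $x\notin K$ (where the flow is the identity and the estimates are trivial), your remark that $\ol a$ loses one derivative relative to $b$, and your attention to uniformity in $t$ are refinements the paper leaves implicit, but the underlying argument is the same.
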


The proof is an iterative argument using an ODE comparison theorem; the details are postponed to the appendix. If we assume instead that the $x$-derivatives of $a$ and $b$ are uniformly bounded, then a similar ODE comparison argument yields bounds of the form
\begin{equation}
|\partial^{\alpha}_v \gamma| \leq K(1+|x|^{r_{\alpha}})e^{K(1+|v|^2)}
\label{eq:boundwithpowersofx}
\end{equation}
where $r_{\alpha}=1$ if $|\alpha|=0$ or $|\alpha|=1$. Should the $x$-derivatives of $a$, $b$ fail to decay at all, then it need not be true that $r_{\alpha}=1$ for all $\alpha$, as the following example shows.

\begin{example}
Consider the one-dimensional case $n=k=1$ and set $a=0$. Then $\Phi$ is given by
\[ \frac{\partial}{\partial s} \Phi = v b(\Phi) \]
with initial condition $\Phi(x,v,0)=x$. For a suitable constant $\Phi_0$, let
\[ F(\Phi)=\int_{\Phi_0}^{\Phi} \frac{1}{b(\Phi)} d\Phi \]
and let $G$ denote the inverse to $F$ (in the region where this exists). By separating the variables we may solve explicitly for $\Phi$ and hence compute the $v$-derivatives in terms of $F$ and $G$. We get
\[ \Phi(x,v,s)=G(vs+F(x)), \]
\[ \Phi_v(x,v,s)=sb(\Phi), \]
\[ \Phi_{vv}(x,v,s)=s^2 b'(\Phi)b(\Phi), \]
\[ \Phi_{vvv}(x,v,s)=s^3 b''(\Phi)b^2(\Phi) + s^3(b'(\Phi))^2b(\Phi). \]
Suppose that $b(x)=1+x+\sin(x)$. Then $b$ is Lipschitz with bounded derivatives. But
\[ \gamma_{vvv}(x,0)= -\sin(x) b^2(x) + (\cos(x))^2 b(x) \]
so we cannot take $r=1$ in \cref{eq:boundwithpowersofx} even when $v=0$.
\end{example}


However, we do not expect such pathologies to occur in practice. Every quantity in a numerical simulation is bounded due to the finite capacity of a computer, meaning that the assumptions of \cref{lemma:compactsupportregularity} apply. In situations where \cref{propertyLipschitzDerivs,propertyrthDerivative} do not hold, we can consider convergence in probability instead, a particularly appropriate notion for problems on manifolds because it is invariant under diffeomorphisms. We give an example to illustrate our approach; see also Appendix D of \cite{armstrong2018coordinate}. Suppose that $a$ and the $b_\alpha$ are known only to satisfy \Cref{assumptionMeasurable,assumptionLipschitz} and to be smooth. Then $\mathbb{E}[\max_{[0,T]} |X_t|] < \infty$, so we may choose a compact set $S$ such that the probability of $X$ leaving $S$ is arbitrarily small. Then \cref{propertyLipschitzDerivs,propertyrthDerivative} will hold for all $x,y \in S$. Our proof of convergence when the coefficients of the SDE are compactly supported then shows that our scheme converges in probability. Convergence in probability is metrisable via $d(X,Y)=\mathbb{E}[\min(|X-Y|,1)]$ for random variables $X,Y$, and so one can use our results to study the rate of convergence in this metric. Certain problems, most famously stochastic Lorenz-type systems \cite{geurts2019lyapunov} have \textit{attractors} to which the solution trajectories will, with high probability, be close at most late times. For such problems we would expect to prove stronger notions of convergence, but defer this to future work.

\section{Invariance of the Jet Scheme} \label{sec:invariance}

Fix integers $n$ and $k$. Let $\mathcal{S}$ be the set of SDEs driven by $k$-dimensional Brownian motion on (a chart of) a manifold $M$ with $n$ local coordinates. In other words, $\mathcal{S}$ consists of tuples $a(x,t), b_1(x,t), \dots, b_k(x,t)$ where \[ a: \mathbb{R}^n \times \mathbb{R} \rightarrow \mathbb{R}^n, \] \[ b_\alpha: \mathbb{R}^n \times \mathbb{R} \rightarrow \mathbb{R}^n \] for $\alpha=1,2,\dots,k$, where we assume for simplicity that $a$ and the $b_\alpha$ are smooth.

Let $\Gamma$ be the set of fields of maps $\gamma(x, \delta t, \delta W)$ where \[ \gamma: \mathbb{R}^n \times \mathbb{R} \times \mathbb{R}^{k} \rightarrow \mathbb{R}^n. \]

A numerical scheme may be viewed as converting an SDE to a difference equation, which can then be solved to give a simulation of the SDE. More formally, 

\begin{definition} A \textit{numerical scheme} is a function $N: \mathcal{S} \rightarrow \Gamma$. \end{definition}

Now let $f: M \rightarrow M'$ be a diffeomorphism of manifolds. Then $f$ acts on vector fields on $M$ via the pushforward $f_{*}$. Writing this in local coordinates, we obtain an action $f_{*}: \mathcal{S} \rightarrow \mathcal{S}$. This action is given by the usual chain rule if $s \in \mathcal{S}$ is expressed in Stratonovich calculus - see Proposition 1.2.4 of \cite{hsu2002stochastic}. If It\^{o} calculus is used, then the action is given by It\^{o}'s lemma: \begin{align*} f_{*} a(x,t)^i &= \sum_{j=1}^n \frac{\partial f^i}{\partial x^j} a^j(x,t) + \frac{1}{2} \sum_{j,k=1}^n \sum_{\alpha=1}^k \frac{\partial^2 f^i}{\partial x_j \partial x_k} b^j_\alpha (x,t) b^k_\alpha (x,t), \\ f_{*} b_{\alpha}(x,t)^i &= \sum_{j=1}^n \sum_{\alpha=1}^k \frac{\partial f^i}{\partial x^j}b^j_\alpha (x,t). \end{align*}

We also get an action of $f$ on $\Gamma$ by \[ f \gamma(x,t,v) = f \circ \gamma(f^{-1}(x),t,v). \]

\begin{definition} We say that a numerical scheme $N$ is \textit{invariantly defined} if \[ f N(a,b_1, \dots, b_k) = N(f_{*}(a,b_1,\dots,b_k)) \] for every diffeomorphism $f: M \rightarrow M'$. \end{definition}

In other words, to say that a scheme is invariantly defined is to say that the diagram \[ \begin{tikzcd} \text{SDE on } M \arrow{r}{\text{scheme}} \arrow[swap]{d}{\text{It\^{o}'s lemma } } & \{ \gamma_x : x \in M \} \arrow{d}{\text{action of } f} \\ \text{SDE on } M' \arrow{r}{\text{scheme}} & \{ \gamma_y: y \in M' \} \end{tikzcd} \] commutes. The concept of being invariantly defined, as explained above, is a special case of the general definition of invariantly-defined elements given in terms of category theory, as explained in \cite{armstrong2018markowitz}. 

\begin{example} \label{example:eulernotinvariant} We show that the Euler-Maruyama scheme is \textbf{not} invariantly defined. Applied to the SDE $\ed X = X \ed W$ on $M=(0,\infty)$, the E-M scheme selects the field of maps $\gamma_x(\delta t, \delta W)=x+x \: \delta W$, which, under the transformation $y=\log(x)$, transforms to $\gamma_y(\delta t, \delta W)=\log(e^y + e^y \delta W)$. This is not the same as $\hat{\gamma}_y(\delta t, \delta W) = y - \frac{1}{2} \delta t + \delta W$ which is the result of applying the E-M scheme to the transformed SDE $ \ed Y = -\frac{1}{2} \ed t + \ed W$. \end{example}

\begin{theorem} \label{thm:transformscorrectly} In the case $\tilde{\gamma}=\gamma$, both the $(\delta W)^2$ and $(\delta t)$-jet scheme are invariantly defined. \end{theorem}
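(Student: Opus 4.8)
The plan is to unwind both jet schemes to their defining data --- a single $x$-dependent vector field $X(\cdot,t,v)$ on $M$ for each frozen pair $(t,v)$ --- and to show that (i) the assignment (SDE)~$\mapsto X(\cdot,t,v)$ is equivariant under diffeomorphisms, and (ii) the time-$1$ flow map is natural under diffeomorphisms. Combining (i) and (ii) yields exactly the commuting square in the definition of ``invariantly defined''.

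First I would record the structure of $\gamma$ from \Cref{lemma:gammadef}: for the $(\delta W)^2$-jet scheme, $\gamma(x,t,v)=\Phi(x,t,v,1)$ is the time-$1$ flow starting at $x$ of $X(\cdot,t,v)=\sum_\alpha v^\alpha b_\alpha(\cdot,t)+\tfrac1k\sum_\alpha(v^\alpha)^2\,\ol a(\cdot,t)$, and for the $(\delta t)$-jet scheme the same with $\tfrac1k\sum_\alpha(v^\alpha)^2$ replaced by $v^0$. The crucial observation is that, with $t$ and $v$ held fixed, the coefficients multiplying $b_\alpha$ and $\ol a$ are constants in $x$, so $X(\cdot,t,v)$ is an honest $\mathbb{R}$-linear combination of the vector fields $\ol a(\cdot,t),b_1(\cdot,t),\dots,b_k(\cdot,t)$ on $M$.

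Next I would invoke the equivariance of the It\^o--Stratonovich conversion. The scheme $N$ takes the It\^o data $(a,b_1,\dots,b_k)$, forms the Stratonovich drift $\ol a$, and builds $\gamma$ from $\ol a$ and the $b_\alpha$ regarded as vector fields. By Proposition 1.2.4 of \cite{hsu2002stochastic} (equivalently, because Stratonovich calculus obeys the chain rule), the Stratonovich drift of the pushed-forward SDE $f_*(a,b_1,\dots,b_k)$ is precisely $f_*\ol a$, with diffusion fields $f_*b_\alpha$; that is, the It\^o's-lemma action on $\mathcal S$ becomes ordinary pushforward of vector fields once one passes to Stratonovich form. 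Since $f_*$ is linear over scalars constant in $x$, it commutes with forming $X(\cdot,t,v)$, so the vector field attached to $f_*(a,b_1,\dots,b_k)$ is $f_*\bigl(X(\cdot,t,v)\bigr)$. I would then use naturality of flows: if $\Phi_s^{Z}$ is the time-$s$ flow of a vector field $Z$, then $\Phi_s^{\,f_*Z}=f\circ\Phi_s^{\,Z}\circ f^{-1}$ wherever both sides are defined (a one-line computation: differentiate $f\circ\Phi_s^{Z}\circ f^{-1}$ in $s$ and recognise $f_*Z$). Applying this with $Z=X(\cdot,t,v)$ and $s=1$ gives $N\bigl(f_*(a,b)\bigr)(x,t,v)=f\bigl(\gamma(f^{-1}(x),t,v)\bigr)=\bigl(f\,N(a,b)\bigr)(x,t,v)$, which is the required identity; the $(\delta t)$-jet case is word-for-word identical, since only the ($x$-constant) coefficients change.

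The main obstacle is conceptual rather than computational: one must carefully distinguish the three actions of $f$ in play --- the It\^o's-lemma action on $\mathcal S$, the pushforward of vector fields, and the action $f\gamma(x,t,v)=f\circ\gamma(f^{-1}(x),t,v)$ on $\Gamma$ --- and verify that the first two agree after the Stratonovich conversion, which is the content of the cited result from \cite{hsu2002stochastic}. A minor technical point is that flows need only be considered for $s\in[0,1]$ and the naturality identity is used only where both flows exist, which is automatic because the statement already presupposes that $\gamma$, hence $\Phi(\cdot,\cdot,\cdot,1)$, is defined; no completeness hypothesis is needed. Everything else --- linearity of pushforward over scalars and differentiating the conjugated flow --- is routine.
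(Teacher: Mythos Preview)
Your proposal is correct and follows essentially the same approach as the paper: both arguments rest on (a) the equivariance of the Stratonovich data $\ol a,b_\alpha$ under diffeomorphisms (via the chain rule for Stratonovich calculus, citing \cite{hsu2002stochastic}), (b) linearity of pushforward over $x$-constant scalars, and (c) naturality of flows. The paper compresses (b) and (c) into the slogan ``invariantly-defined operations on invariantly-defined objects yield invariantly-defined output'', whereas you spell out the conjugation identity $\Phi_s^{f_*Z}=f\circ\Phi_s^Z\circ f^{-1}$ explicitly; this is a difference in presentation, not in substance.
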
 \begin{proof} Let $\mathfrak{X}(M)$ denote the set of vector fields on $M$. We say that a map \[ V: \mathcal{S} \rightarrow \mathfrak{X}(M) \] is an invariantly-defined vector field if \[ f_{*} V(a,b_1,\dots,b_k) = V(f_{*} (a,b_1,\dots,b_k)) \] Since Stratonovich SDEs transform via the usual chain rule, it follows that $\ol{a}$ and the $b_\alpha$ are invariantly defined. The jet scheme works by computing the flow of a linear combination of $\ol{a}$ and the $b_\alpha$. Since invariantly-defined operations on invariantly-defined objects always result in invariantly-defined output, the result follows. \end{proof} 

Whilst we cannot approximate $\gamma$ perfectly, \cref{thm:transformscorrectly} provides a genuine benefit; it introduces geometric invariance, even though the original SDE \cref{eq:sdeIto} was defined in the ambient space $\mathbb{R}^n$ and not (a priori) in a coordinate-free manner. If the step size of the scheme is small, then $\delta t$ and $\delta W$ will both be small with high probability, and hence for \textit{any} reasonable high-order ODE scheme, the approximation $\tilde{\gamma}$ will have negligible error. Compared to ODEs, it is particularly important to avoid a large step size if one desires strong accuracy, since the true solution $X_t$ may depend on $W_s$ for \textit{all} $0 \leq s \leq t$. But in cases where a large step size is necessary, the reader might consider a so-called \textit{aromatic} ODE method. These methods, designed to be equivariant under certain classes of diffeomorphisms, and hence `almost' invariantly defined, are the subject of recent research; see (e.g.) \cite{munte16aromatic}.

A practical consequence of \cref{thm:transformscorrectly} is that jet schemes will perform well on any problem which is reduced to a simple form by a judicious choice of coordinates. For example, if $F: \mathbb{R} \rightarrow \mathbb{R}$ is a smooth invertible function with inverse $G$, and $\mu, \sigma$ are constants, then the $(\delta t)$-jet scheme (with $\tilde{\gamma}=\gamma$) will simulate the SDE \begin{equation} \label{eq:disguisedlineareqn} \ed Y_t = (\mu F'(G(Y_t)) + \frac{1}{2}\sigma^2 F''(G(Y_t))) \ed t + F'(G(Y_t))\sigma \ed W \end{equation} perfectly, because substituting $Y=F(X)$ leads to the trivial SDE $\ed X_t = \mu \ed t + \sigma \ed W_t$. Informally, the $(\delta t)$-jet scheme can `see' the required substitution, whilst schemes based purely on truncating the stochastic Taylor series for an SDE will not be invariantly defined, and hence will fail to do so, unless $F$ is suitably chosen. Equations of the type \cref{eq:disguisedlineareqn} arise in practice: one example is geometric Brownian motion, which arises in the Black-Scholes model in financial mathematics \cite{black1973pricing} \cite{merton1969lifetime}. More generally, if an SDE is equivalent to a Brownian motion on $\mathbb{R}^n$ via a diffeomorphism then the $(\delta t)$-jet scheme will be exact.

\section{Application to the Kepler Problem with Noise} \label{sec:examples} Consider a particle moving in $\mathbb{R}^2$ under the influence of a single force directed towards the origin. Take coordinates $(r,p,\theta, \phi)$, where $p=\dot{r}$ and $\phi=\dot{\theta}$. Then the Lagrangian of this system is given by $L = \frac{1}{2}(p^2 + r^2 \phi^2) + V(r)$ where negative $V$ is the potential. We add noise to the system, obtaining dynamics expressed in It\^{o} form as \begin{align} \label{eq:keplersystem1} \ed r &= \left(p + \frac{1}{2}\xi_1(r) \xi_1'(r) \right) \: dt + \xi_1(r) \: \ed W_t^1, \\ \label{eq:keplersystem2} \ed p &= (-V'(r) + r \phi^2) \: \ed t, \\ \label{eq:keplersystem3} \ed \theta &= \phi \: \ed t + \xi_2(r) \: \ed W_t^2, \\ \label{eq:keplersystem4} \ed\phi &= \left(\frac{-2\phi p - \phi \xi_1(r) \xi_1'(r)}{r}+\frac{3\phi \xi_1^2(r)}{r^2} \right) \: dt - \frac{2 \phi \xi_1(r)}{r} \: \ed W_t^1. \end{align}

Here, $\xi_1$ and $\xi_2$ are user-chosen functions that specify the amount of noise in the model. It is straightforward to show that the angular momentum $h = r^2 \dot{\theta}$ is conserved in this system, and that if the $\xi_i$ are identically zero, then we recover the classical Kepler dynamics. The Stratonovich drift (for use in the jet scheme) is \[ \ol{a}(r,p,\theta,\phi) = \left[p, \: -V'(r) + r \phi^2, \: \phi, \: \frac{-2\phi p}{r}\right]^T \]

We motivate the system \cref{eq:keplersystem1}-\cref{eq:keplersystem4} with the following formal calculation. Taking the Legendre transform of $L$ yields the Hamiltonian $H = \frac{1}{2}p^2 + \frac{1}{2}\frac{h^2}{2r^2} - V(r)$. We perturb the generalised momenta in $H$, writing \begin{equation*} \ed H_{\text{perturbed}} = H \: \ed t + p \: \xi_1 \circ dW^1_t + h \: \xi_2 \circ dW^2_t. \end{equation*} Applying Hamilton's equations and translating back to $(r,p,\theta,\phi)$ coordinates yields the dynamics above. The above procedure is an example of stochastic advection by lie transport (SALT), a concept originating in fluid dynamics and intended to preserve the physics of the underlying system. For more details, see \cite{drivas2019lagrangian}. 

In a numerical experiment, we chose $V(r)=-\frac{1}{r}$, representing a planet moving under the influence of the sun. We simulated a Brownian path, and then found numerical solutions to the system \cref{eq:keplersystem1}-\cref{eq:keplersystem4} using first the Euler-Maruyama (E--M) scheme, and then the $(\delta t)$-jet scheme, with an 8th order Adams method to solve the ODE. The left and right halves of Figure 1 respectively show a plot of $r$ against $\theta$ for the E--M and jet schemes in the case $\xi_1=0.05$, $\xi_2 = 0.25$ (for all $r$), $(r_0,p_0,\theta_0,\phi_0)=(1,0.2,1,1.2)$, over a time period of $[0,10]$. A variety of step sizes were chosen, as indicated by the labels on the plot. For a very small step size $\delta t = 0.01$, the two schemes give very similar answers. However, as the step size increases, the E--M scheme starts giving qualitatively incorrect answers, whilst the accuracy of the jet scheme degrades only very slightly, with the four trajectories finishing in nearly the same place.

\begin{figure}[h!tbp] \label{figure:keplerproblem} \begin{center} \includegraphics[width=\textwidth]{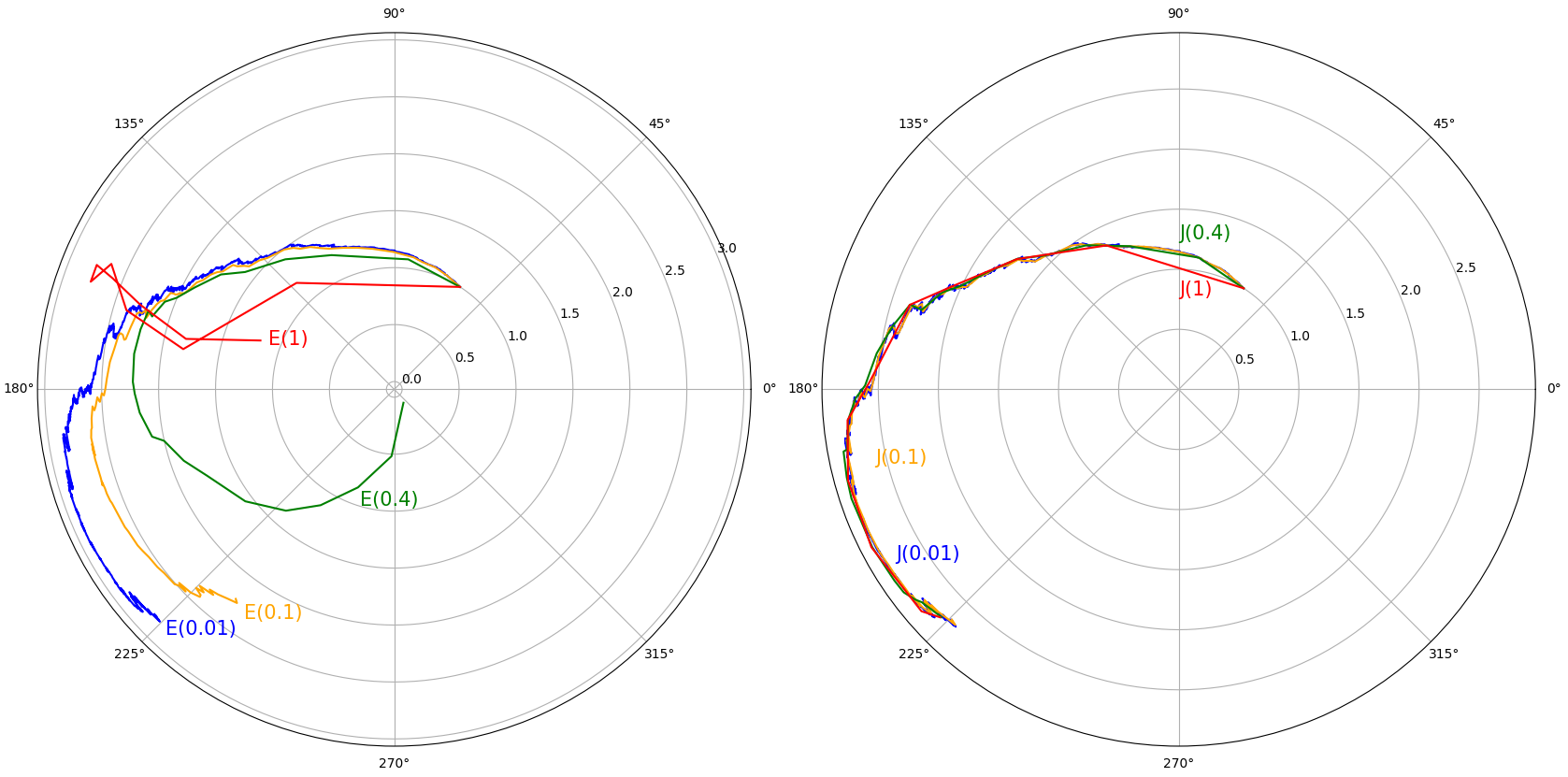} \end{center} \caption{Simulation of the Kepler problem with noise. On the left, the paths of the Euler scheme with $\delta t=1,0.4,0.1,0.01$, labelled $E(1)$, $E(0.4)$, $E(0.1)$ and $E(0.01)$ respectively. On the right, the jet scheme with the same step lengths, labelled similarly.} \end{figure}

We also tracked the value of the angular momentum $h$ estimated by the two schemes. At early times, both schemes calculated a value of $h$ close to the correct value $h=1.2$. But by the time we reach $t=10$, only the jet scheme maintains the correct value, as shown in Table 1.

\begin{table}[h!tbp] \label{table:kepler} \centering \begin{tabular}{lllc} \hline Step length & Scheme & Estimate of $h$ at $t=10$ & \\ \hline 1 & Euler--Maruyama & -0.022 \\ & Jet & 1.200004 \\ 0.4 & Euler--Maruyama & 0.023 \\ & Jet & 1.20001 \\ 0.1& Euler--Maruyama & 1.145 \\ & Jet & 1.20001 \\ 0.01 & Euler--Maruyama & 1.212 \\ & Jet & 1.200001 \\ \hline \end{tabular} \caption{Comparing the value of the angular momentum $h$ for the Euler and jet schemes.} \end{table}%

The very good performance of the jet scheme in this example is partially explained by the fact that since $\xi_1$ and $\xi_2$ were chosen to be constant, the system \cref{eq:keplersystem1}-\cref{eq:keplersystem4} is diffeomorphic (via $h=r^2 \phi$) to an SDE with additive noise. We modified the experiment so that this would no longer be the case by choosing $\xi_1(r)=0.05(1+\sin^2(r))$ and $\xi_2(r)=0.25(1+\sin^2(r))$. The plots in this case (not shown) of $r$ against $\theta$ were qualitatively similar to Figure 1. As we increased the step size for the jet scheme, the trajectories separated only to a slightly greater extent than in Figure 1. By contrast the E--M scheme again gave qualitatively incorrect answers for larger step sizes, and failed to fully converge even with $\delta t = 0.01$. This provides evidence for our suggestion in the introduction that the jet scheme should perform well on problems which are small perturbations of a more simple system.

\section{A General Result on the Convergence of Numerical Schemes} \label{sec:workhorse} Our aim in the next few sections is to prove \cref{thm:convergenceOnManifolds}. We begin by introducing $O_{\mathcal{F}}$ notation. This will allow us to write out proofs more cleanly, avoiding the need to write out minor variations of the same argument several times.

\begin{definition}[$O_{\cal F}$ notation] Let $\Delta_T=\{(t_1,t_2) : 0 \leq t_1 \leq t_2 \leq T \}$, and let $(\Omega,\mathcal{F},(\mathcal{F}_t: t \in [0,T]),\mathbb{P})$ be a filtered probability space. Suppose that we have two families of maps \[ f(t_1,t_2,\omega): \Delta_T \times \Omega \rightarrow \mathbb{R}^n, \] \[ g(t_1,t_2,\omega): \Delta_T \times \Omega \rightarrow \mathbb{R}^n \] such that, for each $t_1,t_2$, both $f(t_1,t_2,\cdot)$ and $g(t_1,t_2,\cdot)$ are random variables $\Omega \rightarrow \mathbb{R}^n$. We shall say that $f=O_{\mathcal{F}}(g)$ if, for every $q \in \mathbb{N}$ there exist constants $\epsilon_q$ and $C_q$ with the following property. For all $0 \leq t_1 \leq t_2 \leq T$ such that $t_2-t_1 \leq \epsilon_q$, we have \[ \mathbb{E}[ \: |f|^{2q} \mid \mathcal{F}_{t_1}] \leq C_q \mathbb{E}[ \: |g|^{2q} \mid \mathcal{F}_{t_1}] \] almost surely. If, in addition, $f$ satisfies $\mathbb{E}[f(t_1,t_2,\omega) \mid \mathcal{F}_{t_1}]=0$ for all $t_1,t_2$, we shall write $f=O^0_{\mathcal{F}}(g)$. \end{definition} \begin{example} If $a$ and $b$ are Lipschitz and $(Y_t: t \in [0,T])$ is a stochastic process adapted to $\mathcal{F}_t$ then, for a partition $0=t_0 \leq t_1 \leq \dots \leq t_N=T$ \[ a(Y_{t_{i}})(t_{i+1}-t_{i}) = O_\mathcal{F}(Y_{t_i}(t_{i+1}-t_{i})), \] \[ b(Y_{t_{i}})(W_{t_{i+1}}-W_{t_{i}}) = O^0_\mathcal{F}(Y_{t_i}(t_{i+1}-t_{i})^{1/2}). \] In what follows, we may write expressions such as $a(Y)\delta t = O(Y \delta t)$ and $b(Y)=O_\mathcal{F}^0(Y (\delta t)^{1/2})$ for brevity. \end{example}

\cref{theorem:strongworkhorse}, stated below, provides a sufficient condition for two numerical schemes to be close in the $\mathcal{L}^p$ sense. The proof of this result, relying on a combination of Gr\"{o}nwall's inequality and properties of martingales, is deferred to the appendix. The proof is similar to known arguments in \cite{kloeden2013numerical}, but encapsulates the essential details therein into a single reusable theorem. The result should be applicable to a wide class of potential numerical schemes; we demonstrate in the next two sections how \cref{theorem:strongworkhorse} may be applied repeatedly to deduce the convergence of the jet scheme. \begin{theorem} \label{theorem:strongworkhorse} Let $f$ and $\ol{f}$ be two functions \[ f,\ol{f}:\Delta_T \times \mathbb{R}^n \times \Omega \to \mathbb{R}^n. \] Let $0=t_0 \leq t_1 \leq \ldots \leq t_N=T$ be a discretization of $[0,T]$. Define $\delta t = \max_{0\leq i < N} \{ \tau_{i+1}-\tau_{i} \}$. Given $Y_0$ define sequences $Y_i$ and ${\ol Y}_i$ by $Y_0=\ol{Y}_0$, \[ Y_i(\omega) = Y_{i-1}(\omega) + f(\tau_{i-1}, \tau_i, Y_{i-1}, \omega ) \] and \[ {\ol Y}_i(\omega) = {\ol Y}_{i-1}(\omega) + \ol{f}(\tau_{i-1}, \tau_i, {\ol Y}_{i-1}, \omega ). \] Suppose that \begin{equation} f(t_1,t_2,Y,\omega)-\ol{f}(t_1,t_2,Y,\omega)=O_{\cal F}( (1 + |Y|) (\delta t)^{\gamma+1}) + O^0_{\cal F}( (1 + |Y|) (\delta t)^{\gamma+\frac{1}{2}}) \label{eq:yclosetoybar} \end{equation} for some $\gamma \geq 0$ and that \begin{equation} \ol{f}(t_1,t_2,Y,\omega)-\ol{f}(t_1,t_2,\ol{Y},\omega)=O_{\cal F}( (Y-\ol{Y}) (\delta t) ) + O^0_{\cal F}( (Y-\ol{Y}) (\delta t)^{\frac{1}{2}} ) \label{eq:lipschitsfbar} \end{equation} and that either \begin{equation} E( \max_{0\leq i \leq N} |Y_i|^{2q} ) \leq K \label{eq:ySquaredBound} \end{equation} or \begin{equation} E( \max_{0\leq i \leq N} |\ol{Y}_i|^{2q} ) \leq K. \label{eq:olYSquaredBound} \end{equation} Then \[ E\left(\max_{0\leq i \leq N} |Y_i-\ol{Y_i}|^{2q} \right) \leq K (\delta \tau)^{2 \gamma q} \] with $K$ a constant independent of the discretization $t_i$. \end{theorem}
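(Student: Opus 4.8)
\emph{Step 1: the error recursion.} The plan is to derive a closed recursive inequality for $Z_i := \E[\max_{0\le l\le i}|Y_l-\ol Y_l|^{2q}]$ and close it with the discrete Gr\"onwall inequality, in the manner of the classical SDE convergence proofs but organised around the $O_\mathcal{F}$/$O^0_\mathcal{F}$ calculus. Writing $e_i = Y_i-\ol Y_i$ we have $e_0=0$ and
\[
e_i = e_{i-1} + \bigl[f(\tau_{i-1},\tau_i,Y_{i-1},\omega)-\ol f(\tau_{i-1},\tau_i,Y_{i-1},\omega)\bigr] + \bigl[\ol f(\tau_{i-1},\tau_i,Y_{i-1},\omega)-\ol f(\tau_{i-1},\tau_i,\ol Y_{i-1},\omega)\bigr].
\]
By \eqref{eq:yclosetoybar} the first bracket is $O_\mathcal{F}\bigl((1+|Y_{i-1}|)(\tau_i-\tau_{i-1})^{\gamma+1}\bigr)+O^0_\mathcal{F}\bigl((1+|Y_{i-1}|)(\tau_i-\tau_{i-1})^{\gamma+\frac12}\bigr)$ and by \eqref{eq:lipschitsfbar} the second is $O_\mathcal{F}\bigl(e_{i-1}(\tau_i-\tau_{i-1})\bigr)+O^0_\mathcal{F}\bigl(e_{i-1}(\tau_i-\tau_{i-1})^{\frac12}\bigr)$. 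Telescoping, $e_i=\sum_{j=1}^i(a_j+m_j)$ where $a_j$ collects the two $O_\mathcal{F}$ (``drift'') summands and $m_j$ the two $O^0_\mathcal{F}$ (``martingale'') summands; each $m_j$ is $\mathcal F_{\tau_j}$-measurable with $\E[m_j\mid\mathcal F_{\tau_{j-1}}]=0$, so $S_l:=\sum_{j\le l}m_j$ is a discrete-time martingale. An induction on $i$ using the $O_\mathcal{F}$ bounds and whichever of \eqref{eq:ySquaredBound}, \eqref{eq:olYSquaredBound} holds shows each $Z_i$ is finite, which legitimises the Gr\"onwall step.

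\emph{Step 2: the consistency contribution.} Write $\delta t=\max_i(\tau_{i+1}-\tau_i)$ and let $N$ be the number of steps; we use that $N\,\delta t$ is bounded (automatic for quasi-uniform, in particular uniform, meshes). For the drift summands coming from \eqref{eq:yclosetoybar}, bound $\max_l\bigl|\sum_{j\le l}(\cdots)\bigr|^{2q}\le N^{2q-1}\sum_{j=1}^N|\cdots|^{2q}$ by the power-mean inequality, take expectations and pass to conditional expectations given $\mathcal F_{\tau_{j-1}}$, use \eqref{eq:ySquaredBound} to bound $\E[(1+|Y_{j-1}|)^{2q}]$, and note $\sum_j(\tau_j-\tau_{j-1})^{2q(\gamma+1)}\le(\delta t)^{2q(\gamma+1)-1}T$; this yields a bound of the form (a bounded power of $N\delta t$)$\cdot(\delta t)^{2\gamma q}$. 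For the martingale summands, first apply Doob's $L^{2q}$ maximal inequality and the Burkholder--Davis--Gundy inequality, $\E[\max_l|S_l|^{2q}]\le C_q\,\E\bigl[(\sum_j|m_j|^2)^q\bigr]\le C_qN^{q-1}\sum_j\E[|m_j|^{2q}]$, then estimate as before. The net contribution of the \eqref{eq:yclosetoybar}-terms to $Z_i$ is thus at most $K_1(\delta t)^{2\gamma q}$. When only \eqref{eq:olYSquaredBound} is available, write $1+|Y_{j-1}|\le 1+|\ol Y_{j-1}|+|e_{j-1}|$: the $1+|\ol Y_{j-1}|$ piece is treated as above, and the $|e_{j-1}|$ piece, times $(\tau_j-\tau_{j-1})^{\gamma+1}$ (resp.\ $(\tau_j-\tau_{j-1})^{\gamma+\frac12}$), is --- since $\gamma\ge0$ --- dominated up to a constant by the Lipschitz terms of Step 3 and folded into them.

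\emph{Step 3: the Lipschitz contribution and conclusion.} The \eqref{eq:lipschitsfbar}-terms are handled with the same two inequalities, but retaining one factor $(\tau_j-\tau_{j-1})$ per summand instead of bounding it by $\delta t$: the drift summands give $\le C\,N^{2q-1}\sum_{j\le i}(\tau_j-\tau_{j-1})^{2q}\E[|e_{j-1}|^{2q}]\le C'\sum_{j\le i}(\tau_j-\tau_{j-1})Z_{j-1}$ and the martingale summands similarly. Adding the two contributions,
\[
Z_i \le K_1(\delta t)^{2\gamma q} + K_2\sum_{j=1}^i(\tau_j-\tau_{j-1})\,Z_{j-1},
\]
and since $\sum_j(\tau_j-\tau_{j-1})=T$ the discrete Gr\"onwall inequality gives $Z_N\le K_1(\delta t)^{2\gamma q}e^{K_2 T}$, i.e.\ the claimed bound with $K=K_1e^{K_2 T}$ independent of the discretisation.

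\emph{Main obstacle.} The only genuinely delicate ingredient is moving the maximum over $l$ inside the expectation: a naive triangle-inequality estimate of $S_l$ loses too much, so the discrete Burkholder--Davis--Gundy/Doob inequalities are needed to turn $\E[\max_l|S_l|^{2q}]$ into a sum of per-step moments. The secondary nuisance is the bookkeeping when only \eqref{eq:olYSquaredBound} holds, where the surplus $|e_{j-1}|$ term arising from $1+|Y_{j-1}|$ must be re-routed into the Lipschitz bucket via $\gamma\ge0$; the rest is routine.
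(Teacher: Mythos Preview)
Your proposal is correct and follows essentially the same route as the paper. The paper's proof likewise telescopes $Y_i-\ol Y_i$ into a consistency part (from \eqref{eq:yclosetoybar}) and a Lipschitz part (from \eqref{eq:lipschitsfbar}), handles the $O_\mathcal{F}$ summands with the power-mean inequality $|\sum a_r|^{2q}\le N^{2q-1}\sum|a_r|^{2q}$ and the $O^0_\mathcal{F}$ summands with Doob's maximal inequality plus Burkholder--Davis--Gundy (packaged there as a separate lemma and corollary), and then closes with discrete Gr\"onwall; the alternative case \eqref{eq:olYSquaredBound} is dealt with by writing $|Y_j|^{2q}\le K(|Y_j-\ol Y_j|^{2q}+|\ol Y_j|^{2q})$ and absorbing the extra error term into the Gr\"onwall sum, which is your ``fold into the Lipschitz bucket via $\gamma\ge 0$'' at the level of $2q$-th moments rather than increments. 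Your explicit remark that $N\,\delta t$ bounded (quasi-uniformity) is being used is accurate: the paper invokes ``$N\le K/\delta t$'' in its corollary without comment, so you are in fact slightly more careful here.
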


\section{Proof of Strong Convergence} \label{sec:strongconvergence} In this section we apply \cref{theorem:strongworkhorse} repeatedly in order to prove the strong convergence result \cref{eq:mainresstrong}.

We introduce the notation for this section. Let $(W_t: t \in [0,T])$ be a Brownian motion in $\mathbb{R}^k$, and write $W_t^\alpha$ for its one-dimensional components. Let $(\mathcal{F}_t: t \in [0,T])$ be the natural filtration to which $W_t$ is adapted. We define a number of functions \[ f^*(t_1,t_2,Y,\omega): \Delta_T \times \mathbb{R}^n \times \Omega \rightarrow \mathbb{R}^n. \] These are \[ f^E(t_1,t_2, y, \omega) = a(x,t) \delta t + b_{\alpha}(x,t) \, \delta W^\alpha_t, \] \[ f^\gamma(t_1,t_2, y, \omega) = \gamma(y,t,\delta W_t)-y, \] \[ f^{\tilde\gamma}(t_1,t_2, y, \omega) = \tilde{\gamma}(y,t,\delta W_t)-y, \] \[ f^{(m)}(t_1,t_2, y, \omega) = {\gamma_m}(y,t,\delta W_t)-y, \] \[ f^{c}(t_1,t_2, y, \omega) = 0. \] As usual, $\delta t = t_2-t_1$ and $\delta W_t = W_{t_2}-W_{t_1}$.

Now let $0=t_0 \leq t_1 \leq \dots \leq t_N=T$ be a discretisation of $[0,T]$. Associated to each $f^\star$ and $Y_0 \in \mathbb{R}^n$, we define sequences of random variable $(Y^\star_i)_{i=1}^N$ by \[ Y_{i}^\star = Y_{i-1}^\star + f^\star (t_{i-1},t_{i},Y_{i-1},\omega) \] with initial condition $Y^\star_0=Y_0$. Thus the $Y^\star$ are the iterates of the Euler--Maruyama scheme, the jet scheme with a perfect ODE solver, the jet scheme with an imperfect ODE solver, the order $r$ expansion jet scheme, and the `constant scheme' $Y^c_i=Y_0$.

\begin{proposition} \label{prop:tildegammaforexpansionscheme} Suppose that $\gamma$ satisfies \cref{propertyrthDerivative} for some particular value $r=r_0+1$. Then the order $r_0$ expansion jet scheme is $(r_0+1)$-good. \end{proposition}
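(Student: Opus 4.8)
The plan is to recognise $\tilde{\gamma}_{(r_0)}(x,v,t)$ as the degree-$r_0$ Taylor polynomial in $v$ of the map $v \mapsto \gamma(x,t,v)$ about $v = 0$, and then to bound the Taylor remainder using \cref{propertyrthDerivative}. Fix $x \in \mathbb{R}^n$ and $t \in [0,T]$. Since \cref{propertyrthDerivative} with $r = r_0+1$ presupposes that all $v$-derivatives of $\gamma$ up to order $r_0+1$ exist and are controlled, the map $v \mapsto \gamma(x,t,v)$ is $(r_0+1)$-times continuously differentiable, and the multivariate Taylor theorem with integral-form remainder gives
\[
\gamma(x,t,v) = \gamma(x,t,0) + \sum_{i=1}^{r_0}\sum_{|\alpha|=i}\frac{v^\alpha}{\alpha!}\,\partial_v^\alpha\gamma(x,t,0) + R(x,t,v),
\]
\[
R(x,t,v) = (r_0+1)\sum_{|\alpha|=r_0+1}\frac{v^\alpha}{\alpha!}\int_0^1 (1-s)^{r_0}\,\partial_v^\alpha\gamma(x,t,sv)\,\ed s.
\]
Because $\gamma(x,t,0) = x$, the first two terms on the right are precisely $\tilde{\gamma}_{(r_0)}(x,v,t)$ as defined in \cref{eq:orderrexpansionjetscheme}, so that $\tilde{\gamma}_{(r_0)}(x,v,t) - \gamma(x,t,v) = -R(x,t,v)$ and it only remains to estimate $|R|$.

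For the estimate I would combine three elementary facts: for each multi-index with $|\alpha| = r_0+1$ we have $|v^\alpha| \le |v|^{|\alpha|} = |v|^{r_0+1}$; along the segment joining $0$ to $v$ we have $|sv| \le |v|$, so \cref{propertyrthDerivative} yields $|\partial_v^\alpha\gamma(x,t,sv)| \le K_\alpha(1+|x|)e^{K_\alpha(1+|sv|^2)} \le K_\alpha(1+|x|)e^{K_\alpha(1+|v|^2)}$; and the index set $\{\alpha : |\alpha| = r_0+1\}$ is finite while $\int_0^1(1-s)^{r_0}\,\ed s = 1/(r_0+1)$. Absorbing the number of multi-indices, the coefficients $1/\alpha!$, and $\max_{|\alpha| = r_0+1} K_\alpha$ into a single constant $K$, this gives
\[
|\tilde{\gamma}_{(r_0)}(x,v,t) - \gamma(x,t,v)| = |R(x,t,v)| \le K\,|v|^{r_0+1}\,e^{K(1+|v|^2)}(1+|x|),
\]
which is exactly the assertion that $\tilde{\gamma}_{(r_0)}$ is an $(r_0+1)$-good approximation to $\gamma$.

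There is no serious obstacle: the argument is a routine Taylor estimate. The only points needing a little care are, first, to use the integral (Bochner) form of the remainder rather than the mean-value form, since the latter fails componentwise for an $\mathbb{R}^n$-valued function; and second, to note that the exponential factor $e^{K(1+|v|^2)}$ appearing in \cref{propertyrthDerivative} is monotone in $|v|$, which is what allows the pointwise bounds on $\partial_v^\alpha\gamma(x,t,sv)$ to be integrated in $s$ while keeping the same exponential in the final estimate. Everything else is bookkeeping of finitely many constants.
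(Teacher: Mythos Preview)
Your proof is correct and follows essentially the same route as the paper: both expand $\gamma$ in a Taylor series about $v=0$ and bound the order-$(r_0+1)$ remainder via \cref{propertyrthDerivative}. The only difference is that you use the integral form of the remainder while the paper applies the Lagrange form componentwise (each $\gamma^i$ with its own intermediate point $\xi_i \in [0,1]$), so your caution that the mean-value form ``fails componentwise'' is unnecessary here---it works fine component by component---though the integral form is equally valid and arguably tidier.
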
 \begin{proof} Fix $x \in \mathbb{R}^n$ and $t \in [0,T]$, and let $v \in \mathbb{R}^k$ vary. Each component $\gamma^i$ and $\gamma_m^i$ of the vectors $\gamma(x,t,v)$ and $\gamma_{r_0}(x,t,v)$ is then a function from $\mathbb{R}^k$ to $\mathbb{R}$. Therefore we may apply the multivariate Taylor expansion with Lagrange remainder to obtain \[ \gamma^i - \gamma_{r_0}^i = \sum_{|\alpha|=r_0+1} v^\alpha \frac{1}{\alpha!} \partial^\alpha_v \gamma^i(\xi v) \] for some $\xi \in [0,1]$. Hence \[ |\gamma^i - \gamma_{r_0}^i| \leq K| \sum_{\alpha} v^\alpha \partial^\alpha_v \gamma^i(\xi v)| \leq K|v|^{r_0}| \sum_{\alpha} \partial^\alpha_v \gamma^i(\xi v)|. \] \cref{propertyrthDerivative} then gives the required bound on the components of $\gamma-\gamma_{r_0}$, and hence on the vector itself. \end{proof}

We need an elementary lemma about the normal distribution; see the appendix for a proof.

\begin{lemma} \label{lemma:expectationofnormal} If $v \in \mathbb{R}^k$ is a multivariate normal vector with mean 0 and covariance matrix $I(\delta t)$ then for any $a>0,K \geq 0$ there exists constants $\epsilon_{a,K,k}$, $C_{a,K,k}$ for which \[ \mathbb{E}[|v|^a e^{K(1+|v|^2)} ] \leq C_{a,K,k} (\delta t)^{a/2} \hspace{10pt} (*) \] whenever $\delta t \leq \epsilon_{a,K,k}$. \end{lemma}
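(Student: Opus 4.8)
The plan is to reduce the statement to a single, step-size-independent Gaussian integral by rescaling. Write $v = \sqrt{\delta t}\, Z$, where $Z$ is a standard $k$-dimensional normal vector (mean $0$, covariance $I$); this is legitimate since each component $v^\alpha$ is $N(0,\delta t)$ and the components are independent. Then $|v|^a = (\delta t)^{a/2}|Z|^a$ and $|v|^2 = \delta t\,|Z|^2$, so
\[ \mathbb{E}\!\left[|v|^a e^{K(1+|v|^2)}\right] = e^{K}(\delta t)^{a/2}\,\mathbb{E}\!\left[|Z|^a e^{K\delta t\,|Z|^2}\right]. \]
It therefore suffices to bound $\mathbb{E}[|Z|^a e^{K\delta t|Z|^2}]$ by a constant depending only on $a$, $K$, $k$ (and not on $\delta t$), for all sufficiently small $\delta t$.

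If $K=0$ this expectation is just the finite absolute moment $\mathbb{E}[|Z|^a]$ and nothing more is needed, so assume $K>0$ and set $\epsilon_{a,K,k} = 1/(4K)$. For $\delta t \le \epsilon_{a,K,k}$ we have $K\delta t \le 1/4$, and writing the expectation against the standard Gaussian density gives
\[ \mathbb{E}\!\left[|Z|^a e^{K\delta t|Z|^2}\right] = \frac{1}{(2\pi)^{k/2}}\int_{\mathbb{R}^k} |z|^a e^{-\left(\frac12 - K\delta t\right)|z|^2}\,\ed z \;\le\; \frac{1}{(2\pi)^{k/2}}\int_{\mathbb{R}^k} |z|^a e^{-|z|^2/4}\,\ed z \;=:\; C'_{a,k} < \infty. \]
The integral on the right is finite --- passing to spherical coordinates it is a constant multiple of $\int_0^\infty \rho^{a+k-1}e^{-\rho^2/4}\,\ed\rho$, which converges for every $a>0$ --- and, crucially, it does not depend on $\delta t$. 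Combining the two displays yields $\mathbb{E}[|v|^a e^{K(1+|v|^2)}] \le e^{K}C'_{a,k}(\delta t)^{a/2}$, which is the claim with $C_{a,K,k} = e^{K}C'_{a,k}$.

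There is essentially no obstacle here; the one point to get right is that the extra factor $e^{K\delta t|z|^2}$ produced by the rescaling must not overwhelm the Gaussian decay $e^{-|z|^2/2}$, and this is precisely why the bound is asserted only for $\delta t$ below a threshold: the choice $\epsilon_{a,K,k}=1/(4K)$ leaves a fixed negative-definite quadratic form in the exponent, so the remaining integral collapses to a finite constant. Any threshold strictly below $1/(2K)$ would work, at the cost of a larger constant $C_{a,K,k}$.
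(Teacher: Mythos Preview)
Your proof is correct. Both your argument and the paper's rest on the same core observation: for $\delta t$ below a threshold depending on $K$, the factor $e^{K|v|^2}$ can be absorbed into the Gaussian density, leaving a convergent integral that scales as $(\delta t)^{a/2}$. The executions differ in a way worth noting. The paper first reduces to the one-dimensional case via the inequality $|v|^{2a} \le C\sum_i v_i^{2a}$ and the factorisation $e^{K|v|^2}=\prod_i e^{Kv_i^2}$, exploiting independence of the components to bound cross-factors by a constant; it then computes the one-dimensional integral explicitly in terms of a Gamma function. Your rescaling $v=\sqrt{\delta t}\,Z$ is more direct: the factor $(\delta t)^{a/2}$ drops out immediately, and the remaining $k$-dimensional integral is handled in one stroke without any reduction to $k=1$. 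Your route is cleaner and avoids the coordinate-wise bookkeeping; the paper's route gives a slightly more explicit constant.
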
 

\begin{proposition} \label{prop:ofbounds} Let $Y_t$ and $\ol{Y}_t$ be processes adapted to $\mathcal{F}_t$. Then: \begin{enumerate}[(i)] \item Provided that $\tilde{\gamma}$ is $m$-good, we have \[ f^\gamma(t_1,t_2,Y_{t_1},\omega) - f^{\tilde \gamma}(t_1,t_2,Y_{t_1},\omega) =O_\mathcal{F}((1+|Y_{t_1}|)(\delta t)^{m/2}). \] \item If $\gamma$ has \cref{propertyrthDerivative} for a particular value $r=r_0$ then \[ f^{\gamma}(t_1,t_2,Y_{t_1},\omega) - f^{(r_0)}(t_1,t_2,Y_{t_1},\omega) = O_{\mathcal{F}}((1+|Y_{t_1}|)(\delta t)^{\frac{r_0}{2}}). \] \item If $\gamma$ has \cref{propertyFirstTwoDerivs,propertyrthDerivative} when $r=2$, the order $2$ expansion jet scheme satisfies \[ f^E(t_1,t_2,Y_{t_1},\omega)-f^{(2)}(t_1,t_2,Y_{t_1},\omega) = O^0_{\mathcal{F}}((1+|Y|)(\delta t)^{1}) + O_{\mathcal{F}}((1+|Y|)(\delta t)^{\frac{3}{2}}). \] \item The Euler scheme satisfies \[ f^E(t_1,t_2,Y,\omega)-f^E(t_1,t_2,\ol{Y}) = O_\mathcal{F}(|Y_{t_1}-\ol{Y}_{t_1}|(\delta t)) + O^0_\mathcal{F}(|Y_{t_1}-\ol{Y}_{t_1}|(\delta t)^{1/2}). \] \item If $\gamma$ has \cref{propertyLipschitzDerivs}, then \[ f^{\gamma}(t_1,t_2,Y,\omega)-f^{\gamma}(t_1,t_2,\ol{Y},\omega) = O^0_\mathcal{F}(|Y_{t_1}-\ol{Y}_{t_1}|(\delta t)^{1/2}) + O_\mathcal{F}(|Y_{t_1}-\ol{Y}_{t_1}|(\delta t)). \] \end{enumerate} \end{proposition}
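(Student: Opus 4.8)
The five bounds all follow one template, and I would set it up once. For each I first derive a pointwise estimate of the form $|f-\ol f|\le C\,(1+|Y_{t_1}|)\,|\delta W|^{p}e^{K(1+|\delta W|^2)}$ (or the same with $|Y_{t_1}-\ol Y_{t_1}|$ in place of $1+|Y_{t_1}|$), using the relevant regularity property of $\gamma$, the Lipschitz hypothesis \cref{assumptionLipschitz}, or Taylor's theorem; then I take conditional $2q$-th moments. Since $\delta W=W_{t_2}-W_{t_1}$ is independent of $\mathcal{F}_{t_1}$ and $N(0,I\,\delta t)$-distributed, the $\mathcal{F}_{t_1}$-measurable prefactor pulls out of $\mathbb{E}[\,\cdot\mid\mathcal{F}_{t_1}]$ and \cref{lemma:expectationofnormal} converts the remaining $\mathbb{E}[|\delta W|^{2pq}e^{2qK(1+|\delta W|^2)}]$ into $C_q(\delta t)^{pq}$; comparing with $\mathbb{E}[|g|^{2q}\mid\mathcal{F}_{t_1}]$ for the claimed majorant $g$ yields the stated $O_{\mathcal{F}}$ bound. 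For an $O^0_{\mathcal{F}}$ claim I additionally display the leading term as a sum of products of an $\mathcal{F}_{t_1}$-measurable coefficient with one of $\delta W^\alpha$, $(\delta W^\alpha)^2-\delta t$, or $\delta W^\alpha\delta W^\beta$ ($\alpha\ne\beta$), each independent of $\mathcal{F}_{t_1}$ with mean $0$, so the conditional mean vanishes.

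With this in place, parts (i), (ii), (iv) are short. Part (i) is precisely the definition of $m$-goodness, which is a pointwise bound of the required shape with $p=m$. For (ii) I write $\gamma-\gamma_{r_0}=(\gamma-\gamma_{r_0-1})-(\gamma_{r_0}-\gamma_{r_0-1})$ and apply Taylor's theorem with Lagrange remainder at order $r_0$: both differences are sums of terms $v^\alpha\partial^\alpha_v\gamma/\alpha!$ with $|\alpha|=r_0$, evaluated at $\xi\delta W$ and at $0$ respectively, which \cref{propertyrthDerivative} for $r=r_0$ bounds by $K(1+|Y|)|\delta W|^{r_0}e^{K(1+|\delta W|^2)}$ (this is the $p=r_0$ case; equivalently one may quote \cref{prop:tildegammaforexpansionscheme}, which already shows $\gamma_{r_0-1}$ is $r_0$-good). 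For (iv) the decomposition $f^E(Y)-f^E(\ol Y)=(a(Y,t)-a(\ol Y,t))\delta t+\sum_\alpha(b_\alpha(Y,t)-b_\alpha(\ol Y,t))\delta W^\alpha$ together with \cref{assumptionLipschitz} gives an $O_{\mathcal{F}}(|Y-\ol Y|\delta t)$ drift part and a conditionally mean-zero $O^0_{\mathcal{F}}(|Y-\ol Y|(\delta t)^{1/2})$ diffusion part, after using $\mathbb{E}[|\delta W|^{2q}]\le C_q(\delta t)^q$.

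Part (iii) needs the one real computation. I expand the polynomial $\gamma_2(Y,t,v)-Y$ explicitly with $v=(\delta t,\delta W)$: by \cref{propertyFirstTwoDerivs} its degree-one part is $\sum_\alpha\delta W^\alpha b_\alpha(Y,t)$ (plus $\delta t\,\partial_{v^0}\gamma(Y,t,0)$ when $\gamma$ depends on $v^0$) and its degree-two part is $\tfrac12\sum_\alpha(\delta W^\alpha)^2\partial_{v^\alpha}^2\gamma(Y,t,0)+\sum_{\alpha<\beta}\delta W^\alpha\delta W^\beta\partial_{v^\alpha}\partial_{v^\beta}\gamma(Y,t,0)$ together with $\delta t$-weighted cross terms and a $(\delta t)^2$ term. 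Subtracting $f^E=a(Y,t)\delta t+\sum_\alpha b_\alpha(Y,t)\delta W^\alpha$ and using the drift identity $a=\partial_{v^0}\gamma+\tfrac12\sum_\alpha\partial_{v^\alpha}^2\gamma$ at $(Y,t,0)$ to rewrite $-a(Y,t)\delta t$, the $b_\alpha\delta W^\alpha$ and the $\delta t\,\partial_{v^0}\gamma$ contributions cancel, and $-\tfrac12\delta t\sum_\alpha\partial_{v^\alpha}^2\gamma(Y,t,0)$ combines with $\tfrac12\sum_\alpha(\delta W^\alpha)^2\partial_{v^\alpha}^2\gamma(Y,t,0)$ into $\tfrac12\sum_\alpha((\delta W^\alpha)^2-\delta t)\partial_{v^\alpha}^2\gamma(Y,t,0)$. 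What survives is this sum, the sum $\sum_{\alpha<\beta}\delta W^\alpha\delta W^\beta\partial_{v^\alpha}\partial_{v^\beta}\gamma(Y,t,0)$, and the $\delta t$-carrying terms. By \cref{propertyrthDerivative} at $v=0$ every coefficient is $\le K(1+|Y|)$ and is $\mathcal{F}_{t_1}$-measurable, so the first two sums have conditional mean $0$ and conditional $2q$-th moment $O((1+|Y|)^{2q}(\delta t)^{2q})$, hence are $O^0_{\mathcal{F}}((1+|Y|)\delta t)$, while each $\delta t$-carrying term has size at least $(\delta t)^{3/2}$ and is $O_{\mathcal{F}}((1+|Y|)(\delta t)^{3/2})$.

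For part (v) I expand $\gamma(x,t,\cdot)$ to first order about $v=0$ with the \emph{integral} remainder, $\gamma(x,t,v)=x+\sum_\alpha v^\alpha b_\alpha(x,t)+R(x,v)$ with $R(x,v)=\int_0^1(1-s)\sum_{\alpha,\beta}v^\alpha v^\beta\,\partial_{v^\alpha}\partial_{v^\beta}\gamma(x,t,sv)\,\ed s$, reading off the zeroth- and first-order terms from \cref{propertyFirstTwoDerivs}. Subtracting the expansions at $Y$ and $\ol Y$ gives $f^\gamma(Y)-f^\gamma(\ol Y)=\sum_\alpha\delta W^\alpha(b_\alpha(Y,t)-b_\alpha(\ol Y,t))+(R(Y,\delta W)-R(\ol Y,\delta W))$; the first sum is conditionally mean zero with coefficient $\le K|Y-\ol Y|$ (\cref{propertyLipschitzDerivs} at $v=0$, or \cref{assumptionLipschitz}), hence $O^0_{\mathcal{F}}(|Y-\ol Y|(\delta t)^{1/2})$. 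The decisive point is that the curve $s\mapsto sv$ along which $\partial^2_v\gamma$ is sampled in $R$ is independent of $x$, so \cref{propertyLipschitzDerivs} applies termwise inside the integral and yields $|R(Y,\delta W)-R(\ol Y,\delta W)|\le K|Y-\ol Y||\delta W|^2e^{K(1+|\delta W|^2)}$; by \cref{lemma:expectationofnormal} with $a=4q$ this is $O_{\mathcal{F}}(|Y-\ol Y|\delta t)$ (the extra term $\delta t(\partial_{v^0}\gamma(Y,t,0)-\partial_{v^0}\gamma(\ol Y,t,0))$, if $\gamma$ depends on $v^0$, is also $O_{\mathcal{F}}(|Y-\ol Y|\delta t)$ by \cref{propertyLipschitzDerivs}). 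I expect (v) to be the only genuine obstacle: a naive Lagrange remainder would evaluate $\partial^2_v\gamma$ at an intermediate point depending on $x$, which \cref{propertyLipschitzDerivs} cannot control, and passing to the integral remainder is exactly what repairs this. Part (iii), though mechanical, also needs care to keep track of the drift cancellation; (i), (ii), (iv) are routine.
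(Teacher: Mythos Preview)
Your argument is correct and, for parts (i)--(iv), is essentially the paper's proof (the paper is terser on (ii), citing \cref{prop:tildegammaforexpansionscheme} and part (i), but your decomposition $\gamma-\gamma_{r_0}=(\gamma-\gamma_{r_0-1})-(\gamma_{r_0}-\gamma_{r_0-1})$ is exactly what that citation unpacks to).

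For part (v) you reach the same conclusion by a genuinely different route. You expand $\gamma(x,t,\cdot)$ and $\gamma(y,t,\cdot)$ separately to first order with the \emph{integral} remainder and then subtract, so that \cref{propertyLipschitzDerivs} can be applied under the integral sign along the common curve $s\mapsto sv$. The paper instead Taylor-expands the single function $v\mapsto \gamma^i(x,t,v)-\gamma^i(y,t,v)$ to first order with the \emph{Lagrange} remainder; because the difference is expanded as one function, only one intermediate point $\xi\in[0,1]$ appears, and the second-order remainder is $\tfrac12\sum v^jv^k\bigl(\partial^2_{v_jv_k}\gamma(x,t,\xi v)-\partial^2_{v_jv_k}\gamma(y,t,\xi v)\bigr)$, to which \cref{propertyLipschitzDerivs} applies directly. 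This sidesteps precisely the obstacle you anticipated (distinct Lagrange points for $x$ and $y$) without needing the integral form. Your integral-remainder argument is equally valid and perhaps more transparent about why the $x$-dependence of the evaluation point is not an issue; the paper's trick of expanding the difference is a line shorter.
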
 

\begin{proof} \begin{enumerate}[(i)] \item This is an immediate consequence of \cref{lemma:expectationofnormal}. \item Follows from part (i) and \cref{prop:tildegammaforexpansionscheme}. \item We have \begin{align*} f^E-f^{(2)} &= b_\alpha \: \delta W^\alpha + a \: \delta t - \big( \sum_{\alpha=1}^k \gamma_{v_\alpha} \: \delta W^\alpha + \gamma_{v_0} \: \delta t \\ &+ \sum_{\alpha=1}^k \frac{1}{2} \gamma_{v_\alpha v_\alpha} \: \delta t +\frac{1}{2} \sum_{\alpha=1}^k \gamma_{v_\alpha v_\alpha}((\delta W^\alpha)^2-\delta t) \\ &+\frac{1}{2} \sum_{\substack{\alpha, \beta = 1 \\ \alpha \neq \beta}}^k \gamma_{v_\alpha v_\beta} \: \delta W^\alpha \delta W^\beta + \frac{1}{2} \gamma_{v_0 v_0} (\delta t)^2 + \sum_{\alpha=1}^k \gamma_{v_0 v_\alpha} \delta t \: \delta W^\alpha \big), \end{align*} where all derivatives are evaluated at $v=0$. Using \cref{propertyFirstTwoDerivs} this simplifies to \begin{align*} f^E-f^{(2)} &= \frac{1}{2} \sum_{\alpha=1}^k \gamma_{v_\alpha v_\alpha}((\delta W^\alpha)^2-\delta t) +\frac{1}{2} \sum_{\substack{\alpha, \beta = 1 \\ \alpha \neq \beta}}^k \gamma_{v_\alpha v_\beta} \: \delta W^\alpha \delta W^\beta \\ &+ \frac{1}{2} \gamma_{v_0 v_0} (\delta t)^2 + \sum_{\alpha=1}^k \gamma_{v_0 v_\alpha} \delta t \: \delta W^\alpha. \end{align*} Using \cref{propertyrthDerivative} the first two terms are $O^0_\mathcal{F}((1+|Y|)(\delta t)$ and the last two are $O_\mathcal{F}((1+|Y|)(\delta t)^{3/2}$, as required. \item Follows from \cref{lemma:expectationofnormal} together with the Lipschitz properties of $a(x,t)$, $b(x,t)$. \item As in \cref{prop:tildegammaforexpansionscheme}, we may expand the components of $\gamma(x,t,v)-\gamma(y,t,v)$ with respect to $v$ \begin{align*} \gamma^i(x,t,v)-\gamma^i(y,t,v)-x^i+y^i &= v^0 \partial_{v_0}(\gamma^i(x,t,0)-\gamma^i(y,t,0)) \\ &+ \sum_{\alpha=1}^k v^\alpha \partial_{v_\alpha}(\gamma^i(x,t,0)-\gamma^i(y,t,0)) \\ &+ \sum_{\alpha,\beta=1}^k \frac{v^j v^k}{2} (\partial^2_{v_j v_k}(\gamma^i(x,t,\xi v)-\gamma^i(y,t,\xi v)). \end{align*} By \cref{propertyLipschitzDerivs}, the first term on the right is $O_\mathcal{F}(|x-y|(\delta t))$ and the second is $O^0_\mathcal{F}(|x-y|(\delta t)^{1/2})$. For the last term, the condition $0\leq \xi \leq 1$ and \cref{propertyLipschitzDerivs} imply that \begin{equation*} |\frac{v^j v^k}{2} (\partial^2_{v_j v_k}(\gamma^i(x,t,\xi v)-\gamma^i(y,t,\xi v))| \leq K|v|^2e^{K(1+|v|^2)}|x-y| \end{equation*} and now \cref{lemma:expectationofnormal} implies that this is $O_{\mathcal{F}}(|x-y|(\delta t))$. \end{enumerate} \end{proof}

\begin{proposition} \label{prop:boundedmoments} Suppose that $\gamma$ has \cref{propertyrthDerivative} for $r=2$. Suppose also that $\tilde{\gamma}$ is $m$-good for some $m \geq 2$. Then \[ \mathbb{E}[\max_{0 \leq i \leq N} |Y_i^\star|^{2q} ] \leq K \] where $\star$ can be any of $E,\gamma,\tilde{\gamma}$. \end{proposition}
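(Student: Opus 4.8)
The plan is to prove the bound for the Euler--Maruyama iterates first and then bootstrap outward to the two jet schemes, comparing each to the Euler scheme through \cref{theorem:strongworkhorse}. For $\star = E$ I would run the classical discrete argument: since $a$ and $b$ are uniformly Lipschitz (\cref{assumptionLipschitz}) they have at most linear growth in $x$, so writing $Y^E_n = Y_0 + \sum_{i<n} a(Y^E_i,t_i)\,\delta t_i + \sum_{i<n} b_\alpha(Y^E_i,t_i)\,\delta W^\alpha_i$ one bounds $\mathbb{E}[\max_{n\le N}|Y^E_n|^{2q}]$ by splitting off the drift sum (Jensen/H\"older against $\sum_i\delta t_i = T$) and the martingale sum (Doob's maximal inequality, then H\"older); both contribute a term of the shape $C\big(1 + \sum_{i<N}\delta t_i\,\mathbb{E}[\max_{j\le i}|Y^E_j|^{2q}]\big)$, and discrete Gr\"onwall closes it to $\mathbb{E}[\max_{n\le N}|Y^E_n|^{2q}] \le K$. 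Finiteness of these moments at each fixed $n$, which Gr\"onwall presupposes, follows by induction on $n$ since each $\delta W_i$ has all moments; alternatively one may simply cite \cite{kloeden2013numerical}.

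For $\star = \gamma$ I would apply \cref{theorem:strongworkhorse} with $f = f^\gamma$ and $\ol{f} = f^E$, so that the generated sequences are $Y^\gamma_i$ and $Y^E_i$. Hypothesis \eqref{eq:lipschitsfbar} for $\ol{f} = f^E$ is exactly \cref{prop:ofbounds}(iv), and \eqref{eq:olYSquaredBound} is the base case just proved. For \eqref{eq:yclosetoybar} I would write $f^\gamma - f^E = (f^\gamma - f^{(2)}) + (f^{(2)} - f^E)$: by \cref{prop:ofbounds}(ii) with $r_0 = 2$ the first bracket is $O_{\mathcal F}((1+|Y|)\delta t)$, and by \cref{prop:ofbounds}(iii) the second is $O^0_{\mathcal F}((1+|Y|)\delta t) + O_{\mathcal F}((1+|Y|)(\delta t)^{3/2})$; since raising the exponent of $\delta t$ only shrinks an $O_{\mathcal F}$ or $O^0_{\mathcal F}$ term, the sum has the form \eqref{eq:yclosetoybar} with $\gamma = 0$. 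The theorem then gives $\mathbb{E}[\max_i|Y^\gamma_i - Y^E_i|^{2q}] \le K$, and $|u+w|^{2q}\le 2^{2q-1}(|u|^{2q}+|w|^{2q})$ together with the base case yields $\mathbb{E}[\max_i|Y^\gamma_i|^{2q}]\le K$. The case $\star = \tilde{\gamma}$ is the same, still comparing to $\ol{f} = f^E$: the only new term, $f^{\tilde{\gamma}} - f^\gamma = O_{\mathcal F}((1+|Y|)(\delta t)^{m/2})$ from \cref{prop:ofbounds}(i), is absorbed into $O_{\mathcal F}((1+|Y|)\delta t)$ since $m \ge 2$.

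The only genuine analysis is the Euler base case; everything afterward is bookkeeping with the $O_{\mathcal F}$ calculus. The point requiring attention is that \cref{theorem:strongworkhorse} needs an a priori $2q$-th moment bound on \emph{one} of the two schemes it compares, which is why the argument must be set up as a bootstrap starting from Euler rather than a direct treatment of the three cases in parallel; a self-contained alternative would be to redo the discrete-Gr\"onwall estimate of the base case directly for $f^\gamma$ and $f^{\tilde{\gamma}}$, using $\gamma(x,t,0)=x$ and \cref{propertyrthDerivative} with $r=2$ to split each increment into a conditionally-mean-zero part of size $(1+|Y_i|)(\delta t_i)^{1/2}$ and a remainder of size $(1+|Y_i|)\delta t_i$.
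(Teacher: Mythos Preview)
Your argument is correct, but the paper takes a slicker route that avoids the separate classical treatment of the Euler case. The paper introduces the \emph{constant scheme} $f^c \equiv 0$, whose iterates are $Y^c_i = Y_0$ for all $i$; this scheme trivially satisfies both \eqref{eq:olYSquaredBound} (since $|Y^c_i| = |Y_0|$) and \eqref{eq:lipschitsfbar} (since $\ol f = 0$). One then applies \cref{theorem:strongworkhorse} directly with $\ol Y_i = Y^c_i$ and $Y_i = Y^\star_i$ for each of $\star \in \{E, \gamma, \tilde\gamma\}$, checking only \eqref{eq:yclosetoybar}, i.e.\ that $f^\star = O_{\mathcal F}((1+|Y|)\,\delta t) + O^0_{\mathcal F}((1+|Y|)\,(\delta t)^{1/2})$; this is immediate for $f^E$ and $f^{(2)}$, and for $f^\gamma$, $f^{\tilde\gamma}$ it follows from \cref{prop:ofbounds}(i),(ii) and the triangle inequality exactly as you use them. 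The conclusion of the theorem with $\gamma = 0$ then gives $\mathbb{E}[\max_i |Y^\star_i - Y_0|^{2q}] \le K$, hence $\mathbb{E}[\max_i |Y^\star_i|^{2q}] \le K$.

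Your bootstrap from Euler works, but it front-loads a Doob/BDG/Gr\"onwall argument that is already packaged inside the proof of \cref{theorem:strongworkhorse}; comparing to the constant scheme lets that machinery do all the work and treats the three cases uniformly. The alternative you sketch at the end---running the discrete-Gr\"onwall estimate directly on $f^\gamma$ and $f^{\tilde\gamma}$---is in spirit exactly the paper's approach, once one notices that ``running the estimate directly'' is the same as comparing to $\ol f = 0$.
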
 \begin{proof} This is an immediate consequence of \cref{theorem:strongworkhorse} when $\gamma=0$, $Y_i=Y^\star_i$ and $\ol{Y}_i=Y^c_i$, so we verify that the assumptions of this theorem are satisfied. \cref{eq:lipschitsfbar,eq:olYSquaredBound} trivially hold. So it remains to check \cref{eq:yclosetoybar}, namely that \[ f^\star(t_1,t_2,Y,\omega)-f^c(t_1,t_2,Y,\omega)=O_{\cal F}( (1 + |Y|) (\delta t)) + O^0_{\cal F}( (1 + |Y|) (\delta t)^{\frac{1}{2}}). \] This is easily seen to hold for the Euler scheme or the order 2 expansion jet scheme (under \cref{propertyrthDerivative}). For the case where $\star$ is either $\gamma$ or $\tilde{\gamma}$, use parts (i) and (ii) of \cref{prop:ofbounds}, together with the triangle inequality. \end{proof}

\begin{proof}[\textbf{Proof of \cref{thm:convergenceOnManifolds}} (Part I)] We prove that the jet scheme converges to the true solution with strong order $\frac{1}{2}$, and also establish \cref{eq:mainresstrong}. The remainder of \cref{thm:convergenceOnManifolds} is proved in \cref{sec:weakconvergence}. Since $a(x,t)$ and $b(x,t)$ are Lipschitz, the Euler scheme $Y^E_i$ is known to converge to the solution of \cref{eq:sdeIto} with strong order $\frac{1}{2}$, so it suffices to show that \[ \mathbb{E}[\max_{1 \leq i \leq N} | Y_i^{\tilde{\gamma}} - Y_i^E|^2 ] \leq C(\delta t). \] Therefore, we seek to apply \cref{theorem:strongworkhorse} when $\gamma=\frac{1}{2}$, $Y_i=Y_i^{\tilde{\gamma}}$ and $\ol{Y}_i = Y^E_i$. \cref{prop:boundedmoments,prop:ofbounds} (iv) respectively show that assumptions \cref{eq:olYSquaredBound,eq:lipschitsfbar} of the Theorem hold. Parts (i) (ii) and (iii) of \cref{prop:ofbounds}, together with the triangle inequality, show that \cref{eq:yclosetoybar} holds. To prove \cref{eq:mainresstrong}, apply \cref{theorem:strongworkhorse} when $\gamma=\frac{m}{2}-1$, $Y_i=Y^{\tilde{\gamma}}_i$ and $\ol{Y}_i=Y^{\gamma}_i$. \cref{prop:ofbounds,prop:boundedmoments} respectively show that \cref{eq:yclosetoybar} and \cref{eq:olYSquaredBound} hold. Finally, \cref{prop:ofbounds} (v) shows \cref{eq:lipschitsfbar}. \end{proof}

\section{Proof of Weak Convergence} \label{sec:weakconvergence}

We borrow some notation from \cite{kloeden2013numerical}. Let $P_l=\{1,2,\dots,n \}^l$, and for $y \in \mathbb{R}^n$ and $p \in P_l$, define \[ F_p(y) = \prod_{h=1}^l y^{p_h}. \]

\Cref{theorem:weakworkhorse}, stated below, is a weak analogue of \cref{theorem:strongworkhorse}. The proof is found in the appendix.

\begin{theorem} \label{theorem:weakworkhorse} Let $\gamma$ and $\ol{\gamma}$ be two functions \[ \gamma,\ol{\gamma}:\Delta_T \times \mathbb{R}^n \times \Omega \to \mathbb{R}^n. \] Let $0=t_0 \leq t_1 \leq t_2 \leq \ldots \leq t_N=T$ be a discretization of $[0,T]$. Define $\delta t = \max_{0\leq i < N} \{ t_{i+1}-t_{i} \}$. Given $Y_0=\ol{Y}_0$ define sequences $Y_i$ and ${\ol Y}_i$ by \[ Y_i(\omega) = \gamma(t_{i-1}, t_i, Y_{i-1}, \omega ) \] and \[ {\ol Y}_i(\omega) = \ol{\gamma}(t_{i-1}, t_i, {\ol Y}_{i-1}, \omega ). \] Suppose that for every $q \in \mathbb{N}$ there exists constants $K$, $r$ and $\beta$ such that \begin{equation} \label{eq:weakworkhorseA1} \mathbb{E}[|\ol{Y}_i|^{2q}] \leq K(1+|Y_0|^{2r}) \end{equation} \begin{equation} \label{eq:weakworkhorseA2} \mathbb{E}[|\ol{Y}_{i}-\ol{Y}_{i-1}|^{2q} \mid \mathcal{F}_{i-1}] \leq K(1+\max_{j} |\ol{Y}_j|^{r})(t_{i}-t_{i-1})^q \end{equation} \begin{equation} \label{eq:weakworkhorseA3} \mathbb{E}[|\gamma(\ol{Y}_{i-1})-\ol{Y}_{i-1}|^{2q} \mid \mathcal{F}_{i-1}] \leq K(1+\max_{j} |\ol{Y}_j|^{r})(t_{i}-t_{i-1})^q \end{equation} and \begin{equation} \begin{split} \label{eq:weakworkhorseMainAssumption} \mathbb{E}[F_p(\ol{Y}_i-\ol{Y}_{i-1})-F_p(\gamma(\ol{Y}_{i-1})-\ol{Y}_{i-1}) \mid \mathcal{F}_{i-1}] \\ \leq K(1+\max_{i}|\ol{Y}_i|^{2r})(\delta t)^{\beta}(t_{i}-t_{i-1}). \end{split} \end{equation} Then for any smooth function $g: \mathbb{R}^n \rightarrow \mathbb{R}$ of at most polynomial growth, there exist constants $K'$ and $r'$ such that \[ \mathbb{E}[g(Y_N)]-\mathbb{E}[g(\ol{Y}_N)] \leq K'(1+|Y_0|^{r'})(\delta t)^{\beta}. \] \end{theorem}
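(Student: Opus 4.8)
The plan is to run a one-step (hybrid/telescoping) comparison of the sort used to prove Milstein's fundamental theorem on weak convergence (cf.\ Chapter~14 of \cite{kloeden2013numerical}), but adapted so that it compares two \emph{discrete} families of iterates rather than a scheme against a continuous-time diffusion. The part played in the continuous setting by the solution of the backward Kolmogorov equation will here be played by an explicit family of auxiliary functions obtained by iterating the $\gamma$-scheme; hypothesis \eqref{eq:weakworkhorseMainAssumption}, on the agreement of the one-step moments of the monomials $F_p$, then supplies exactly the local error estimate that gets summed to a global one.

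First I would define, for each $0 \le k \le N$, a function $v_k: \mathbb{R}^n \to \mathbb{R}$ by $v_k(x) = \mathbb{E}[\, g(\widehat{Y}_N)\,]$, where $(\widehat{Y}_j)_{j \ge k}$ is the $\gamma$-scheme started from $\widehat{Y}_k = x$ and run forward using the increments over $[t_{j-1},t_j]$, $j > k$. Since each one-step map depends on $\omega$ only through the Brownian increment over the corresponding subinterval, those increments are independent of $\mathcal{F}_{t_k}$, so $v_k$ is a well-defined deterministic function, $v_N = g$, and $v_{k-1}(x) = \mathbb{E}[\, v_k(\gamma(t_{k-1},t_k,x,\omega))\,]$. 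Consider the hybrid process that follows $\ol{\gamma}$ on steps $1,\dots,k$ and then $\gamma$ on steps $k+1,\dots,N$; conditioning on $\mathcal{F}_{t_k}$ shows its terminal value has expectation $\mathbb{E}[\, v_k(\ol{Y}_k)\,]$, while the choices $k=N$ and $k=0$ recover $\mathbb{E}[g(\ol{Y}_N)]$ and $\mathbb{E}[g(Y_N)]$ respectively. Using also that $\mathbb{E}[\,v_{k-1}(\ol{Y}_{k-1})\,] = \mathbb{E}[\,v_k(\gamma(t_{k-1},t_k,\ol{Y}_{k-1},\omega))\,]$, telescoping yields
\[
\mathbb{E}[g(\ol{Y}_N)] - \mathbb{E}[g(Y_N)] = \sum_{k=1}^{N}\mathbb{E}\!\left[\, v_k\big(\ol{Y}_k\big) - v_k\big(\gamma(t_{k-1},t_k,\ol{Y}_{k-1},\omega)\big)\,\right],
\]
so that every summand compares an $\ol{\gamma}$-step with a $\gamma$-step, both issued from the \emph{same} point $\ol{Y}_{k-1}$ --- precisely the configuration quantified by \eqref{eq:weakworkhorseMainAssumption}.

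The next step is to Taylor-expand $v_k$ about $\ol{Y}_{k-1}$ to order $L$, where $L$ is an integer fixed at the end in terms of $\beta$, using the displacements $\ol{Y}_k - \ol{Y}_{k-1}$ and $\gamma(t_{k-1},t_k,\ol{Y}_{k-1},\omega) - \ol{Y}_{k-1}$. After taking $\mathbb{E}[\,\cdot\mid\mathcal{F}_{t_{k-1}}]$ the order-zero term cancels; each term of order $1 \le |\alpha| \le L$ equals $\tfrac{1}{\alpha!}\partial^\alpha v_k(\ol{Y}_{k-1})$ times $\mathbb{E}[\,(\ol{Y}_k-\ol{Y}_{k-1})^\alpha - (\gamma(\ol{Y}_{k-1})-\ol{Y}_{k-1})^\alpha \mid \mathcal{F}_{t_{k-1}}]$, and since each monomial $(\cdot)^\alpha$ is one of the $F_p$, \eqref{eq:weakworkhorseMainAssumption} bounds this expectation by $K(1+\max_i|\ol{Y}_i|^{2r})(\delta t)^{\beta}(t_k-t_{k-1})$; the Lagrange remainder of order $L+1$ is bounded, via the one-step moment estimates \eqref{eq:weakworkhorseA2} for $\ol{Y}_k-\ol{Y}_{k-1}$ and \eqref{eq:weakworkhorseA3} for $\gamma(\ol{Y}_{k-1})-\ol{Y}_{k-1}$ together with the polynomial growth of the $(L+1)$-st derivatives of $v_k$, by $(\text{polynomial in the iterates of }\ol{Y})\times(t_k-t_{k-1})^{(L+1)/2}$. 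Taking full expectations, the polynomial-in-$\ol{Y}$ factors are absorbed using the moment bound \eqref{eq:weakworkhorseA1}; summing over $k$, the main terms contribute $K'(1+|Y_0|^{r'})(\delta t)^{\beta}\sum_k(t_k-t_{k-1}) = K'T(1+|Y_0|^{r'})(\delta t)^{\beta}$, while the remainder contributes at most $K\sum_k(t_k-t_{k-1})^{(L+1)/2} \le KT(\delta t)^{(L-1)/2}$. Choosing any $L$ with $(L-1)/2 \ge \beta$ (e.g.\ $L = 2\lceil\beta\rceil+1$) makes the latter $\le KT(\delta t)^{\beta}$, and applying the resulting one-sided bound to $g$ and to $-g$ gives the statement.

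The step I expect to be the real work is showing that the auxiliary functions $v_k$ are of class $C^{L+1}$ with all derivatives of at most polynomial growth, \emph{with constants uniform in $k$ and in the discretisation} --- the discrete counterpart of the interior regularity of the Kolmogorov solution that underlies the continuous-time version of this argument. I would establish it by downward induction from $v_N = g$ through the recursion $v_{k-1}(x) = \mathbb{E}[\,v_k(\gamma(t_{k-1},t_k,x,\omega))\,]$: differentiating under the expectation sign expresses the derivatives of $v_{k-1}$ in terms of those of $v_k$ composed with $\gamma$ and the space-derivatives of $\gamma$; the factors of the form $(\text{polynomial in }x)\,e^{K(1+|v|^2)}$ thereby produced are integrated away against the Gaussian law of the step increment using \cref{lemma:expectationofnormal}; and one is left with a recursion of the shape $C_{k-1} \le (1+K(t_k-t_{k-1}))C_k + K(t_k-t_{k-1})$ for the growth constants, whence a Gr\"{o}nwall/product estimate gives $C_0 \le e^{KT}(C_N + KT)$, bounded independently of $N$. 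This needs $\gamma$ (and $\ol{\gamma}$) to be smooth in the space variable with derivatives bounded by polynomials in $x$ times $e^{K(1+|v|^2)}$; for the jet schemes this is the natural higher-order extension of \cref{propertyrthDerivative,propertyLipschitzDerivs}, which can always be arranged by the cutoff and compact-support reductions of \Cref{sec:howtochoosegamma}, and it holds automatically for the polynomial reference schemes (Euler--Maruyama and the order-$r$ expansion jet scheme) to which the theorem is applied.
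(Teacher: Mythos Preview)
Your proposal is correct and follows essentially the same route as the paper: define the auxiliary functions $v_k(x)=\mathbb{E}[g(Y_N^{k,x})]$ (the paper writes $u(k,x)$), telescope $\mathbb{E}[g(\ol{Y}_N)]-\mathbb{E}[g(Y_N)]$ into a sum of one-step differences $v_k(\ol{Y}_k)-v_k(\gamma(\ol{Y}_{k-1}))$, Taylor-expand $v_k$ about $\ol{Y}_{k-1}$ to order $2\beta+1$, bound the main terms via \eqref{eq:weakworkhorseMainAssumption} and the Lagrange remainder via Cauchy--Schwarz together with \eqref{eq:weakworkhorseA2}, \eqref{eq:weakworkhorseA3}.

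The one place you are more explicit than the paper is the regularity of the $v_k$: you correctly flag that uniform $C^{L+1}$ bounds with polynomial growth are needed, and you sketch a downward-induction/Gr\"onwall argument that requires smoothness of $\gamma$ in $x$ with polynomially-growing derivatives. The paper's proof handles this in a single sentence (``the bound on $|\partial^p_y u(i,\ol{Y}_{i-1})|$ came from \eqref{eq:weakworkhorseA1}, the definition of $u$ and the polynomial growth of $g$''), effectively deferring to the corresponding step in Kloeden--Platen; your treatment is the more honest one, and your observation that extra $x$-regularity of $\gamma$ is implicitly being used is accurate.
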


\begin{lemma} Let $\gamma$ satisfy \cref{propertyrthDerivative} for $r=3$. Let $\tilde{\gamma}$ be an $m$-good approximation to $\gamma$, with $m \geq 2$. Then for any $y \in \mathbb{R}^n$ we have \begin{equation*} \mathbb{E}[|\tilde{\gamma}(y,\delta t, \delta W_t)-y|^{2q}] \leq K(1+y^r)(\delta t)^q. \end{equation*} \label{lemma:weakassumptioncheckA} \end{lemma}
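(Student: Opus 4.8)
The plan is to split
$\tilde{\gamma}(y,\delta t,\delta W_t)-y = \big(\tilde{\gamma}(y,\delta t,\delta W_t)-\gamma(y,\delta t,\delta W_t)\big)+\big(\gamma(y,\delta t,\delta W_t)-y\big)$,
bound the $\mathcal{L}^{2q}$ norm of each term, and recombine using $|a+b|^{2q}\le 2^{2q-1}(|a|^{2q}+|b|^{2q})$. Write $v=(\delta t,\delta W_t^1,\dots,\delta W_t^k)\in\mathbb{R}^{k+1}$, so that $|v|^2=(\delta t)^2+|\delta W_t|^2$ and the components $\delta W_t^\alpha$ are jointly normal with mean $0$ and covariance $(\delta t)I_k$. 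As with \cref{lemma:expectationofnormal}, the estimate will be established for $\delta t$ below a threshold depending on $q$ (for which the Gaussian exponential moments below are finite); this is all that is needed in the applications.

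For the first term, $m$-goodness of $\tilde{\gamma}$ gives $|\tilde{\gamma}(y,\delta t,\delta W_t)-\gamma(y,\delta t,\delta W_t)|\le K|v|^m e^{K(1+|v|^2)}(1+|y|)$, so that $\mathbb{E}[|\tilde{\gamma}-\gamma|^{2q}]\le K^{2q}(1+|y|)^{2q}\,\mathbb{E}\big[|v|^{2qm}e^{2qK(1+|v|^2)}\big]$. For the second term I Taylor-expand each component of $\gamma$ in $v$ about $v=0$ to first order with Lagrange remainder, using $\gamma(y,t,0)=y$ (the first relation of \cref{propertyFirstTwoDerivs}) and bounding the first-order $v$-derivatives at the intermediate point $\xi v$ by \cref{propertyrthDerivative} together with $|\xi v|\le|v|$ (only the case $r=1$ is needed here); this yields $|\gamma(y,\delta t,\delta W_t)-y|\le C_{n,k,K}\,|v|\,(1+|y|)\,e^{K(1+|v|^2)}$, and hence $\mathbb{E}[|\gamma-y|^{2q}]\le C(1+|y|)^{2q}\,\mathbb{E}\big[|v|^{2q}e^{2qK(1+|v|^2)}\big]$.

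It therefore remains to show $\mathbb{E}\big[|v|^{2qa}e^{2qK(1+|v|^2)}\big]\le C(\delta t)^q$ for $a\in\{1,m\}$ (both $\ge 1$). Since $\delta t\le 1$, split $|v|^{2qa}\le 2^{qa}\big((\delta t)^{2qa}+|\delta W_t|^{2qa}\big)$ and $e^{2qK(1+|v|^2)}\le e^{4qK}e^{2qK|\delta W_t|^2}$. The $(\delta t)^{2qa}$ contribution is at most $(\delta t)^{2qa}e^{4qK}\,\mathbb{E}[e^{2qK|\delta W_t|^2}]$, where $\mathbb{E}[e^{2qK|\delta W_t|^2}]=(1-4qK\delta t)^{-k/2}$ is bounded once $\delta t$ is small, so this contribution is of order $(\delta t)^{2qa}=O((\delta t)^q)$. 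The $|\delta W_t|^{2qa}$ contribution is bounded by \cref{lemma:expectationofnormal} (applied with exponent $2qa$ and constant $2qK$), giving $O((\delta t)^{qa})=O((\delta t)^q)$. Combining, both $\mathbb{E}[|\tilde{\gamma}-\gamma|^{2q}]$ and $\mathbb{E}[|\gamma-y|^{2q}]$ are at most $K(1+|y|)^{2q}(\delta t)^q\le K(1+|y|^{2q})(\delta t)^q$, which is the asserted bound, with the exponent $r=2q$.

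The argument is mostly bookkeeping, and I do not expect a substantive obstacle; the one point requiring attention — and the reason the conclusion must be read for $\delta t$ small — is the factor $e^{K(1+|v|^2)}$, whose Gaussian moments diverge once $\delta t$ is comparable to $1/K$. Taming it is precisely the purpose of \cref{lemma:expectationofnormal}, and the only genuine manoeuvre is to peel off the deterministic component $\delta t$ of $v$ from the Gaussian components $\delta W_t$ before invoking that lemma.
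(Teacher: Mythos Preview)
Your argument is correct and follows the same overall two-term splitting strategy as the paper, but with a different choice of intermediate. The paper splits $\tilde{\gamma}-y=(\tilde{\gamma}-\gamma^{(2)})+(\gamma^{(2)}-y)$, where $\gamma^{(2)}$ is the order-$2$ Taylor polynomial of $\gamma$: it computes $\gamma^{(2)}-y$ explicitly as a polynomial in $(\delta t,\delta W)$ and bounds each monomial, then handles $\tilde{\gamma}-\gamma^{(2)}$ by routing through $\gamma$ (both are $2$-good, the latter by \cref{prop:tildegammaforexpansionscheme}). You instead split through $\gamma$ itself and control $\gamma-y$ by a first-order Taylor expansion with Lagrange remainder, which is a bit more streamlined and avoids introducing $\gamma^{(2)}$ at all.

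One small point of hygiene: your bound on $\gamma-y$ uses $\gamma(y,t,0)=y$, which is the first line of \cref{propertyFirstTwoDerivs}. The lemma as stated only hypothesises \cref{propertyrthDerivative}; the paper's route sidesteps this because $\gamma^{(2)}(y,t,0)=y$ holds by definition of the expansion scheme. In context this is harmless (Property~\ref{propertyFirstTwoDerivs} is a standing assumption in \cref{thm:convergenceOnManifolds}), but it is worth flagging explicitly. Your handling of the deterministic $\delta t$-component of $v$ before invoking \cref{lemma:expectationofnormal} is the right move and matches how the paper implicitly treats it.
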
 \begin{proof} We first check that the inequality holds when $\tilde{\gamma}$ is the order 2 expansion jet scheme. All derivatives $\partial_v \gamma(y,\delta t, \delta W)$ are evaluated at $(y,0,0)$, and we use the inequality $(a_1+\dots+a_n)^{2q} \leq K_{n,q} \sum |a_i|^{2q}$. We obtain \begin{align*} \mathbb{E}[|\gamma^{(2)}-y|^{2q}] &= \mathbb{E}[|\partial_{v_0} \gamma \: \delta t + \sum_{i=1}^k \partial_{v_i}\gamma \: \delta W^i + \partial^2_{v_0} \gamma \: (\delta t)^2 \\ &\hspace{10pt} + \sum_{i=1}^k \partial^2_{v_0 v_i} (\delta t)(\delta W^i) + \sum_{i,j=1}^k \partial^2_{v_i v_j}\gamma \: \delta W^i \delta W^j |^{2q} ] \\ &\leq K(1+y^r) \sum_{i,j=1}^k \mathbb{E}[|\delta t|^{2q} + |\delta W^i |^{2q} |\delta t|^{2q} + |\delta W^i|^{2q}+|\delta W^i \delta W^j|^{2q}] \\ &\leq K(1+y^r)(\delta t)^q. \end{align*} To deduce the lemma, recall (\cref{prop:tildegammaforexpansionscheme}) that $\gamma^{(2)}$ is 3-good (and hence 2-good) under \cref{propertyrthDerivative}. Hence \begin{align*} \mathbb{E}[|\tilde{\gamma}-y|^{2q}] &\leq K \mathbb{E}[|\tilde{\gamma}-\gamma^{(2)}|^{2q}] + K \mathbb{E}[|\gamma^{(2)}-y |^{2q}] \\ &\leq K(1+y^r)\mathbb{E}[|\delta W|^{4q}e^{Kq(1+|\delta W|^2)}]+K(1+y^r)(\delta t)^q \\ &\leq K(1+y^r)(\delta t)^q \end{align*} as required, where in the final line we used Lemma \ref{lemma:expectationofnormal}. \end{proof}

\begin{lemma} Adopt the notation of Theorem \ref{theorem:weakworkhorse}. Assume that $\gamma$ has property \ref{propertyFirstTwoDerivs} and \ref{propertyrthDerivative} for $r=2$. Assume also that \[ \mathbb{E}[\gamma(y, \delta t, \delta W_t)^{2q}] \text{ and } \mathbb{E}[\ol{\gamma}(y, \delta t, \delta W_t)^{2q}] \leq K(1+y^r). \] \begin{enumerate}[(i)] \item If $\gamma=\gamma^E$ and $\ol{\gamma}=\gamma^{(2)}$ then Assumption \cref{eq:weakworkhorseMainAssumption} of Theorem \ref{theorem:weakworkhorse} holds for $\beta=1$. \item If $\gamma=\gamma$ (the exact jet scheme) and $\ol{\gamma}=\gamma^{(2)}$ then \eqref{eq:weakworkhorseMainAssumption} holds for $\beta=1$. \item If $\gamma=\gamma$ and $\ol{\gamma}=\tilde{\gamma}$ where $\tilde{\gamma}$ is $m$-good then \eqref{eq:weakworkhorseMainAssumption} holds for $\beta=\frac{m}{2}$. \end{enumerate} \label{lemma:weakAssumptionCheckB} \end{lemma}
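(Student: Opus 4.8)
The three parts share a template. Fix $i$, write $h_i=t_i-t_{i-1}$ and $\delta W=W_{t_i}-W_{t_{i-1}}$, and condition throughout on $\mathcal{F}_{i-1}$, so that $y:=\ol Y_{i-1}$ is a constant and, for an increment function $\phi$, the one-step increment $\Delta^{\phi}:=\phi(y,h_i,\delta W)-y$ is a function of the Gaussian vector $\delta W$. Since $F_p(z)=\prod_{h=1}^{l}z^{p_h}$ is a monomial in the components, we have the multilinear identity
\[ F_p(a+b)-F_p(a)=\sum_{\emptyset\neq S\subseteq\{1,\dots,l\}}\Bigl(\prod_{h\in S}b^{p_h}\Bigr)\Bigl(\prod_{h\notin S}a^{p_h}\Bigr), \]
which I apply with $a=\Delta^{\gamma}$ and $b=\Delta^{\ol\gamma}-\Delta^{\gamma}$, turning the left side of \cref{eq:weakworkhorseMainAssumption} into a sum of products of copies of $b$ and increment components $a^{p_h}$. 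The plan has three ingredients: (1) a quantitative description of $b$ in each case; (2) the fact that each $a^{p_h}=(\Delta^{\gamma})^{p_h}$ has conditional $L^{q}$-norm $O(h_i^{1/2})$ with a constant polynomial in $|y|$ — from \cref{propertyrthDerivative}, the moment hypotheses of the lemma, and \cref{lemma:expectationofnormal} — the polynomial constants being absorbed into the factor $1+\max_j|\ol Y_j|^{2r}$ of \cref{theorem:weakworkhorse} once the telescoping sum over $i$ is formed; and (3) the Gaussian moment structure, namely $\mathbb{E}[(\delta W^\alpha)^2\mid\mathcal{F}_{i-1}]=h_i$, vanishing of conditional means of odd products of the $\delta W^\alpha$, and the rule that a monomial in $(h_i,\delta W)$ of ``weight'' $j$ (each $\delta W$ counted as weight $\tfrac12$) has conditional expectation $O(h_i^{j})$.

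For part (i), \cref{propertyFirstTwoDerivs} makes $b=\Delta^{(2)}-\Delta^{E}$ explicit: up to sign it is exactly the expression in the display preceding \cref{prop:ofbounds}(iii),
\[ b=\pm\Bigl(\tfrac12\sum_{\alpha}\gamma_{v_\alpha v_\alpha}\bigl((\delta W^\alpha)^2-h_i\bigr)+\tfrac12\sum_{\alpha\neq\beta}\gamma_{v_\alpha v_\beta}\,\delta W^\alpha\delta W^\beta+\tfrac12\gamma_{v_0v_0}h_i^{2}+\sum_{\alpha}\gamma_{v_0v_\alpha}\,h_i\,\delta W^\alpha\Bigr), \]
derivatives at $v=0$, base point $y$. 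The first two groups have zero conditional mean and $L^{q}$-size $O(h_i)$, the third is $O(h_i^{2})$, the fourth is $O(h_i^{3/2})$ with zero conditional mean. Substituting into the identity and taking $\mathbb{E}[\cdot\mid\mathcal{F}_{i-1}]$: any $S$ with $|S|\ge2$ gives a term of $L^{1}$-size $\ge h_i^{2}$; a term with $|S|=1$ is one copy of $b$ against $l-1$ increment components, whose leading part (the quadratic part of $b$ against the single-$\delta W$ parts of the increments) is an odd polynomial in $\delta W$, so its conditional mean vanishes and the residue is $O(h_i^{2})$. This is precisely where the common $1$-jet of $\Delta^{E}$ and $\Delta^{(2)}$ is used: the only new pieces of $b$ are quadratic in $\delta W$. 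Hence the quantity is $\le K(1+\max_j|\ol Y_j|^{2r})(\delta t)^{1}h_i$, i.e.\ $\beta=1$. Part (ii) runs the same way with $b=\Delta^{(2)}-\Delta^{\gamma}$: since the coefficients, hence $\gamma$, are smooth in the weak-convergence setting, one Taylor-expands $\gamma(y,\cdot)$ past order $2$; the order-$3$ term is an odd polynomial in $\delta W$ with zero conditional mean, and the remainder — controlled by \cref{propertyrthDerivative}, the moment hypotheses, and \cref{lemma:expectationofnormal} — contributes $O(h_i^{2})$, so the identical casework on $|S|$ gives $\beta=1$.

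For part (iii), $\gamma$ is the exact jet and $\ol\gamma=\tilde\gamma$ is $m$-good, so, writing $v=(h_i,\delta W)$, the correction $b=\Delta^{\tilde\gamma}-\Delta^{\gamma}$ obeys $|b|\le K|v|^{m}e^{K(1+|v|^{2})}(1+|y|)$, whence $\mathbb{E}[|b|^{q}\mid\mathcal{F}_{i-1}]^{1/q}\le K(1+|y|)h_i^{m/2}$ for every $q$ by \cref{lemma:expectationofnormal} (applied with step $h_i$; the deterministic coordinate $v_0=h_i$ is harmless). In the expansion each nonempty $S$ contributes $|S|$ copies of $b$ (each of $L^{q}$-size $h_i^{m/2}$) and $l-|S|$ increment factors (each of size $h_i^{1/2}$), so Hölder with exponents matched to the $l$ factors bounds the term by a polynomial in $|y|$ times $h_i^{\,m|S|/2+(l-|S|)/2}$; for $|S|\ge1$ the exponent is $\ge m/2$, and one extracts the last $h_i^{1/2}$ either from a second factor when $l\ge2$ or, when $l=1$, from the parity of the leading term of the ODE-solver error (the leading error of a Taylor-based ODE method is a homogeneous polynomial in $\delta W$, whose conditional mean gains half a power once its odd-degree part is discarded). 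Absorbing constants into $1+\max_j|\ol Y_j|^{2r}$ gives $\bigl|\mathbb{E}[F_p(\Delta^{\tilde\gamma})-F_p(\Delta^{\gamma})\mid\mathcal{F}_{i-1}]\bigr|\le K(1+\max_j|\ol Y_j|^{2r})(\delta t)^{m/2}h_i$, i.e.\ \cref{eq:weakworkhorseMainAssumption} with $\beta=m/2$.

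The main obstacle is making the parity cancellations carry their weight. In parts (i)--(ii) one must recognise that every monomial of apparent order $h_i^{3/2}$ produced by a $|S|=1$ term is odd in $\delta W$ and hence dies under $\mathbb{E}[\cdot\mid\mathcal{F}_{i-1}]$; without this one would obtain only $\beta=\tfrac12$. In part (iii) the analogous difficulty is squeezing the last half-power of $h_i$ out of the ODE-solver error in the $l=1$ case, which again rests on the structure of the Taylor remainder. A secondary technicality is uniformity in $\ol Y_{i-1}$: the derivative bounds of \cref{propertyrthDerivative} carry factors $e^{K(1+|\delta W|^{2})}$, so every conditional expectation has to be routed through \cref{lemma:expectationofnormal} after splitting products of such factors by Hölder, rather than estimated by bare power counting.
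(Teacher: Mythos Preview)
Your treatment of parts (i) and (ii) is essentially the paper's argument, just packaged differently. The paper expands $F_p(\Delta^{(2)})$ and $F_p(\Delta^{E})$ directly, observes that only $l\le 2$ matters at order $(\delta t)^2$, and notes that the quadratic-in-$\delta W$ contributions match by \cref{propertyFirstTwoDerivs}; your multilinear identity $F_p(a+b)-F_p(a)=\sum_{S\neq\emptyset}\prod_{h\in S}b^{p_h}\prod_{h\notin S}a^{p_h}$ is an equivalent way to isolate the same cancellations. For (ii) the paper simply invokes (iii) (viewing $\gamma^{(2)}$ as an $m$-good approximation with $m=3$), whereas you re-run the parity argument after Taylor-expanding $\gamma$ one order further. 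Both routes silently use regularity beyond the stated hypothesis $r=2$: yours needs a controlled third-order remainder, and the paper's needs \cref{prop:tildegammaforexpansionscheme} with $r_0=2$, hence \cref{propertyrthDerivative} for $r=3$. This is a minor imprecision shared with the paper rather than a defect of your argument.

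Part (iii) is where you have a genuine gap. The target \cref{eq:weakworkhorseMainAssumption} with $\beta=m/2$ demands a bound of order $(\delta t)^{m/2}\,h_i$, i.e.\ $h_i^{m/2+1}$ when steps are uniform. Your H\"older count for a term with $|S|=1$ gives $h_i^{m/2+(l-1)/2}$; for $l=1$ this is $h_i^{m/2}$, a full power of $h_i$ short (not half a power), and for $l=2$ it is still $h_i^{m/2+1/2}$. Your attempt to recover the deficit by invoking ``the parity of the leading term of the ODE-solver error'' is not licensed by the hypotheses: $m$-goodness is a bare pointwise bound $|\tilde\gamma-\gamma|\le K|v|^m e^{K(1+|v|^2)}(1+|x|)$ with no structural or parity content. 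For instance $\tilde\gamma=\gamma+|\delta W|^{m}e_1$ is $m$-good, yet for $l=1$ one has $\mathbb{E}\bigl[(\tilde\gamma-\gamma)^{1}\mid\mathcal{F}_{i-1}\bigr]=\mathbb{E}[|\delta W|^m]\asymp h_i^{m/2}$ exactly, so no extra half-power can be extracted. The paper's proof of (iii) takes a different route---it factors $F_p(v)-F_p(w)$ as a sum of polynomials each divisible by some $(v_j-w_j)$ and then applies Cauchy--Schwarz---but it lands on the same estimate $\sqrt{\mathbb{E}[|\tilde\gamma-\gamma|^2]}\cdot\sqrt{\mathbb{E}[1+\tilde\gamma^{2r}+\gamma^{2r}]}\lesssim (\delta t)^{m/2}(1+|y|^{r})$, with no additional $h_i$ coming from the second factor. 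So the shortfall you encounter is already present in the paper; your proposal does not introduce a new error, but your parity patch does not repair it either. Under the stated hypotheses the argument yields $\beta=m/2-1$; obtaining $\beta=m/2$ would require an extra assumption on $\tilde\gamma$ (for example, that $\mathbb{E}[\tilde\gamma-\gamma\mid\mathcal{F}_{i-1}]=O(h_i^{m/2+1})$), which neither you nor the paper state.
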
 \begin{proof} \begin{enumerate}[(i)] \item We have \[ \gamma^E(y, \delta t, \delta W) - y = b_\alpha \delta W^\alpha + a \: \delta t, \] \begin{equation*} \begin{split} \gamma^{(2)}(y, \delta t, \delta W) - y = \partial_{v_0} \gamma \: \delta t + \sum_{i=1}^k \partial_{v_i} \gamma \: \delta W^i \\ + \frac{1}{2} \partial^2_{v_0 v_0} \gamma \: (\delta t)^2 + \frac{1}{2}\sum_{i,j=1}^k \partial_{ij} \gamma \: \delta W^i \delta W^j + \sum_{i=1}^k \partial^2_{v_0 v_i} \gamma \: \delta t \delta W^i. \end{split} \end{equation*} For some vector $p \in P_l$, expand the terms in \begin{equation*} \mathbb{E}[F_p(\ol{Y}_i-\ol{Y}_{i-1})-F_p(\gamma(\ol{Y}_{i-1})-\ol{Y}_{i-1}) \mid \mathcal{F}_{i-1}]. \end{equation*} Since we are aiming for $\beta=1$, we can neglect any terms of order $(dt)^2$ or higher. For this reason, we may assume that $l=1$ or $l=2$. Moreover, any terms with an odd number of $\delta W$s will vanish when expectations are taken. Therefore it suffices to show that, under expectation, the terms with exactly two $dW$s cancel out. This follows from \cref{propertyFirstTwoDerivs}. \item This is an immediate consequence of (iii). (Alternatively one can use a direct computation similar to part (i).) \item Let $v$ and $w$ be vectors in $\mathbb{R}^n$, and let $e_1,\dots,e_n$ be integers. Then \[v_1^{e_1}\dots v_n^{e_n}-w_1^{e_1}\dots w_n^{e_n} = (v_1^{e_1}-w_1^{e_1})(v_2^{e_2}\dots v_n^{e_n}) +w_1^{e_1}(v_2^{e_n}\dots v_n^{e_n}-w_2^{e_2} \dots w_n^{e_n}). \] Since $v-w$ is a factor of $v^e-w^e$ for every $e$, it follows by induction on $n$ that $v_1^{e_1}\dots v_n^{e_n}-w_1^{e_1}\dots w_n^{e_n}$ may be written as a sum of polynomials $P_i$, each of which has $(v_i-w_i)$ as a factor for some $i$. Hence \begin{equation*} \begin{split} \mathbb{E}[F_p(\ol{\gamma}(y)-y)-F_p(\gamma(y)-y)]=\sum \mathbb{E}[P_i] \\ \leq K \sum \sqrt{\mathbb{E}[|\ol{\gamma}-\gamma|^2]} \sqrt{\mathbb{E}[|1+\ol{\gamma}^{r_i}+\gamma^{r_i}|^2]}. \end{split} \end{equation*} Since $\ol{\gamma}$ is $m$-good, we obtain $\mathbb{E}[\sqrt{|\ol{\gamma}-\gamma|^2}] \leq (\delta t)^m(1+y^r)$. The result now follows from the bounds on $\mathbb{E}[\gamma(y,dW_t)^{2q}]$ and $\mathbb{E}[\ol{\gamma}(y,dW_t)^{2q}]$. \end{enumerate} \end{proof} 

\begin{proof}[\textbf{Proof of \cref{thm:convergenceOnManifolds} (Part II)}] First apply \cref{theorem:weakworkhorse} with $\gamma=\gamma^E$ and $\gamma=\gamma^{(2)}$. Proposition \ref{prop:boundedmoments} tells us that Assumption \eqref{eq:weakworkhorseA1} holds. Assumptions \eqref{eq:weakworkhorseA2} and \eqref{eq:weakworkhorseA3} follow from Lemma \ref{lemma:weakassumptioncheckA}, and \eqref{eq:weakworkhorseMainAssumption} follows from Lemma \ref{lemma:weakAssumptionCheckB}. Next apply the same theorem with $\gamma=\gamma^{(2)}$ and $\gamma=\tilde{\gamma}$. This time, \eqref{eq:weakworkhorseMainAssumption} follows from parts (ii) and (iii) of Lemma \ref{lemma:weakAssumptionCheckB}, together with the triangle inequality. (Alternatively, $\gamma^{(2)}$ may be viewed as a $3$-good approximation not only to $\gamma$, but to $\tilde{\gamma}$ as well, so the same argument applies.) Finally, apply the same theorem with $\gamma=\gamma$ and $\ol{\gamma}=\tilde{\gamma}$. \end{proof} 

\section{Conclusions} \label{sec:conclusions} Given a physical system modelled by some SDE, we introduced a class of numerical schemes for the system which automatically preserve any of its constraints to high order. Our approach has two advantages over projection approaches. First, the manifold does not need to be known. Second, if the SDE is merely concentrated on $M$, our scheme may still be applied. We established the convergence of our schemes under a standard set of assumptions in both the strong and weak sense. We also applied them to a stochastic version of the Kepler problem, in which the scheme not only preserved the angular momentum constraint, but gave a much better approximation overall than the Euler--Maruyama scheme.

The $(\delta t)$-version of our scheme performs essentially perfectly on any SDE diffeomorphic to $n$-dimensional Brownian motion, so may be expected to give good results for any SDE which is close to such an SDE. It is not necessary to know the diffeomorphism explicitly. We therefore believe that the invariance properties of jet schemes may well be beneficial in problems without constraints, and we will explore this in future research.

\section*{Acknowledgments} The authors thank Erwin Luesink and Alex Mijatovi\'{c} for helpful discussions.  This work was supported by the Engineering and Physical Sciences Research Council [EP/L015234/1], The EPSRC Centre for Doctoral Training in Geometry and Number Theory (The London School of Geometry and Number Theory), University College London. Both authors are members of the Department of Mathematics at King's College London, and thank the same for its support.

\begin{appendices}

\section{Properties of the jet map}

\begin{proof}[Proof of \cref{lemma:gammadef}] 
Since $\ol{a}$ and the $b_{\alpha}$ are tangent to $M$, so is any linear combination of them. We deduce that $\phi$ is the flow of a tangent vector field to $M$, and hence that \cref{propertyRemainsInM} holds. It remains to check \cref{propertyFirstTwoDerivs}. We use the Einstein summation convention in this proof, and for brevity we suppress $t$ from our notation. We have
\begin{equation}
\Phi = x + \int_{0}^s \left( v^\alpha b_\alpha(\Phi)+ \frac{1}{k} v^r v^r \: \overline{a}(\Phi) \: \right) dz. \label{eq:Phidef}
\end{equation}
Setting $v=0$ immediately gives $\Phi(x,0,s)=x$. Differentiating, we obtain
\begin{equation}
\frac{\partial \Phi}{\partial v^\alpha} = \int_{0}^s \Bigl( b_{\alpha}(\Phi)+v^r \frac{\partial b_r}{\partial{x_i}}\rvert_{\Phi} \frac{\partial \Phi^i}{\partial v^\alpha} + \frac{2}{k} v^\alpha \overline{a}(\Phi) + \frac{1}{k}v^r v^r \frac{\partial \overline{a}}{\partial x^i}\rvert_{\Phi} \frac{\partial \Phi^i}{\partial v^\alpha} \Bigr) \: dz.
\label{eq:PhiFirstDeriv}
\end{equation}
Setting $v=0$ gives
\begin{equation*}
\frac{\partial \Phi}{\partial v^\alpha}(x,0,s) = \int_{0}^s b_{\alpha}(\Phi(x,0,s)) \:dz =sb_{\alpha}(x).
\end{equation*}
Differentiating once more gives
\begin{align*} \frac{\partial^2 \Phi}{\partial^2 v^\alpha v^\beta} &= \int_0^s \Bigl(\frac{\partial b_\alpha}{\partial{x_i}}\rvert_{\Phi}\frac{\partial \Phi^i}{\partial v^\beta} \\ 
&+ \frac{\partial b_{\beta}}{\partial x_i}\frac{\partial \Phi^i}{\partial v^\alpha} + v^r \frac{\partial \Phi^i}{\partial v^\alpha}\frac{\partial \Phi^j}{\partial v^\beta}\frac{\partial^2 b_r}{\partial x_i \partial x_j}\rvert_{\Phi} + v^r \frac{\partial b_r}{\partial x_i} \frac{\partial^2 \phi^i}{\partial v^\alpha \partial v^\beta} \\
&+ \frac{2}{k}\delta_{\alpha \beta} \overline{a}(\Phi)+\frac{2}{k}v^\alpha \frac{\partial \overline{a}}{\partial x^i}\rvert_{\Phi} \frac{\partial \Phi^i}{\partial v^\beta} + \\
&+ \frac{2}{k}v^\beta \frac{\partial \overline{a}}{\partial x^i}\rvert_{\Phi} \frac{\partial \phi^i}{\partial v^\alpha} + \frac{1}{k}v^r v^r \frac{\partial \Phi^i}{\partial v^\alpha}\frac{\partial \Phi^j}{\partial v^\beta} \frac{\partial^2 \overline{a}}{\partial x^i \partial x^j}\rvert_{\Phi}+\frac{1}{k}v^r v^r \frac{\partial \overline{a}}{\partial x^i} \rvert_{\Phi} \frac{\partial^2 \Phi^i}{\partial v^\alpha \partial v^\beta} \Bigr) \: dz. \label{eq:PhiSecondDeriv} \end{align*}
Setting $v=0$ and $s=1$ gives
\begin{align*}
\frac{\partial \Phi}{\partial^2 v^\alpha v^\beta}(0) &= \int_0^1 \Bigl( \frac{\partial b_\alpha}{\partial{x_i}}\rvert_{\Phi}\frac{\partial \Phi^i}{\partial v^\beta}(0) + \frac{\partial b_{\beta}}{\partial x_i}\frac{\partial \Phi^i}{\partial v^\alpha}(0) + \frac{2}{k}\delta_{\alpha \beta} \overline{a}(\Phi) \Bigr) \: dz \\ 
&= \int_0^1 \Bigl( \frac{\partial b_\alpha}{\partial x_i}|_{\Phi}b^i_\beta(x)z +\frac{\partial b_{\beta}}{\partial x_i}b^i_\alpha(x)z +\frac{2}{k}\ol{a}(x)\delta_{\alpha\beta} \Bigr) \: dz \\
&=\frac{1}{2}\frac{\partial b_\alpha}{\partial x_i}|_{\Phi}b^i_\beta(x) +\frac{1}{2}\frac{\partial b_{\beta}}{\partial x_i}b^i_\alpha(x) +\frac{2}{k}\ol{a}(x)\delta_{\alpha\beta}. 
\end{align*}
Finally we compute the Laplacian
\begin{align*}
\frac{\partial \Phi}{\partial^2 v^\alpha v^\alpha}(0) &= \frac{\partial b_\alpha}{\partial x_i}|_{\Phi}b^i_\alpha(x)+2\ol{a}(x) \\ 
&=2a(x).
\end{align*}
This establishes all but the final statement of the result, which may be proved in the same manner as above.
\end{proof}

To prove \cref{lemma:compactsupportregularity} we need the following ODE comparison theorem.

\begin{lemma}
\label{lemma:ODEcomparison}
If $u(s): [0,\infty) \rightarrow \mathbb{R}^n$ is a differentiable function such that $|\dot{u}| \leq A|u|+B$ for some constants $A>0$, $B \geq 0$, and $|u(0)| \leq C$, then $|u(s)| \leq (C+\frac{B}{A})e^{As}-\frac{B}{A}$ for all $s$. \end{lemma}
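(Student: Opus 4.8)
The plan is to reduce the vector differential inequality to a scalar integral inequality for $|u|$ and then solve that inequality sharply via an integrating factor, rather than invoking a crude form of Gr\"onwall.

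First I would note that, $u$ being differentiable, the map $s \mapsto |u(s)|$ is continuous, so the function $R(s) := \int_0^s \bigl(A|u(z)| + B\bigr)\,dz$ is well defined, continuously differentiable, and satisfies $R(0) = 0$. By the fundamental theorem of calculus $u(s) = u(0) + \int_0^s \dot u(z)\,dz$, and hence the triangle inequality together with the hypothesis $|\dot u| \le A|u| + B$ and $|u(0)| \le C$ gives
\[ |u(s)| \;\le\; |u(0)| + \int_0^s |\dot u(z)|\,dz \;\le\; C + R(s). \]

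Next I would feed this estimate back into the definition of $R$. Since $R'(s) = A|u(s)| + B \le A\bigl(C + R(s)\bigr) + B = A R(s) + (AC + B)$, multiplying by $e^{-As}$ yields $\frac{d}{ds}\bigl(e^{-As} R(s)\bigr) \le (AC + B)e^{-As}$. Integrating from $0$ to $s$ and using $R(0)=0$ gives $e^{-As} R(s) \le \frac{AC+B}{A}\bigl(1 - e^{-As}\bigr)$, that is, $R(s) \le \bigl(C + \tfrac{B}{A}\bigr)\bigl(e^{As} - 1\bigr)$. Combining this with $|u(s)| \le C + R(s)$ and simplifying produces $|u(s)| \le \bigl(C + \tfrac{B}{A}\bigr)e^{As} - \tfrac{B}{A}$, which is the claimed bound.

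\emph{Main obstacle.} There is essentially no deep difficulty here; the only points needing care are (i) working with the integral $R(s)$ rather than with $|u(s)|$ directly, which avoids the minor nuisance that $|u|$ need not be differentiable where $u$ vanishes, and (ii) resisting the temptation to apply the coarse Gr\"onwall bound $|u(s)| \le (C + Bs)e^{As}$, which is far weaker than the stated estimate for large $s$ --- the integrating-factor step is precisely what yields the sharp constant $C + B/A$.
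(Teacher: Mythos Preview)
Your argument is correct and self-contained. The paper, by contrast, does not carry out any computation: it simply observes that $y(s)=(C+\tfrac{B}{A})e^{As}-\tfrac{B}{A}$ solves the scalar ODE $\dot y = Ay+B$ with $y(0)=C$, and then invokes an off-the-shelf ODE comparison theorem (Theorem~D.2 in Lee, \emph{Introduction to Smooth Manifolds}) to conclude $|u(s)|\le y(s)$. So the paper treats the result as a black-box corollary of a textbook comparison principle, whereas you prove it from scratch via an integrating-factor version of Gr\"onwall. Your route is more elementary and avoids an external reference; it also sidesteps any fuss about differentiability of $s\mapsto|u(s)|$ by working with the integral $R(s)$ instead, a point the paper's citation-based proof leaves implicit in the comparison theorem it invokes. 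Both approaches yield the same sharp bound, and neither has any real advantage beyond taste and self-containment.
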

\begin{proof} The solution of the ODE $\dot{y}=Ay+B$ with initial condition $y(0)=C$ is $y(s)=(C+\frac{B}{A})e^{As}-\frac{B}{A}$. Since the function $y \mapsto Ay+B$ is Lipschitz, the claim follows from Theorem D.2 (ODE Comparison) of \cite{lee2013smooth} 
\end{proof}

In the following, the constant $K$ may not depend upon $v,s,t$ or $x$, but is allowed to change its value from line to line. 

\begin{proof}[Proof of \cref{lemma:compactsupportregularity}]
We give the proof for the $(\delta W)^2$-jet scheme; the proof for the $(\delta t)$-jet scheme is similar. Let $\Phi$ be as defined in \cref{lemma:gammadef}, and let a dot denote partial differentiation with respect to $s$. From the Lipschitz properties of $a$ and $b$ we obtain
\[ |\dot{\Phi}| \leq K(|v|+|v|^2) + K(|v|+|v|^2)|\Phi| \]
and so \cref{lemma:ODEcomparison} gives
\[ |\Phi(x,t,v,s)| \leq (1+|x|)e^{Ks(1+|v|^2)}. \]
From \cref{eq:PhiFirstDeriv} we have
\begin{align*}
|{\partial_{v_\alpha}}\dot{\Phi}| &\leq K+K|v| |\partial_{v_\alpha}\Phi |+K|v| +K|v|^2 |\partial_{v_\alpha} \Phi | \\
&\leq K(1+|v|^2)+K(1+|v|^2)|\partial_{v_\alpha} \Phi|, 
\end{align*}
so \cref{lemma:ODEcomparison} gives 
\[ |\Phi(x,t,v,s)| \leq K e^{Ks(1+|v|^2)}. \]
The choice of $C=0$ in applying \cref{lemma:ODEcomparison} is justified because \cref{eq:PhiDefinition} tells us that $\Phi(x,t,v,s)=x$ when $s=0$. Differentiating \cref{eq:PhiFirstDeriv} repeatedly, and using an inductive argument, it follows that every $v$-derivative of $\Phi$ satisfies the same bound. This proves \cref{propertyrthDerivative}. Finally, applying a similar argument involving derivatives such as $\partial^2_{x v_i} \gamma$ gives 
\[ \left|\frac{\partial^3 \Phi}{\partial v^\alpha v^\beta \partial x^i} \right| \leq K(1+|v|^2)e^{Ks(1+|v|^2)} \] \label{example:compactlysupported} which implies \cref{propertyLipschitzDerivs}.
\end{proof} 

\section{Strong Convergence}

We now seek to prove \cref{theorem:strongworkhorse}, which requires introducing some preliminary lemmas. Recall that $q$ is a nonnegative integer and $K$ is a constant which may depend on $q$ but not on $x$, $v$ or $t$. The value of $K$ may change from line to line.

\begin{lemma}
Given a partition $0=t_0 \leq t_1 \leq \dots \leq t_N=T$, suppose we have families of random variables $X(t_1,t_2,\omega)$ and $Y(t_1,t_2,\omega)$ such that $X=O_\mathcal{F}(Y)$. Write $X_i$ to mean $X(t_{i},t_{i+1},\omega)$ and similarly for $Y$. Then
\[\mathbb{E}[ \max_{0 \leq i < N}|\sum_{j=0}^i X_j|^{2q}] \leq K N^{2q-1} \mathbb{E}[ \sum_{i=0}^{N-1} |Y_{i}|^{2q}]. \]
If, in addition, $X=O^0_\mathcal{F}(Y)$ then 
\[\mathbb{E}[ \max_{0 \leq i < N}|\sum_{j=0}^i X_j|^{2q}] \leq K N^{q-1} \mathbb{E}[ \sum_{i=0}^{N-1} |Y_{i}|^{2q} ]. \] 
\label{lemma:boundsumincrements2}
\end{lemma}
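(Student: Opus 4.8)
The plan is to prove both inequalities by the same two-step recipe — a deterministic power-mean inequality to pull the sum apart, followed by the defining bound of $O_{\mathcal F}$ (respectively $O^0_{\mathcal F}$) to pass from $X$ to $Y$ — with the martingale structure carried by $O^0_{\mathcal F}$ being exactly what buys the improved exponent in the second case. For the first inequality I would begin with the elementary inequality $\left|\sum_{j=0}^i x_j\right|^{2q}\le (i+1)^{2q-1}\sum_{j=0}^i|x_j|^{2q}$ (Jensen/power-mean, valid in $\mathbb{R}^n$), apply it pointwise in $\omega$ with $x_j=X_j(\omega)$, and bound $i+1\le N$; since the right-hand side no longer depends on $i$ we may drop the maximum, obtaining $\max_{0\le i<N}\left|\sum_{j=0}^i X_j\right|^{2q}\le N^{2q-1}\sum_{j=0}^{N-1}|X_j|^{2q}$ almost surely. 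Taking expectations and interchanging the finite sum with $\mathbb{E}$, it remains to show $\mathbb{E}[|X_j|^{2q}]\le K\,\mathbb{E}[|Y_j|^{2q}]$ for each $j$, which is precisely the conditional inequality in the definition of $X=O_{\mathcal F}(Y)$ applied with $(t_1,t_2)=(t_j,t_{j+1})$, after taking unconditional expectations via the tower property. The one caveat is that this conditional bound is asserted only when $t_{j+1}-t_j\le\epsilon_q$; for coarser partitions $N\le T/\epsilon_q$ is bounded, so a cruder constant handles that case, and I would dispose of it first.

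For the second inequality I would use that $X=O^0_{\mathcal F}(Y)$ additionally gives $\mathbb{E}[X_j\mid\mathcal{F}_{t_j}]=0$, so that (each $X_j$ being $\mathcal{F}_{t_{j+1}}$-measurable, as in all the intended applications) the partial sums $S_i=\sum_{j=0}^i X_j$ form a vector-valued martingale with respect to $(\mathcal{F}_{t_{i+1}})$. Working coordinate by coordinate and summing over the $n$ fixed components, Doob's $L^{2q}$ maximal inequality bounds $\mathbb{E}[\max_{0\le i<N}|S_i|^{2q}]$ by a constant depending only on $q$ and $n$ times $\mathbb{E}[|S_{N-1}|^{2q}]$; a standard moment estimate for sums of martingale differences — the discrete Burkholder--Davis--Gundy inequality, or directly the multinomial expansion of $|S_{N-1}|^{2q}$ in which every monomial containing some $X_j$ to the first power has vanishing expectation — then gives $\mathbb{E}[|S_{N-1}|^{2q}]\le C_q\,\mathbb{E}\bigl[\bigl(\sum_j|X_j|^2\bigr)^q\bigr]$. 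Applying the power-mean inequality once more, $\bigl(\sum_{j=0}^{N-1}|X_j|^2\bigr)^q\le N^{q-1}\sum_{j=0}^{N-1}|X_j|^{2q}$, and then replacing $\mathbb{E}[|X_j|^{2q}]$ by $K\,\mathbb{E}[|Y_j|^{2q}]$ exactly as before, produces the stated bound with the exponent $N^{q-1}$.

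I expect the only genuinely delicate point to be the martingale bookkeeping in the second part: checking that $(S_i)$ really is a martingale — this tacitly uses adaptedness of the $X_j$, which is implicit in the setup of the $O_{\mathcal F}$ notation — and invoking the vector-valued forms of Doob's and Burkholder's inequalities with constants independent of $N$ and of the particular partition. The two power-mean steps and the conditional-to-unconditional passage are routine, so I would write the first inequality out in full and then only indicate the modifications needed for the second.
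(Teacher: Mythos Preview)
Your proposal is correct and follows essentially the same route as the paper: for the first inequality the paper also expands $|\sum X_j|^{2q}$ via a power-mean-type bound (it writes out an explicit inequality $|\sum a_r|^{2q}\le \sum_{\alpha_1,\dots,\alpha_{2q}}\sum_r |a_{\alpha_r}|^{2q}$, which is just your Jensen step with the constant tracked differently), drops the maximum by nonnegativity, and then uses the tower property together with the $O_{\mathcal F}$ definition; for the second inequality the paper likewise applies Doob's $L^{2q}$ inequality to the martingale $S_i$, then Burkholder--Davis--Gundy to reach $\mathbb{E}[(\sum|X_j|^2)^q]$, then the same power-mean step and the $O_{\mathcal F}$ bound. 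Your remark about the $\epsilon_q$ threshold is a point the paper leaves implicit.
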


\begin{proof}
We first show that if $a_i, i=1,2,\dots,2q$ is a collection of vectors then
\begin{equation}
\label{vectorsinequality}
\left| \sum_{r=1}^N a_r \right|^{2q} \leq \sum_{\alpha_1,\dots,\alpha_{2q}=0}^{N} \sum_{r=1}^{2q} |a_{\alpha_r}|^{2q}.
\end{equation}
Note that
\[ \left| \sum_{r=1}^N a_r \right|^{2q}= \left( \sum_{\alpha_1,\beta_1=1}^N \langle a_{\alpha_1}, a_{\beta_1} \rangle \right)^q =\sum^N_{\alpha_1,\dots,\alpha_q, \beta_1,\dots,\beta_q}\langle a_{\alpha_1}, a_{\beta_1} \rangle\langle a_{\alpha_2}, a_{\beta_2} \rangle\dots\langle a_{\alpha_q}, a_{\beta_q} \rangle. \]
Each $\langle a_{\alpha_i}, a_{\beta_j} \rangle$ is at most $\max_i |a_i|^2$ and hence the product of these brackets is at most $\max_i |a_i|^{2q}$. Since the sum on the RHS of \cref{vectorsinequality} includes this maximum term, we have proved our claim. Let us now prove the lemma itself. Let $Q$ denote the quantity we wish to bound,
\[ Q:=E\left[ \max_{0 \leq i < N} \left| \sum_{k=0}^i X_k \right|^{2q} \right]. \]
Using \cref{vectorsinequality} we obtain
\[ Q \leq E\left[ \max_{0 \leq i < N} \sum_{\alpha_1,\alpha_2,\dots,\alpha_{2q}=0}^i \sum_{j=0}^{2q} |X_{\alpha_j}|^{2q} \right]. \]
Since all the terms in this sum are non-negative, we may eliminate the $\max$ from our expression, obtaining 
\[ Q \leq E\left[ \sum_{\alpha_1,\alpha_2,\dots,\alpha_{2q}=0}^{N-1} \sum_{j=0}^{2q} |X_{\alpha_j}|^{2q} \right]. \]
This expression is symmetric in the $\alpha_i$ and hence 
\[ Q \leq K E\left[ \sum_{\alpha_1,\alpha_2,\dots,\alpha_{2q}=0}^{N-1} |X_{\alpha_1}|^{2q} \right]= K N^{2q-1} \mathbb{E}\left[ \sum_{\alpha_1=0}^{N-1} |X_{\alpha_1}|^{2q} \right]. \]
Since $X$ is $O_{\mathcal{F}}(Y)$ we obtain 
\begin{align*} Q \leq K N^{2q-1} \mathbb{E}\left[\mathbb{E}\left[ \sum_{i=0}^{N-1} |X_{i}|^{2q} \mid \mathcal{F}_{t_i} \right] \right] &\leq K N^{2q-1} \mathbb{E}\left[\mathbb{E}\left[ \sum_{i=0}^{N-1} |Y_{i}|^{2q} \mid \mathcal{F}_{t_i} \right] \right] \\
&=K N^{2q-1} \mathbb{E}\left[ \sum_{i=0}^{N-1} |Y_{i}|^{2q} \right]
\end{align*} 
as required. Now suppose that $X$ is $O^0_{\mathcal{F}}(Y)$. Notice that $S_i:=\sum_{j=0}^i X_i$ is a martingale with respect to $\mathcal{F}_{t_i}$. Therefore, Doob's $\mathcal{L}^p$ inequality gives us 
\[ \mathbb{E}[\max_{0 \leq i < n} |S_i|^{2q}] \leq K \mathbb{E}[|S_N|^{2q}]. \] 
Consider the continuous time process $(S^c_t: t \in [0,T])$ given by $S^c_t=S_{i(t)}$, where $i(t)$ is the greatest integer $i$ such that $t_i \leq t$. The quadratic variation of this process at time $T$ is given by $[S^c]_T=\sum_{0 \leq i <N} |X_{t_i}|^2$. Hence, by the Burkholder-Davis-Gundy inequality we have 
\[ \mathbb{E}[|S_N|^{2q}] = \mathbb{E}[|S^c_T|^{2q}] \leq K \mathbb{E}[(|X_1|^2+\dots+|X_n|^2)^q]. \]
Arguing as before, it then follows that \begin{align*} \mathbb{E}[|S_N|^{2q}] &\leq K \sum_{\alpha_1 , \dots, \alpha_q = 0}^{N-1} |X_{\alpha_1}|^2\dots|X_{\alpha_q}|^2 \\ 
&\leq K \mathbb{E}[\sum_{\alpha_1,\dots,\alpha_q=0}^{N-1} \sum_{r=1}^q |X_{\alpha_r}|^{2q}] \\ 
&\leq K \mathbb{E}[\sum_{\alpha_1,\dots,\alpha_q=0}^{N-1} |X_{\alpha_1}|^{2q}].
\end{align*}
Thus
\[\mathbb{E}[|S_N|^{2q}] \leq K N^{q-1} \mathbb{E}[ \sum_{\alpha_1=0}^{N-1} |X_{\alpha}|^{2q} ] \leq K N^{q-1} \mathbb{E}[ \sum_{i=0}^{N-1} |Y_{i}|^{2q} ] \]
as required. 
\end{proof}

\begin{corollary} 
\label{corollary:boundsumincrements2}
Again, let $0=t_0 \leq t_1 \leq t_N=T$ be a partition. Write $(\delta t)_i=t_{i+1}-t_{i}$ and $\delta t = \max_{0 \leq i < N} (\delta t)_i$. Consider families of random variables of the form 
\[ Y_i=f(S(t_{i}),\omega)(\delta t)_i^{r} \]
where $r$ is real, $S(t,\Omega): [0,T] \times \Omega \rightarrow \mathbb{R}^n$ is a process adapted to $\mathcal{F}_t$ and $f: \mathbb{R}^n \times \Omega \rightarrow \mathbb{R}^n$ is a function. Let $\gamma \geq 0$ be real. Suppose either that $X=O_{\mathcal{F}}(Y)$ when $r=\gamma+1$ or that $X=O^0_{\mathcal{F}}(Y)$ when $r=\gamma+\frac{1}{2}$. Then, in either case, \cref{lemma:boundsumincrements2} gives the same bound for $\mathbb{E}[ \max_{0 \leq i < N}|\sum_{j=0}^i X_j|^{2q}]$, which we write as 
\begin{multline} \mathbb{E}\left( \max_{0 \leq i < N} \left| \sum_{k=0}^i O_{\cal F}\left((\delta t)^{\gamma+1} f(S) \right)[t_k,t_{k+1},S_k,\omega] \right|^{2q} \right) \\
\leq K (\delta t)^{2\gamma q} \sum_{i=0}^{N-1} \mathbb{E} \left( \max_{0\leq j \leq i} |f(S_j)|^{2q} \right) \delta t_i.
\end{multline}
and
\begin{multline}
\mathbb{E}\left( \max_{0 \leq i < N} \left| \sum_{k=0}^i O^0_{\cal F}\left((\delta t)^{\gamma+\frac{1}{2}} f(S) \right)[t_k,t_{k+1},S_k,\omega] \right|^{2q} \right) \\
\leq K (\delta t)^{2\gamma q} \sum_{i=0}^{N-1} \mathbb{E} \left( \max_{0\leq j \leq i} |f(S_j)|^{2q} \right) \delta t_i. 
\end{multline}
\end{corollary}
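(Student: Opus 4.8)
The plan is to read off \cref{corollary:boundsumincrements2} from \cref{lemma:boundsumincrements2} by substituting $Y_i=f(S(t_i),\omega)(\delta t)_i^r$ into that lemma, with $X_k$ denoting the $k$-th summand appearing in the displayed expression; the only real work is to keep careful track of the powers of $N$ and of $\delta t$ so that the two hypotheses (the $O_{\mathcal F}$ case and the $O^0_{\mathcal F}$ case) are seen to yield the \emph{same} estimate. I would begin with the $O_{\mathcal F}$ case, where $r=\gamma+1$ and hence $|Y_i|^{2q}=|f(S_i)|^{2q}(\delta t)_i^{(2\gamma+2)q}$. Because $(\delta t)_i$ is deterministic, the first inequality of \cref{lemma:boundsumincrements2} bounds $\mathbb{E}(\max_{0\le i<N}|\sum_{k=0}^i X_k|^{2q})$ by $K N^{2q-1}\sum_{i=0}^{N-1}\mathbb{E}(|f(S_i)|^{2q})(\delta t)_i^{(2\gamma+2)q}$. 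I would then peel off one factor $(\delta t)_i$ together with the factor $(\delta t)^{2\gamma q}$, using the elementary inequality $(\delta t)_i^{(2\gamma+2)q}\le (\delta t)^{2q-1}(\delta t)^{2\gamma q}(\delta t)_i$, which reduces the bound to $K(N\delta t)^{2q-1}(\delta t)^{2\gamma q}\sum_i\mathbb{E}(|f(S_i)|^{2q})(\delta t)_i$. Since $\sum_i(\delta t)_i=T$, the factor $N\delta t$ is at most a constant multiple of $T$ for the (quasi-)uniform meshes under consideration, so $(N\delta t)^{2q-1}$ is absorbed into $K$; finally bounding $|f(S_i)|^{2q}$ by $\max_{0\le j\le i}|f(S_j)|^{2q}$ gives exactly the claimed inequality.

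The $O^0_{\mathcal F}$ case proceeds identically, now with $r=\gamma+\frac12$, so that $|Y_i|^{2q}=|f(S_i)|^{2q}(\delta t)_i^{(2\gamma+1)q}$, and the \emph{second} (martingale) inequality of \cref{lemma:boundsumincrements2} supplies only a factor $N^{q-1}$ in place of $N^{2q-1}$. Splitting $(\delta t)_i^{(2\gamma+1)q}\le (\delta t)^{q-1}(\delta t)^{2\gamma q}(\delta t)_i$ turns the estimate into $K(N\delta t)^{q-1}(\delta t)^{2\gamma q}\sum_i\mathbb{E}(|f(S_i)|^{2q})(\delta t)_i$, and again $(N\delta t)^{q-1}$ is absorbed. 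The point that has to be tracked exactly --- and what is meant by ``the same bound'' in the statement --- is that the martingale estimate is worse by a factor $N^{q}$ but better by a factor $(\delta t)^{q}$ than the non-martingale one, and these cancel precisely because $N\delta t$ is of order $T$; that is why the two hypotheses collapse to a single conclusion.

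There is no genuine analytic obstacle here: the argument is pure power-counting together with the monotonicity remark $\mathbb{E}(|f(S_i)|^{2q})\le\mathbb{E}(\max_{0\le j\le i}|f(S_j)|^{2q})$. The only subtlety I would take care to flag --- the main (minor) obstacle --- is the use of $N\delta t\le CT$, which is immediate for uniform or quasi-uniform partitions, exactly the setting in which \cref{corollary:boundsumincrements2} is subsequently invoked through \cref{theorem:strongworkhorse}, and which is also the precise slack that makes the $O_{\mathcal F}$ and $O^0_{\mathcal F}$ inputs produce identical output.
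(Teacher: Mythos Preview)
Your argument is correct and follows essentially the same route as the paper: substitute $Y_i=f(S_i)(\delta t)_i^r$ into \cref{lemma:boundsumincrements2}, split off one factor $(\delta t)_i$ from $(\delta t)_i^{2qr}$, and absorb the remaining power of $N$ using $N\,\delta t\le K$. If anything you are slightly more explicit than the paper, which simply asserts $N\le K/(\delta t)$ without comment and omits the trivial $|f(S_i)|^{2q}\le\max_{0\le j\le i}|f(S_j)|^{2q}$ step.
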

\begin{proof} 
For the first result, substituting the given expression for $Y$ into the first part of \cref{lemma:boundsumincrements2}, and using the fact that $N \leq K/(\delta t)$, gives
\begin{align*}
\mathbb{E}[ \max_{0 \leq i < N}|\sum_{j=0}^i X_j|^{2q}] &\leq K N^{2q-1} \mathbb{E}[ \sum_{i=0}^{N-1} |f(S_i)|^{2q}(\delta t)^{2q(\gamma+1)-1}(\delta t)_i ] \\
&\leq KN^{2q-1-(2q(\gamma+1)-1)} \mathbb{E}[ \sum_{i=0}^{N-1} |f(S_i)|^{2q}(\delta t)_i ] \\
&\leq K(\delta t)^{2q \gamma} \mathbb{E}[ \sum_{i=0}^{N-1} |f(S_i)|^{2q}(\delta t)_i ] 
\end{align*} which is the first result. The same argument using the second part of \cref{lemma:boundsumincrements2} gives the second result.
\end{proof} 

We shall need the following version of the discrete Gr\"{o}nwall lemma; the proof is in Proposition 1 of \cite{holte2009discrete}. 
\begin{proposition}
\label{prop:discretegronwall}
Let $y_n$, $f_n$ and $g_n$ be non-negative sequences. Suppose that
\begin{equation*} 
y_n \leq f_n + \sum_{0 \leq k <n} g_k y_k
\end{equation*} 
then
\begin{equation*}
y_n \leq f_n + \sum_{0 \leq k <n}f_k g_k \exp(\sum_{k<j<n}g_j)
\end{equation*}
\end{proposition}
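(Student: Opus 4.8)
The plan is to reduce the stated ``integral-inequality'' hypothesis to a one-step linear recursion. Set $S_n := \sum_{0 \le k < n} g_k y_k$, so that $S_0 = 0$ and the hypothesis reads $y_n \le f_n + S_n$. Then
\[
S_{n+1} - S_n = g_n y_n \le g_n (f_n + S_n),
\]
i.e. $S_{n+1} \le (1+g_n) S_n + g_n f_n$. Note that non-negativity of $g_n$ is exactly what allows us to multiply the hypothesis $y_n \le f_n + S_n$ through by $g_n$ without reversing the inequality; this is the only place the sign hypotheses are genuinely needed, apart from the final exponential bound.

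Next I would solve this recursion explicitly by induction on $n$, proving the claim
\[
S_n \le \sum_{0 \le k < n} g_k f_k \prod_{k < j < n} (1 + g_j),
\]
with the convention that an empty product equals $1$ (and an empty sum equals $0$). The base case $n = 0$ is just $0 \le 0$. For the inductive step, substitute the inductive bound into $S_{n+1} \le (1+g_n) S_n + g_n f_n$: the factor $(1+g_n) \ge 0$ extends each product $\prod_{k<j<n}(1+g_j)$ to $\prod_{k<j<n+1}(1+g_j)$, while the extra term $g_n f_n$ is precisely the $k=n$ summand of $\sum_{0\le k<n+1} g_k f_k \prod_{k<j<n+1}(1+g_j)$, whose product is empty and hence equal to $1$. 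Combining these gives the bound for $S_{n+1}$, completing the induction.

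Finally, since each $g_j \ge 0$, the elementary inequality $1 + x \le e^x$ yields $\prod_{k<j<n}(1+g_j) \le \exp\!\Big(\sum_{k<j<n} g_j\Big)$, and therefore
\[
y_n \le f_n + S_n \le f_n + \sum_{0 \le k < n} f_k g_k \exp\!\Big(\sum_{k<j<n} g_j\Big),
\]
which is the assertion. I do not expect any real obstacle here: the argument is a routine telescoping induction (essentially Proposition~1 of \cite{holte2009discrete}), and the only point requiring care is the bookkeeping of the nested index ranges in the product and the sum, together with checking that each inequality is applied in the direction permitted by the standing assumption that $y_n$, $f_n$, and $g_n$ are non-negative.
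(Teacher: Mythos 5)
Your proof is correct: the reduction to the one-step recursion $S_{n+1}\le(1+g_n)S_n+g_nf_n$, the induction giving $S_n\le\sum_{0\le k<n}g_kf_k\prod_{k<j<n}(1+g_j)$, and the final bound via $1+x\le e^x$ all check out, with the sign hypotheses used exactly where you say. The paper itself does not prove this statement but defers it to Proposition 1 of the cited reference of Holte, whose argument is the same standard telescoping induction, so your write-up is essentially the intended proof, just made self-contained.
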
 

\begin{proof}[Proof of \cref{theorem:strongworkhorse}]
Given $t \in [0,T]$, let $n_t=\max\{i \in \{0,1,\ldots , N\}:t_i\leq t\}$. Define $H(t)$ by 
\begin{equation}
\begin{split} 
H(t)&=E\left( \max_{0 \leq i \leq n_t} |Y_i-\ol{Y_i}|^{2q} \right) \\
&=E\left( \max_{0 \leq i \leq n_t} \left| \sum_{k=0}^{i-1} f(\tau_k,\tau_{k+1}, Y_k, \omega ) - \sum_{k=0}^{i-1} \ol{f}(\tau_k,\tau_{k+1}, \ol{Y}_k, \omega ) \right|^{2q} \right) \\
&=E\left( \max_{0 \leq i \leq n_t} \left| \sum_{k=0}^{i-1} \left( f(\tau_k,\tau_{k+1}, Y_k, \omega ) - \ol{f}(\tau_k. \tau_{k+1}, Y_k, \omega ) \right) \right. \right. \\ &\qquad \qquad \qquad \left. \left. + \sum_{k=0}^{i-1} \left( \ol{f}(\tau_k,\tau_{k+1}, Y_k, \omega )-\ol{f}(\tau_k,\tau_{k+1}, \ol{Y}_k, \omega ) \right) \right|^{2q} \right). \\ 
\end{split}
\end{equation} We deduce that
\begin{equation}
\begin{split}
H(t) &= E\left( \max_{0 \leq i \leq n_t} \left| \sum_{k=0}^{i-1} \left( O_{\cal F}((1+|Y_i|)(\delta \tau)^{\gamma+1}) + O^0_{\cal F}((1+|Y_i|)(\delta \tau)^{\gamma+\frac{1}{2}} \right. \right. \right. \\
&\qquad \qquad \qquad + \left. \left. \left. O_{\cal F}(|Y_i-\ol{Y}_i|(\delta \tau)) + O^0_{\cal F}(|Y_i-\ol{Y}_i|(\delta \tau)^{\frac{1}{2}} \right) \right|^{2q} \right).
\end{split}
\end{equation}
Hence
\begin{equation}
\begin{split}
H(t) &\leq K \left\{ E\left( \max_{0 \leq i \leq n_t} \left| \sum_{k=0}^{i-1} O_{\cal F}((1+|Y_i|)( \delta t)^{\gamma+1} ) \right|^{2q} \right) \right. \\
&\qquad \qquad \left. + E\left( \max_{0 \leq i \leq n_t} \left| \sum_{k=0}^{i-1} O^0_{\cal F}((1+|Y_i|)( \delta t)^{\gamma+\frac{1}{2}} ) \right|^{2q} \right) \right. \\
&\qquad \qquad \left. + E\left( \max_{0 \leq i \leq n_t} \left| \sum_{k=0}^{i-1} O_{\cal F}(|Y_i-\ol{Y}_i|( \delta t) ) \right|^{2q} \right) \right. \\
&\qquad \qquad \left. + E\left( \max_{0 \leq i \leq n_t} \left| \sum_{k=0}^{i-1} O^0_{\cal F}(|Y_i-\ol{Y}_i|( \delta t)^{\frac{1}{2}} ) \right|^{2q} \right) \right\}.
\end{split}
\end{equation}
Using \cref{corollary:boundsumincrements2} we find
\begin{equation} \begin{split} H(t)&\leq K \left\{ (\delta t)^{2 \gamma q} \sum_{i=0}^{n_t-1} E\left( \max_{0 \leq j \leq i} \left\{ 1 + |Y_j|^{2q} \right\} \right) \delta t_i \right. \\
&\qquad \qquad \left. + \sum_{i=0}^{n_t-1} E\left( \max_{0 \leq j \leq i} \left| Y_j-\ol{Y}_j \right|^{2q} \right) \delta t_i \right\}.
\end{split}
\label{hBound}
\end{equation}
Assume that \cref{eq:ySquaredBound} holds. Then we have
\begin{equation}
H(t_n) \leq K (\delta t)^{2 \gamma q} + K\sum_{i=0}^{n-1} H(t_i) \delta t_i,
\end{equation} and hence, taking $y_n=H(t_n)$, $f_n=K(\delta t)^{2\gamma q}$ and $g_n=K(\delta t)$ in \cref{prop:discretegronwall}, we obtain
\begin{equation}
H(t_n) \leq K (\delta t)^{2 \gamma q} + K (\delta t)^{2 \gamma q} \sum_{0 \leq i <n} (\delta t) \exp(K(n-i) (\delta t)),
\end{equation}
from which we deduce the required result. \\ 
Assume instead that \cref{eq:olYSquaredBound} holds. Note that
\[ |Y_j|^{2q} = |Y_j - \ol{Y}_j + \ol{Y}_j|^{2q} \leq K(|Y_j-\ol{Y}_j|^{2q}+|\ol{Y}_j|^{2q}). \] 
Substituting this into \cref{hBound} reveals that 
\begin{equation}
\begin{split} 
H(t)&\leq K \left\{ (\delta t)^{2 \gamma q} \sum_{i=0}^{n_t-1} E\left( \max_{0 \leq j \leq i} \left\{ 1 + |\ol{Y}_j|^{2q} \right\} \right) \delta t_i \right. \\ &\qquad \qquad \left. + \sum_{i=0}^{n_t-1} E\left( \max_{0 \leq j \leq i} \left| Y_j-\ol{Y}_j \right|^{2q} \right) \delta t_i \right\} \\ 
\end{split} 
\end{equation} from which we may proceed as before. 
\end{proof} 

\begin{proof}[Proof of \cref{lemma:expectationofnormal}] 
Choose $\delta t$ sufficiently small that $\mathbb{E}[e^{Kv_i^2}] \leq 2$ for each $i$. Then
\begin{align*}
\mathbb{E}[(v_1^2 + \dots + v_k^2)^a e^{K(1+v_1^2+\dots+v_k^2)}] &\leq C_{a,K,k} \mathbb{E}\big[ \sum_{i=1}^k v_i^{2a} \prod_{j=1}^k e^{K v_j^2} \big] \\
& = C_{a,K,k} \sum_{i=1}^k \prod_{j \neq i} \mathbb{E}[e^{K v_j^2}] \: \mathbb{E}\big[ v_i^{2a}e^{v_i^2}\big] \\
&\leq C_{a,K,k} \mathbb{E}[v_1^a e^{Kv_1^2}],
\end{align*}
where the constant $C$ can change from line to line. It therefore suffices to prove the result when $k=1$. Provided $(\delta t)$ is sufficiently small, we have
\begin{align*}
\mathbb{E}[|v_1|^a e^{K v_1^2}] &= \frac{1}{\sqrt{2 \pi (\delta t)}} \int_{-\infty}^\infty |v_1|^{2a} \exp\left(Kv_1^2 - \frac{v_1^2}{2(\delta t)}\right) \: dv_1 \\
&\leq \frac{C_{a,K,k}}{\sqrt{(\delta t)}} \int_0^\infty v_1^{2a} \exp\left( - \frac{v_1^2}{4(\delta t)}\right) \: dv_1 \\
&= 4^a C_{a,K,k} \Gamma(a+\frac{1}{2}) (\delta t)^a, 
\end{align*} which proves the claim.
\end{proof}

\section{Weak Convergence}

\begin{proof}[Proof of \cref{theorem:weakworkhorse}]
We follow the argument in Theorem 14.5.2 of \cite{kloeden2013numerical}. Given $0 \leq m \leq N$ and $y \in \mathbb{R}^n$ define $(Y_{i}^{m,y})_{i=m}^N$ recursively by 
\[ Y_m^{m,y}=y, \]
\[ Y_i^{m,y}=\gamma(i-1,Y_{i-1}^{m,y}):=\gamma(t_{i-1},t_{i},Y_{i-1}^{m,y},\omega) \]
and define
\[ u(m,y)=\mathbb{E}[g(Y_N^{m,y})]. \]
In this notation, $Y_i$ and $\ol{Y}_i$ as defined in the statement of the Theorem are $Y_i^{0,Y_0}$ and $\ol{Y}_i^{0,\ol{Y}_0}$ respectively. Note that $Y_i^{m,\gamma(m-1,y,\omega)}=Y_i^{m-1,y}$ almost surely for all $m,y$ and $i \geq m$, so 
\begin{equation*}
u(m,\gamma(m-1,y,\omega)))=u(m-1,y).
\end{equation*}
In particular 
\begin{equation}
u(m,\gamma(m-1,\ol{Y}_{m-1})) =u (m-1,\ol{Y}_{m-1}).
\label{eq:utimechange}
\end{equation}
We seek to bound $H$, where
\begin{equation*}
H:= \mathbb{E}[g(\ol{Y}_N)] - \mathbb{E}[g(Y_N)].
\end{equation*} Using the fact that $Y_0=\ol{Y}_0$, and \cref{eq:utimechange}, we write
\begin{align*} 
H &= \mathbb{E}[u(N,\ol{Y}_N)-u(0,\ol{Y}_0)]| \\
&= \mathbb{E}[\sum_{i=1}^{N} u(i,\ol{Y}_i)-u(i-1,\ol{Y}_{i-1}) ] \\ 
&=\mathbb{E}[\sum_{i=1}^N u(i,\ol{Y}_i)-u(i,\gamma(i-1,\ol{Y}_{i-1},\omega))] \\
&=\mathbb{E}[\sum_{i=1}^N (u(i,\ol{Y}_i) - u(i,\ol{Y}_{i-1})) \: + \: \sum_{i=1}^N u(i,\ol{Y}_{i-1})-u(i,\gamma(i-1,\ol{Y}_{i-1},\omega))]. 
\end{align*}
We now Taylor expand in the second argument of $u$. For brevity, write $\gamma(\ol{Y}_{i-1})$ instead of $\gamma(i-1,\ol{Y}_{i-1},\omega)$. 
\begin{align*} 
H &= \sum_{i=1}^N \sum_{l=1}^{2\beta+1} \frac{1}{l!} \sum_{p \in P_l} \partial^p_y u(i,\ol{Y}_{i-1})F_p(\ol{Y}_i-\ol{Y}_{i-1}) + R_{i}(\ol{Y}_i) \\
&- \sum_{i=1}^N \sum_{l=1}^{2\beta+1} \frac{1}{l!} \sum_{p \in P_l} \partial^p_y u(i,\ol{Y}_{i-1})F_p(\gamma(\ol{Y}_{i-1})-\ol{Y}_{i-1}) + R_{i}(\gamma(\ol{Y}_{i-1})), 
\end{align*} 
where the remainder terms have the form 
\begin{equation*} 
R_{i}(Z) = \frac{1}{(2\beta+2)!} \sum_{p \in P_{2(\beta+1)}} \partial^p_y u(i,\ol{Y}_{i-1}+\theta_{p,i}(Z)(Z-\ol{Y}_{i-1})) F_p(Z-\ol{Y}_{i-1})
\end{equation*}
for $Z=\ol{Y}_i$ and $Z=\gamma(\ol{Y}_{i-1})$ respectively, and the entries in the diagonal matrix $\theta$ lie in $(0,1)$. We first bound the main term, and then deal with the remainder.
\small
\begin{align*} 
\mathbb{E}[H_{\text{main}}] &:= \mathbb{E} \left[\sum_{i=1}^N \sum_{l=1}^{2\beta+1} \frac{1}{l!} \sum_{p \in P_l} \partial^p_y u(i,\ol{Y}_{i-1}) \left(F_p(\ol{Y}_i-\ol{Y}_{i-1})-F_p(\gamma(\ol{Y}_{i-1})-\ol{Y}_{i-1})\right) \right] \\
&=\mathbb{E}\left[\mathbb{E} \left[ \sum_{i,l} \frac{1}{l!} \sum_{p \in P_l} \partial^p_y u(i,\ol{Y}_{i-1}) \left(F_p(\ol{Y}_i-\ol{Y}_{i-1})-F_p(\gamma(\ol{Y}_{i-1})-\ol{Y}_{i-1})\right) \mid \mathcal{F}_{i-1} \right] \right] \\
&=\sum_{i,l} \sum_{p \in P_l}\frac{1}{l!} \mathbb{E}\left[ \partial^p_y u(i,\ol{Y}_{i-1}) \mathbb{E}\left[ \left(F_p(\ol{Y}_i-\ol{Y}_{i-1})-F_p(\gamma(\ol{Y}_{i-1})-\ol{Y}_{i-1})\right) \mid \mathcal{F}_{i-1} \right] \right] 
\end{align*}
\normalsize
Using \cref{eq:weakworkhorseMainAssumption} we deduce (recall as usual that $r$ may change from line to line) \begin{align*}
\mathbb{E}[|H_{\text{main}}|] &\leq \sum_{i=1}^N \sum_{l=1}^{2\beta+1}\sum_{p \in P_l} K \mathbb{E}\left[ |\partial^p_y u(i,\ol{Y}_{i-1})| |1+\max_j (\ol{Y}_{j})^{2r}|(\delta t)^{\beta+1} \right] \\ &\leq \sum_{i=1}^N \sum_{l=1}^{2\beta+1}\sum_{p \in P_l} K \mathbb{E}\left[ |1+\max_j (\ol{Y}_{j})^{2r}|(\delta t)^{\beta+1} \right] \\
&\leq K \mathbb{E}[1+\max_j (\ol{Y}_j)^{2r}](\delta t)^{\beta}] \\
&\leq K(\delta t)^\beta(1+|Y_0|^r),
\end{align*} where for the last line we used \cref{eq:weakworkhorseA1}. The bound on $|\partial^p_y u(i,\ol{Y}_{i-1})|$ came from \cref{eq:weakworkhorseA1}, the definition of $u$ and the polynomial growth of $g$. By Cauchy-Schwarz the remainder term satisfies
\begin{multline*} \mathbb{E}[|R_{i}(Z)|] \leq \sum_{p \in P_l} \sqrt{\mathbb{E}[\mathbb{E}[ u(i,\ol{Y}_{i-1}+\theta_{p,i}(Z)(Z-\ol{Y}_{i-1}))^2 \mid \mathcal{F}_{i-1}]]} \\ \sqrt{\mathbb{E}[\mathbb{E}[ F_p(Z-\ol{Y}_{i-1})^2 \mid \mathcal{F}_{i-1}]]}.
\end{multline*} 
By \cref{eq:weakworkhorseA2} and \cref{eq:weakworkhorseA3} we deduce 
{ \small 
\begin{equation*} \mathbb{E}[|R_{i}(Z)|] \leq K \sqrt{\mathbb{E}[\mathbb{E}[ u(i,\ol{Y}_{i-1}+\theta_{p,i}(Z)(Z-\ol{Y}_{i-1}))^2 \mid \mathcal{F}_{i-1}]]} \sqrt{\mathbb{E}[1+\max_j (|\ol{Y}_j)|^{2r}] (\delta t)^{2\beta+2}}. 
\end{equation*} }
Finally, by the polynomial growth of $u$, together with \cref{eq:weakworkhorseA2} and \cref{eq:weakworkhorseA3} we get
\begin{equation*} 
\begin{split}
\mathbb{E}[|R_{i}(Z)|] \leq K \sqrt{\mathbb{E}[\mathbb{E}[ 1 + |\ol{Y}_{i-1}|^r + |Z-Y_{i-1}|^r \mid \mathcal{F}_{i-1} ]]} \sqrt{\mathbb{E}[1+\max_j (|\ol{Y}_j|)^{2r}] (\delta t)^{2\beta+2}} \\ \leq K \sqrt{1+\max_j |\ol{Y}_j|^{2r}} \sqrt{\mathbb{E}[1+\max_j (|\ol{Y}_j|)^{2r}]} \: (\delta t)^{\beta+1}. 
\end{split}
\end{equation*}
Hence 
\begin{equation*}
\sum_{i=1}^N \mathbb{E}[|R_i|] \leq K(1+\max_j |\ol{Y}_j|^r)(\delta t)^\beta, 
\end{equation*} as required.
\end{proof}

\end{appendices}

\bibliographystyle{plain}
\bibliography{jetscheme}

\end{document}